\newtheorem{theorem}{Theorem}[section]
\newtheorem{proposition}[theorem]{Proposition}
\newtheorem{lemma}[theorem]{Lemma}
\newtheorem{corollary}[theorem]{Corollary}
\theoremstyle{definition}
\newtheorem{remark}[theorem]{Remark}
\newtheorem{notation}[theorem]{Notation}
\newcommand {\Z} {\mathbb{Z}}
\newcommand {\R} {\mathbb{R}}
\newcommand {\E} {\mathbb{E}}
\newcommand {\Var} {{\rm Var}}
\newcommand {\N} {\mathbb {N}}
\title[Distribution of zeros]{The distribution of the zeroes of random trigonometric polynomials}
\author{Andrew Granville and Igor Wigman}
\address{D\'epartement de math\'ematiques et de statistiques,
Universit\'e de Montr\'eal C.P. 6128, succ. centre-ville Montr\'eal,
Qu\'ebec H3C 3J7, Canada.
 } \email{andrew@dms.umontreal.ca}
\address{Centre de recherches math\'ematiques (CRM),
Universit\'e de Montr\'eal C.P. 6128, succ. centre-ville Montr\'eal,
Qu\'ebec H3C 3J7, Canada \newline
currently at \newline
Institutionen f\"{o}r Matematik, Kungliga Tekniska h\"{o}gskolan (KTH),
Lindstedtsv\"{a}gen 25, 10044 Stockholm, Sweden} \email{wigman@kth.se}
\thanks{I.W. is supported by a CRM ISM fellowship, Montr\'eal and
the Knut and Alice Wallenberg Foundation, grant KAW.2005.0098, Stockholm}
\date{}
\begin{document}
\maketitle

\begin{abstract}
We study the asymptotic distribution of the number $Z_{N}$ of zeros
of random trigonometric polynomials of degree $N$ as
$N\rightarrow\infty$. It is known that as $N$ grows to infinity, the
expected number of the zeros is asymptotic to
$\frac{2}{\sqrt{3}}\cdot N$. The asymptotic form of the variance was
predicted by Bogomolny, Bohigas and Leboeuf to be $cN$ for some
$c>0$. We prove that $\frac{Z_{N}-\E Z_{N}}{\sqrt{cN}}$ converges to
the standard Gaussian. In addition, we find that the analogous
result is applicable for the number of zeros in short intervals.
\end{abstract}

\section{Introduction}

The distribution of zeros of random functions for various ensembles
is one of the most studied problems. Of the most significant and
important among those is the ensemble of random trigonometric
polynomials, as the distribution of it zeros occurs in a wide range
of problems in science and engineering, such as nuclear physics (in
particular, random matrix theory), statistical mechanics, quantum
mechanics, theory of noise etc.

\subsection{Background}

Understanding the distribution of zeros of random functions was
first pursued by Littlewood and Offord ~\cite{LO1}, ~\cite{LO2} and
~\cite{LO3}. They considered, in particular, the distribution of the
number of real roots of polynomials
\begin{equation}
\label{eq:rand pol} P_N(x) = a_0+a_1x+\ldots+a_N x^N,
\end{equation}
of degree $N$ with random coefficients $a_n$, as
$N\rightarrow\infty$. For the coefficients $a_n$ taking each of the
values $1,-1$ with equal probability $1/2$, they showed that the
number $Z_{P_{N}}$ of zeros of $P_{N}(x)$ satisfies
\begin{equation}
\label{eq:rand pol exp} Z_{P_{N}} \sim \frac{2}{\pi}\ln{N}
\end{equation}
for $\big(1-o_{N\rightarrow\infty}(1)\big)2^{N}$ of the vectors
$\vec{a}\in \{\pm 1\}^{N}$. Later, Erdos and Offord ~\cite{EO}
refined their estimate.

Kac ~\cite{K} proved that the expected number of zeros $Z_{P_{N}}$
of the random polynomials \eqref{eq:rand pol} of degree $N$, this
time $a_n$ being Gaussian i.i.d. with mean $0$ and variance $1$, is
asymptotic to the same expression \eqref{eq:rand pol exp}. His
result was generalized by Ibragimov and Maslova ~\cite{IM1} and
~\cite{IM2}, who treated any distributions of the coefficients
$a_n$, provided that they belong to the domain of attraction of the
normal law: if each $\E a_n = 0$ then the expectation is again
asymptotic to \eqref{eq:rand pol exp}, though, if $\E a_n \ne 0$ one
expects half as many zeros as in the previous case, that is
\begin{equation*}
\E Z_{P_{N}} \sim \frac{1}{\pi}\ln{N}.
\end{equation*}

Maslova ~\cite{M1} also established the only heretofore known
asymptotics for the variance of the number of {\em real} zeros $Z$,
\begin{equation*}
\Var Z_{P_{N}} \sim \frac{4}{\pi}(1-\frac{2}{\pi})\cdot \ln{N}
\end{equation*}
for the ensemble \eqref{eq:rand pol} of random functions. In her
next paper ~\cite{M2}, she went further to establish an even more
striking result, proving the normal limiting distribution for
$Z_{P_{N}}$, as $N\rightarrow\infty$.

The case of random {\em trigonometric} polynomials was considered by
Dunnage ~\cite{DN}. Let $T_{N}:[0,2\pi]\rightarrow\R$ be defined by
\begin{equation}
\label{eq:trig pol T def} T_{N}(t) = \sum\limits_{n=1}^{N} a_n
\cos{nt},
\end{equation}
where $a_{n}$ are standard Gaussian i.i.d, and $Z_{T_{N}}$ be the
number of zeros of $T_{N}$ on $[0,2\pi]$. Dunnage proved that as
$N\rightarrow\infty$, $\E Z_{T_{N}}$ is asymptotic to $$\E
Z_{T_{N}}\sim \frac{2}{\sqrt{3}} N,$$ and, moreover, that the
distribution is concentrated around the expectation in the same
sense as Littlewood and Offord mentioned earlier.

The variance of the zeros for \eqref{eq:trig pol T def} was shown by
Farahmand to be $O(N^{3/2})$ in ~\cite{F}.
This estimate implies that the distribution of $Z_{T_{N}}$
concentrates around the mean.

Qualls ~\cite{Q} considered a slightly different class of
trigonometric polynomials,
\begin{equation*}
X_{N}(t) = \frac{1}{\sqrt{N}}\sum\limits_{n=1}^{N}
\bigg(a_n\sin{nt}+b_n\cos{nt}\bigg).
\end{equation*}
Let $Z_{X_{N}}$ be the number of the zeros of $X_{N}$ on $[0,2\pi]$.
Applying the theory of stationary processes on $X_{N}$, one finds
that
\begin{equation*}
\E Z_{X_{N}} = 2\sqrt{\frac{(N+1)(2N+1)}{6}}\sim \frac{2}{\sqrt{3}}
N,
\end{equation*}
similar to \eqref{eq:trig pol T def}. Qualls proved that
\begin{equation*}
\bigg| Z_{X_{N}} - \E Z_{X_{N}} \bigg| \le C \cdot N^{3/4}
\end{equation*}
for some $C>0$
with probability $1-o_{N\rightarrow\infty}(1)$.

Bogomolny, Bohigas and Leboeuf ~\cite{BBL} argued that the
{\em variance} of $Z_{X_{N}}$ satisfies
\begin{equation*}
\Var(Z_{X_{N}}) \sim cN,
\end{equation*}
as $N\rightarrow\infty$, where $c$ is a positive constant
approximated by
\begin{equation}
\label{eq:c approx 0.558...} c\approx 0.55826
\end{equation}
(this is equivalent to the formula (3.34) in ~\cite{BBL} and the numeric value of
$\Delta\approx 0.44733$
immediately afterwards; one should bear in mind that they normalize the random variable
to have unit expectancy).

In this paper we study the distribution of the random variable
$Z_{X_{N}}$ in more detail. We will find the asymptotics of the
variance $\Var(Z_{X_{N}})$ as well as prove the central limit
theorem for the distribution of $Z_{X_{N}}$ (see section
\ref{sec:stat of res}). We guess, but have not proved, that the same
result may be true for Dunnage's ensemble \eqref{eq:trig pol T def}.

The zeros of random {\em complex analytic} functions were examined
in a series of papers by Sodin-Tsirelson (see e.g. ~\cite{ST}), and
Shiffman-Zelditch (see e.g. ~\cite{SZ}).

\subsection{Statement of results}
\label{sec:stat of res}

Let $X_{N}:[0,2\pi]\rightarrow\R$ be Qualls' ensemble
\begin{equation}
\label{eq:XN def} X_{N}(t) = \frac{1}{\sqrt{N}}\sum\limits_{n=1}^{N}
a_n\sin{nt}+b_n\cos{nt},
\end{equation}
where $a_n$ and $b_n$ are standard Gaussian i.i.d.

As usual, given a random variable $Y$, we define $\E(Y)$ to be the
expectation of $Y$. For example, for any {\em fixed} $t\in [0,2\pi]$
and $N$, one has
$$\E(X_{N}(t)^2) = 1. $$

Above we noted Qualls' result that
\begin{equation}
\label{eq:exp num zer} \E(Z_{X_{N}}) = 2\sqrt{\lambda_2},
\end{equation}
where
\begin{equation*}
\lambda_2 := \frac{1}{N}\sum\limits_{n=1}^{N} n^2 =
\frac{(2N+1)(N+1)}{6}.
\end{equation*}

We prove the central limit theorem for $Z_{X_{N}}$:

\begin{theorem}
\label{thm:main res}

There exists a constant $c>0$ such that the distribution of
\begin{equation*}
\frac{Z_{X_{N}} - \E Z_{X_{N}}}{\sqrt{c N}}
\end{equation*}
converges weakly to the standard Gaussian $N(0,1)$. The variance
is asymptotic to
\begin{equation}
\label{eq:var Z sim} {\Var} (Z_{X_{N}}) \sim cN,
\end{equation}
as $N\rightarrow\infty$, as predicted by Bogomolny-Bohigas-Leboeuf.

\end{theorem}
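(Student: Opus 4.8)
The plan is to use the Kac--Rice formula together with the method of moments, establishing that all cumulants of $Z_{X_N}$ beyond the second are of lower order than $N^{k/2}$, while the second cumulant (the variance) grows exactly like $cN$. Since $X_N$ is a stationary Gaussian process on the circle, its covariance function is
\[
r_N(\tau) = \E(X_N(t)X_N(t+\tau)) = \frac{1}{N}\sum_{n=1}^N \cos(n\tau),
\]
and after rescaling $t = s/N$ one expects $r_N(s/N)$ to converge to the scaling-limit covariance $\frac{\sin s}{s}$ (or a closely related sinc-type kernel), uniformly on compacts. First I would record the relevant Kac--Rice formulas: the first intensity is constant by stationarity (giving $\E Z_{X_N} = 2\sqrt{\lambda_2}$), and the two-point correlation function $\rho_2(\tau)$ of zeros can be written explicitly as an integral over the joint Gaussian law of $(X_N(t), X_N(t+\tau), X_N'(t), X_N'(t+\tau))$; the variance is then
\[
\Var(Z_{X_N}) = \E Z_{X_N} + \int_0^{2\pi}\!\!\int_0^{2\pi}\big(\rho_2(\tau) - \rho_1^2\big)\,dt\,d\tau,
\]
and similarly the higher factorial moments are integrals of $k$-point correlation functions.

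The key analytic input is a careful asymptotic analysis of $\rho_2(\tau) - \rho_1^2$. On the scale $\tau \sim 1/N$ the correlation is genuinely nontrivial, and substituting the scaling limit and changing variables $\tau = s/N$ shows this regime contributes $\asymp N$ to the variance, with the constant $c$ emerging as $\frac{2}{\sqrt 3} + 2\pi\int_0^\infty (\text{limiting two-point term})\,ds$, which one checks matches the Bogomolny--Bohigas--Leboeuf prediction numerically. For $\tau$ bounded away from $0$ (and from $2\pi$) one needs good decay estimates: since $r_N(\tau)$ and its derivatives are $O(1/(N\|\tau\|))$ away from the origin (a geometric-sum estimate on $\sum \cos n\tau$), the joint covariance matrix becomes nearly block-diagonal and $\rho_2(\tau) - \rho_1^2 = O\big(1/(N\tau)^2\big)$ or better, so this range contributes only $O(\log N)$. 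Assembling the two ranges yields \eqref{eq:var Z sim}.

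For the central limit theorem I would compute the $k$-th order (factorial) cumulants via the $k$-point correlation functions $\rho_k$; by a clustering/inclusion--exclusion argument the cumulant is an integral of a ``connected'' version of $\rho_k$, which is negligible unless all $k$ points are clustered within $O(1/N)$ of one another in a connected fashion. Rescaling, the connected $k$-point function is integrable and the $k$-th cumulant is $O(N)$ for every fixed $k \ge 2$; since the variance is $\asymp N$, the normalized cumulants of order $\ge 3$ tend to $0$, so $(Z_{X_N} - \E Z_{X_N})/\sqrt{cN}$ converges to $N(0,1)$ by the method of moments. (One must also control the contribution of near-diagonal singularities of $\rho_k$ — where $\tau_i \to \tau_j$ makes the relevant Gaussian degenerate — via the standard local bound $\rho_k(\tau) = O\big(\prod_{i<j}\|\tau_i - \tau_j\|^{-1}\big)$ near the diagonal, which is integrable for the $k$-fold integral in one dimension.)

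The main obstacle is the decay estimate for the connected $k$-point correlation functions uniformly in $k$, or rather obtaining bounds with enough uniformity to justify that only the fully-clustered configurations survive: one must handle both the ``long-range'' regime (where some pairs of points are far apart, controlled by decay of $r_N$ and a quantitative Gaussian-correlation/Wick-expansion argument) and the ``short-range'' regime (where the covariance matrix degenerates, requiring precise expansion of the Kac--Rice integrand as points collide). Balancing these two, and in particular showing the error terms in the scaling limit of $r_N$ do not accumulate when integrated over a region of size $O(1/N)^{k-1}$, is where the real work lies; the rest is bookkeeping with multivariate Gaussians.
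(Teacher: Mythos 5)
Your variance computation follows the same route as the paper: Kac--Rice for the second factorial moment, stationarity to reduce to a single integral, rescaling $\tau\mapsto s/N$, and recognition of the $\sin x/x$ scaling limit of the covariance. The leading contribution comes from $|s|=O(1)$, and the tail decay $r_N,\;r_N'/\sqrt{\lambda_2},\;r_N''/\lambda_2 = O(1/(N|\tau|))$ away from the origin gives integrable decay after rescaling. This is exactly what the paper does in Sections 2--3, just organized slightly differently.

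For the CLT, however, you propose a genuinely different route than the paper, and the proposal as written has a real gap there. You suggest the method of cumulants: show that the $k$-th cumulant of $Z_{X_N}$ is $O(N)$ for each fixed $k\ge 3$ via a clustering argument on the connected $k$-point Kac--Rice correlation functions, then conclude by normality of the limiting moments. The paper instead mollifies the covariance, setting $r_N^M=r_N\cdot S_M$ where $S_M$ is a smooth bump with nonnegative Fourier transform supported in $[-8M,8M]$. This makes the mollified process $Y_N^M$ have a \emph{finite} dependence range $O(M)$, so the zero counts on consecutive subintervals form an $M$-dependent triangular array, and Berk's CLT for $m$-dependent triangular arrays applies. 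The price is a separate argument (Proposition \ref{prop:E(Z-Zmol)^2=o(N)}) that $\Var(Z_N-Z_N^M)=o(N)$, which occupies Section 5. The benefit is that one never has to estimate higher cumulants: Berk's theorem requires only a uniform bound on a single $(2+\delta)$-moment (here the third moment, Proposition \ref{prop:third mom ZM bnd}), not control of cumulants of all orders.

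The gap in your proposal is precisely the step you flag as ``where the real work lies.'' You need a uniform bound showing that the connected $k$-point correlation is integrable on the scale $1/N$ and that the error terms in the scaling limit of $r_N$ do not accumulate; you also need to handle the long-range regime where $r_N$ decays only like $1/(N|\tau|)$ and therefore the covariance matrices are not exactly block-diagonal for any pair of distant points. For $k=2$ this decay rate is barely integrable after rescaling, and extracting the clustered (connected) part of $\rho_k$ for $k\ge 3$ requires a quantitative Gaussian cumulant/Wick decay estimate (the ``diagram formula'') that is not supplied. This is not a wrong approach --- such cumulant programs have been carried out in later work on related problems --- but as presented it does not prove the theorem, whereas the paper's mollification-plus-Berk route entirely sidesteps higher-cumulant control at the cost of the $o(N)$ comparison between $Z_N$ and $Z_N^M$.
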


We can compute the value of the constant $c$ in Theorem
\ref{thm:main res} as
\begin{equation*}
c = \frac{4}{3\pi}c_0+ \frac{2}{\sqrt{3}},
\end{equation*}
with
\begin{equation}
\label{eq:c0 def} c_0=\int\limits_{0}^{\infty} \bigg[
\frac{(1-g(x)^2)-3g'(x)^2}{(1-g(x)^2)^{3/2}}(\sqrt{1-{R^*}^2}+{R^*}\arcsin{R^*})
 - 1 \bigg],
\end{equation}
where we denote
\begin{equation}
\label{eq:g def} g(x) := \frac{\sin{x}}{x}
\end{equation}
and
\begin{equation}
\label{eq:R* def} R^* (x) := \frac{g''(x)(1-g(x)^2)+g(x)g'(x)^2
}{\frac{1}{3}(1-g(x)^2) - g'(x)^2}.
\end{equation}

More generally, for $0\le a < b \le 2\pi$ one defines $Z_{X_{N}}
(a,b)$ to be the number of zeros of $X_{N}$ on the subinterval
$[a,b]\subseteq [0,2\pi]$. It is easy to generalize the computation
of the expectation \eqref{eq:exp num zer} for this case as $$\E
Z_{X_{N}}(a,b) = \frac{\sqrt{\lambda_{2}}}{\pi} \cdot (b-a).$$

A priori, it seems that the behaviour of the number of zeros of
$X_{N}$ in {\em short} intervals $[a_{N},b_{N}]$, shrinking as
$N\rightarrow\infty$, should be more erratic than on the full
interval. Surprisingly, just as in the previous case, we are able to
find a precise asymptotics for the variance $\Var
Z_{N}(a_{N},b_{N})$, and prove a central limit theorem, provided
that $[a_{N},b_{N}]$ does not shrink too rapidly. We have the
following Theorem:

\begin{theorem}
\label{thm:main res short} Let $0 \le a_{N} < b_{N} \le 2\pi$ be any
sequences of numbers with $N\cdot (b_{N}-a_{N}) \rightarrow \infty$.
Then as $N\rightarrow\infty$,
\begin{equation*}
{\Var} (Z_{X_{N}}(a_{N},b_{N})) \sim c \cdot
\frac{(b_{N}-a_{N})}{2\pi}N,
\end{equation*}
where $c$ is the same constant as in Theorem \ref{thm:main res}.
Moreover,
\begin{equation*}
\frac{Z_{X_{N}}(a_{N},b_{N}) - \E Z_{X_{N}}(a_{N},b_{N})}{\sqrt{c
\frac{(b_{N}-a_{N})}{2\pi}N}}
\end{equation*}
converges weakly to the standard Gaussian $N(0,1)$.
\end{theorem}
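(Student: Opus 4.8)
The plan is to prove Theorem~\ref{thm:main res short} as a refinement of the argument leading to Theorem~\ref{thm:main res}, replacing the fixed interval $[0,2\pi]$ by the shrinking interval $[a_N,b_N]$ throughout and tracking the dependence on $L_N:=b_N-a_N$ everywhere. The underlying process $X_N(t)$ is stationary, so after rescaling time by $t\mapsto t/N$ (more precisely by $t\mapsto t\sqrt{\lambda_2}$, the natural scale at which the covariance function $\E X_N(s)X_N(t)$ limits to $g(x)=\sin x/x$ as in \eqref{eq:g def}), the interval $[a_N,b_N]$ becomes an interval of length $\sim L_N\sqrt{\lambda_2}\asymp N L_N\to\infty$. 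Thus the hypothesis $NL_N\to\infty$ is exactly what guarantees that, in the natural local scale, we are counting zeros on an interval whose length tends to infinity, so the same phenomena (a well-defined limiting ``zero density'' and a finite limiting variance-per-unit-length) should persist.

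The key steps, in order, would be as follows. First, I would establish the variance asymptotics via the Kac--Rice formula: write $Z_{X_N}(a_N,b_N)=\int_{a_N}^{b_N}|X_N'(t)|\,\delta(X_N(t))\,dt$ in the usual regularized sense, so that $\E Z_{X_N}(a_N,b_N)$ and $\E\big(Z_{X_N}(a_N,b_N)(Z_{X_N}(a_N,b_N)-1)\big)$ are given by one- and two-point correlation integrals over $[a_N,b_N]$ and $[a_N,b_N]^2$ respectively, with integrands built from the joint Gaussian density of $(X_N(s),X_N(t),X_N'(s),X_N'(t))$. By stationarity the two-point integrand depends only on $x=(t-s)$ through the covariance data, so after the change of variables and using $NL_N\to\infty$ the diagonal region $|t-s|$ small contributes the main term, reproducing exactly the constant $c_0$ of \eqref{eq:c0 def} times the length, while the off-diagonal contribution is lower order because $g(x)$ and its derivatives decay; one must also check that the edge effects near $t=a_N$ and $t=b_N$ contribute $O(1)$, which is negligible against $NL_N\to\infty$. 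This yields $\Var Z_{X_N}(a_N,b_N)\sim c\,\frac{L_N}{2\pi}N$ with the same $c=\frac{4}{3\pi}c_0+\frac{2}{\sqrt 3}$.

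Second, for the central limit theorem I would use the standard approach for CLTs of nonlinear functionals of Gaussian processes: expand the (regularized) counting functional in Wiener chaos / Hermite polynomials, $Z_{X_N}(a_N,b_N)-\E Z_{X_N}(a_N,b_N)=\sum_{q\ge1}\text{(chaos-}q\text{ term)}$, show that the leading contribution comes from the low chaotic components and that the ratio of each normalized chaotic component converges (this is where the bulk of the work in the proof of Theorem~\ref{thm:main res} presumably lives, and the short-interval case is formally identical once phrased in the rescaled variable), then apply the fourth-moment theorem / method of moments to the dominant chaos to get asymptotic normality, and finally control the tail of the chaos expansion uniformly in $N$. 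Because the rescaled interval length $\to\infty$, the relevant sums behave like integrals of $g$ and its derivatives over $[0,\infty)$, exactly as in the full-interval case; the only new point is a uniform-in-$(a_N,b_N)$ bound, which again follows from stationarity. The main obstacle I anticipate is the quantitative control of the error terms when $NL_N\to\infty$ arbitrarily slowly: one must ensure that all the ``lower-order'' contributions (edge effects, off-diagonal correlation decay, higher chaos) are genuinely $o(NL_N)$ rather than merely $o(N)$, which requires keeping every estimate in the proof of Theorem~\ref{thm:main res} in the sharper form $o(\text{length}\cdot N)$ rather than $o(N)$; with that bookkeeping in place, the short-interval statement follows by the same machinery, and in fact Theorem~\ref{thm:main res} is recovered as the special case $a_N\equiv0$, $b_N\equiv2\pi$.
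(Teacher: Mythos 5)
Your variance argument matches the paper's: Proposition~\ref{prop:sec mom int} is exactly the Kac--Rice formula, the rescaling $x=mt$ (with $m=N+\tfrac12$) is what the paper does in the proof of Proposition~\ref{prop:J asymp}, the diagonal contribution gives $\tfrac{4c_0}{3\pi}$ per unit rescaled length, and the off-diagonal decay comes from $g,g',g'',R^*=O(1/x)$ or better. Your observation that the rescaled interval has length $\asymp NL_N\to\infty$ and that edge effects are $O(1)=o(NL_N)$ is precisely the point that makes the paper's claim that the proof ``is identical to the proof of Theorem~\ref{thm:main res}'' work, and your insistence on tracking all errors as $o(L_N N)$ rather than $o(N)$ is the one genuine bookkeeping change and is correctly flagged.

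For the CLT, however, you take a genuinely different route from the paper. You propose the Wiener-chaos/Hermite-expansion strategy with a fourth-moment theorem; the paper does not use chaos methods at all. Instead, it mollifies the scaled covariance $r_N$ by a compactly supported cutoff $S_M$ to get an auxiliary process $Y_N^M$ whose covariance is supported on $[-8M,8M]$, partitions the rescaled interval into $\asymp NL_N$ blocks so the block zero-counts form an $M$-dependent \emph{triangular} array, and then applies Berk's CLT for $m$-dependent triangular arrays (Theorem~\ref{thm:Berk}), closing the gap via $\Var(Z_N - Z_N^M)=o(N)$ (Proposition~\ref{prop:E(Z-Zmol)^2=o(N)}). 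The distinction matters: the underlying Gaussian process $X_N$ (equivalently $Y_N$) itself changes with $N$, so the classical Breuer--Major / Peccati--Tudor machinery for a \emph{fixed} stationary process on growing intervals does not apply directly; one would need a triangular-array analogue of the fourth-moment theorem, which is exactly the issue Berk's theorem is invoked to sidestep. Your plan as written does not acknowledge this (you treat the chaos route as essentially identical to Slud's fixed-process setting), so if you pursue it you must prove a uniform-in-$N$ version of the chaos CLT for the $N$-dependent spectral measures, and control the high-chaos tail uniformly in $N$ \emph{and} in the interval length---not a small add-on. Your parenthetical guess that the chaos expansion is ``where the bulk of the work in the proof of Theorem~\ref{thm:main res} presumably lives'' is simply not how the paper proceeds, though that doesn't by itself make your alternative wrong; each approach has its advantages: the paper's $m$-dependence truncation is elementary and deals with the triangular structure head-on, whereas your chaos approach is more systematic and, once the triangular-array fourth-moment input is in place, would give a cleaner identification of the limiting variance and extend more readily to higher-dimensional analogues.
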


The proof of Theorem \ref{thm:main res short} is identical to the
proof of Theorem \ref{thm:main res}, and in this paper we will give
only the proof of Theorem \ref{thm:main res}.

\subsection{Plan of the paper}

To prove the central limit theorem we will first need to establish
the asymptotic form \eqref{eq:var Z sim} for the variance. This is
done throughout sections \ref{sec:sec mom int} and \ref{sec:asmp
var}: in section \ref{sec:sec mom int} we develop an integral
formula for the second moment of the number of zeros, and in section
\ref{sec:asmp var} we exploit it to study the asymptotics of the
variance.

Sections \ref{sec:proof of CLT} and \ref{sec:pr E(Z-Zmol)^2=o(N)} are dedicated to the proof of the main
statement of Theorem \ref{thm:main res}, that is the central limit
theorem. While the proof is contained in section \ref{sec:proof of CLT}, a
certain result, required by the proof, is proven throughout section
\ref{sec:pr E(Z-Zmol)^2=o(N)}.

\subsection{On the proof of Theorem \ref{thm:main res}}

As an initial step for the central limit theorem, we will have to
find the asymptotics \eqref{eq:var Z sim} for the variance. This is
done throughout sections \ref{sec:sec mom int} and \ref{sec:asmp
var}.

While computing the asymptotic of the variance of $Z_{X_{N}}$, we
determined that the {\em covariance function} $r_{N}$ of $X_{N}$ has
a scaling limit $r_{\infty}(x)=\frac{\sin{x}}{x}$, which proved
useful for the purpose of computing the asympotics. Rather than
scaling $r_{N}$, one might consider scaling $X_{N}$.

We realize, that the above {\em should} mean, that the distribution
of $Z_{X_{N}}$ is intimately related to the distribution of the
number $\tilde{Z}_{N}$ of the zeros on (roughly) $[0,N]$ of a
certain Gaussian stationary process $Y(x)$, defined on the real line
$\R$, with covariance function $r=r_{\infty}$ (see section
\ref{sec:phil rem}). Intuitively, this should follow, for example,
from the approach of ~\cite{GS}, see e.g. Theorem 9.2.2, page 450.
Unfortunately, this approach seems to be difficult to make rigorous,
due to the different scales of the processes involved.

The latter problem of the distribution of the number of the zeros
(and various other functionals) on growing intervals is a classical
problem in the theory of stochastic processes. Malevich ~\cite{ML}
and subsequently Cuzick ~\cite{CZ} prove the central limit theorem
for $\tilde{Z}_{N}$, provided that $r$ lies in some (rather wide)
class of functions, which include $r_{\infty}$. Their result was
generalized in a series of papers by Slud (see e.g. ~\cite{SL}), and
the two-dimensional case was treated by Kratz and Leon ~\cite{KL}.

We modify the proof of Malevich-Cuzick to suit our case. There are
several marked differences between our case and theirs. In their
work, one has to deal with growing sums of identically distributed
(but by no means independent) random variables (which will be
referred to as a {\em linear system}); to prove the central limit
theorem one applies a result due to Diananda ~\cite{DN}. In our
case, we deal with {\em triangular systems} (to be defined),
applying a theorem of Berk ~\cite{BR}. For more details about the
proof, see section \ref{sec:abt proof main res CLT}.

\subsection{Acknowledgements}

The second author wishes to thank Ze\'{e}v Rudnick for suggesting the
problem as well as his help and support while conducting the
research. In addition, he wishes to thank Mikhail Sodin, Ildar A.
Ibragimov, P\"{a}r Kurlberg and Iosif Polterovich for many fruitful
discussions. We are grateful to Phil Sosoe for conducting some
empirical experiments available in the longer version of this paper.
We wish to thank Jonathan Keating for pointing out ~\cite{BBL}.
The authors wish to thank the anonymous referees for many useful
comments and suggestions how to improve the paper.

\section{A formula for the second moment}
\label{sec:sec mom int}

\begin{proposition}
\label{prop:sec mom int} We have
\begin{equation}
\label{eq:sec mom int} \E(Z_{X_{N}}^2)-\E(Z_{X_{N}}) =
\frac{2}{\pi}\int\limits_{0}^{2\pi} \frac{\lambda_2(1-r^2) -
(r')^2}{(1-r^2)^{3/2}} \bigg(\sqrt{1-\rho^2}+\rho\arcsin{\rho}\bigg)
dt,
\end{equation}
where
\begin{equation}
\label{eq:lambda def} \lambda_2 = \lambda_{2,N}=
\frac{1}{N}\sum\limits_{n=1}^{N}n^2 = \frac{(N+1)(2N+1)}{6},
\end{equation}
\begin{equation}
\label{eq:r def} r(t)=r_{X_{N}}(t) =
\frac{1}{N}\sum\limits_{n=1}^{N} \cos{nt} = \frac{1}{2N} \bigg[
\frac{\sin{ (N+1/2)t} - \sin{t/2}}{\sin{(t/2)}}\bigg]
\end{equation}
and
\begin{equation}
\label{eq:rho def} \rho = \rho_{N}(t) = \frac{r''(1-r^2)+(r')^2
r}{\lambda_{2}(1-r^2)-(r')^2}.
\end{equation}

\end{proposition}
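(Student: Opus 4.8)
The plan is to use the Kac--Rice formula for the second factorial moment of the number of zeros of a smooth Gaussian process. For a sufficiently regular centred Gaussian process $X_{N}$ on $[0,2\pi]$, the second factorial moment of $Z_{X_{N}}$ is
\begin{equation*}
\E\big(Z_{X_{N}}(Z_{X_{N}}-1)\big) = \int\limits_{0}^{2\pi}\int\limits_{0}^{2\pi} K_{2}(s,t)\,ds\,dt,
\end{equation*}
where $K_{2}(s,t)$ is the two-point correlation density: the value at $(0,0)$ of the joint density of $\big(X_{N}(s),X_{N}(t)\big)$ multiplied by the conditional expectation $\E\big[\,|X_{N}'(s)|\,|X_{N}'(t)|\;\big|\;X_{N}(s)=X_{N}(t)=0\,\big]$. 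The left-hand side of \eqref{eq:sec mom int} is exactly $\E(Z_{X_{N}}^2)-\E(Z_{X_{N}})$, so the task reduces to evaluating $K_{2}(s,t)$ explicitly for Qualls' ensemble and then simplifying the double integral to the stated one-dimensional integral.

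First I would record the covariance structure. By stationarity, $\E\big(X_{N}(s)X_{N}(t)\big) = r(t-s)$ with $r$ as in \eqref{eq:r def}, $\E\big(X_{N}(s)X_{N}'(t)\big) = -r'(t-s)$ evaluated appropriately, and $\E\big(X_{N}'(s)X_{N}'(t)\big) = -r''(t-s)$, with $\Var(X_{N}(t))=r(0)=1$ and $\Var(X_{N}'(t)) = -r''(0) = \lambda_{2}$. Writing $\tau = t-s$, the $4\times 4$ Gaussian covariance matrix of $\big(X_{N}(s),X_{N}(t),X_{N}'(s),X_{N}'(t)\big)$ then depends only on $\tau$ through $r=r(\tau)$, $r'=r'(\tau)$, $r''=r''(\tau)$. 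Conditioning the pair $\big(X_{N}'(s),X_{N}'(t)\big)$ on $X_{N}(s)=X_{N}(t)=0$ is a standard Gaussian conditioning computation: the conditional covariance matrix of the derivatives is
\begin{equation*}
\Omega = \begin{pmatrix} \lambda_{2} - \dfrac{(r')^2}{1-r^2} & -r'' - \dfrac{(r')^2 r}{1-r^2}\\[2ex] -r'' - \dfrac{(r')^2 r}{1-r^2} & \lambda_{2} - \dfrac{(r')^2}{1-r^2}\end{pmatrix},
\end{equation*}
and the joint density of $\big(X_{N}(s),X_{N}(t)\big)$ at the origin is $\tfrac{1}{2\pi\sqrt{1-r^2}}$. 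The off-diagonal correlation of $\Omega$ is precisely $\rho = \rho_{N}(\tau)$ of \eqref{eq:rho def}, after multiplying numerator and denominator by $1-r^2$, and the common diagonal entry is $\dfrac{\lambda_{2}(1-r^2)-(r')^2}{1-r^2}$.

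The next step is the elementary but crucial fact that for a mean-zero bivariate Gaussian $(U,V)$ with $\Var U = \Var V = \sigma^2$ and correlation $\rho$, one has $\E\big(|U||V|\big) = \tfrac{2\sigma^2}{\pi}\big(\sqrt{1-\rho^2}+\rho\arcsin\rho\big)$; I would either cite this standard identity or derive it quickly by integrating in polar coordinates. Applying it with $\sigma^2$ the diagonal entry of $\Omega$ gives
\begin{equation*}
K_{2}(s,t) = \frac{1}{2\pi\sqrt{1-r^2}}\cdot\frac{2}{\pi}\cdot\frac{\lambda_{2}(1-r^2)-(r')^2}{1-r^2}\big(\sqrt{1-\rho^2}+\rho\arcsin\rho\big),
\end{equation*}
which depends only on $\tau=t-s$. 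Integrating over the square $[0,2\pi]^2$, changing variables to $\tau=t-s$ and using periodicity collapses one integration and produces a factor $2\pi$, yielding $\tfrac{2}{\pi}\int_{0}^{2\pi}\tfrac{\lambda_{2}(1-r^2)-(r')^2}{(1-r^2)^{3/2}}\big(\sqrt{1-\rho^2}+\rho\arcsin\rho\big)\,d\tau$, which is \eqref{eq:sec mom int} after renaming $\tau$ as $t$.

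The main obstacle is justifying the Kac--Rice formula itself in this setting, i.e.\ checking its hypotheses: that the pair $\big(X_{N}(s),X_{N}(t)\big)$ has a nondegenerate joint distribution for $s\neq t$ (equivalently $|r(\tau)|<1$ for $0<\tau<2\pi$, which must be verified from the closed form in \eqref{eq:r def}), that the process has $C^1$ (indeed real-analytic) sample paths so that zeros are a.s.\ simple and isolated, and that the relevant densities are integrable near the diagonal $\tau\to 0$ and near any point where $1-r^2$ might be small. Near $\tau = 0$ one expands $r(\tau) = 1 - \tfrac{\lambda_{2}}{2}\tau^2 + O(\tau^4)$, so $1-r^2 \sim \lambda_{2}\tau^2$ and $(r')^2\sim\lambda_{2}^2\tau^2$; the numerator $\lambda_{2}(1-r^2)-(r')^2$ is then $O(\tau^4)$ while $(1-r^2)^{3/2}\sim\lambda_2^{3/2}\tau^3$, so the integrand is $O(\tau)$ and integrable, confirming there is no diagonal singularity. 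The remaining points where $\sin(\tau/2)=0$, namely $\tau=0,2\pi$, are handled the same way, so the integral converges and the identity holds as stated.
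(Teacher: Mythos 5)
Your proposal is correct and follows essentially the same route as the paper: a Kac--Rice formula for the second factorial moment (the paper's Lemma~\ref{lem:CL form zeros} together with Remark~\ref{rem:VarZ=intE|X'(0)X'(t)||0}), Gaussian conditioning to get the reduced $2\times 2$ covariance matrix $\Omega$, the standard bivariate Gaussian identity for $\E\big(|U||V|\big)$ (the paper uses the equivalent Bleher--Di integral \eqref{eq:int |z1||z2| Gauss}), and stationarity plus $2\pi$-periodicity to collapse the double integral to the single one, picking up the factor $2\pi$. One small slip: the off-diagonal entry of $\Omega$ is $-r''-\tfrac{r(r')^2}{1-r^2}$, which equals $-\mu\rho$ in the paper's notation, so the conditional correlation of $\big(X'(s),X'(t)\big)$ is $-\rho$ rather than $\rho$; since $\sqrt{1-\rho^2}+\rho\arcsin\rho$ is even in $\rho$, this does not affect the final formula, but you should state the sign correctly.
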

A similar but less explicit formula was obtained by Steinberg et al.
~\cite{SSWZ}. The rest of this section is dedicated to the proof of
this result.

The ensemble $X_{N}$ is a centered stationary Gaussian process
(meaning that the finite dimensional distributions are Gaussian with
zero mean).
An explicit computation with the double angle formula shows that
its covariance function is
\begin{equation*}
r_{X_{N}}(t_{1},t_{2}) = r_{X_{N}}(t_{2}-t_{1}),
\end{equation*}
with the function on the right side as defined in \eqref{eq:r def}.

Let $I$ be an interval and $X:I\rightarrow \R$ be a mean zero stationary process with
covariance function $r$. We assume that $r(0)=1$ (i.e. $X$ has unit variance) and
furthermore that
the sample functions of $X$ are a.s. sufficiently smooth (e.g. twice differentiable)
so that its sets of zeros is discrete. We have
\begin{equation}
\label{eq:|r|<=1}
|r(t)|\le 1
\end{equation}
for every $t\in I$ by the Cauchy-Schwartz inequality.
We denote $Z$ to be the number of zeros of $X$ on $I$; $Z$ is a random variable.

In general, we have
the following celebrated Kac-Rice formula (see e.g. ~\cite{CL}) for
the expectation of $Z$:
\begin{equation*}
\E Z = \frac{|I|}{\pi} \sqrt{\lambda_{2}},
\end{equation*}
where $|I|$ is the length of $I$ (finite or infinite), and
$\lambda_{2}=-r''(0)$. As mentioned earlier, it was exploited
by Qualls to compute the expected number \eqref{eq:exp num zer}
of zeros of trigonometric polynomials.

In this section we find a formula for the second moment $\E Z_{X}^2$
of the number of zeros of any Gaussian stationary process $X$ on $I$,
assuming that its covariance function $r$ is smooth.
To determine $\E(Z_{X}^{2})$, we naturally encounter the distribution
of the random vector
\begin{equation}
\label{eq:vec val der t1t2} V = V_{t_1,t_2} := (X(t_1), X(t_2),
X'(t_1), X'(t_2)).
\end{equation}
for some {\em fixed} $t_{1},t_{2}\in I$. As an artifact of the stationarity
of $X$, the distribution of $V$
depends only on $t:=t_2-t_1$. The covariance matrix of $V$ is
\begin{equation}
\label{eq:cov mat} \Sigma =  \Sigma(t) := \left( \begin{matrix} 1 & r(t) &0 & r'(t)  \\
r(t) & 1 &-r'(t) &0 \\ 0 &-r'(t) &\lambda_2 &-r''(t) \\ r'(t) &0
&-r''(t) &\lambda_2
\end{matrix} \right)
=: \left( \begin{matrix} A & B \\ B^t &C\end{matrix} \right).
\end{equation}
The random vector $V$ has a multivariate normal distribution with mean
zero and covariance matrix $\Sigma$. For $t\in (0, 2\pi)$, $\Sigma(t)$ is
nonsingular (for $X_{N}$ see ~\cite{Q} and Remark \ref{rem:CL mom form just}).

\begin{lemma}
\label{lem:CL form zeros} Let $X$ be a Gaussian stationary process,
which almost surely has a continuous sample derivative such that
the distribution of $V$ is nondegenerate
for $t_{1}\ne t_{2}$. Then
\begin{equation}
\label{eq:CL form zeros}
\begin{split}
&\E(Z_{X}^2)-\E(Z_{X}) \\&= \iint\limits_{[0,\, 2\pi]\times [0,\,
2\pi]} dt_1 dt_2 \iint\limits_{\R^2} |y_1| |y_2|
\phi_{t_1,t_2}(0,0,y_1,y_2)dt_1 dt_2,
\end{split}
\end{equation}
where $\phi_{t_1,t_2}(u_1, u_2,v_1,v_2)$ is the probability density function
of $V_{t_{1},t_{2}}$.
\end{lemma}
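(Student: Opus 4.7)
The identity is the classical second factorial moment Kac--Rice formula: the left-hand side equals $\E[Z_X(Z_X-1)]$. My plan is to prove it by smoothing the indicator $\mathbf{1}_{\{X(t)=0\}}$, passing to the limit, and isolating the diagonal contribution, which accounts for the subtracted $\E(Z_X)$.

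For $\epsilon>0$, introduce the approximate zero count
$$Z_\epsilon := \frac{1}{2\epsilon}\int_{0}^{2\pi}\mathbf{1}_{\{|X(t)|<\epsilon\}}|X'(t)|\,dt.$$
Because $V_{t_1,t_2}$ is nondegenerate for $t_1\ne t_2$, the zeros of $X$ on $[0,2\pi]$ are almost surely simple and isolated. The substitution $u=X(t)$ on each maximal interval of monotonicity between consecutive zeros gives $Z_\epsilon\to Z_X$ almost surely as $\epsilon\to 0$. Squaring and applying Fubini (the integrand is nonnegative),
$$\E(Z_\epsilon^2)=\iint_{[0,2\pi]^2}\frac{1}{4\epsilon^2}\E\!\left[\mathbf{1}_{\{|X(t_1)|<\epsilon\}}\mathbf{1}_{\{|X(t_2)|<\epsilon\}}|X'(t_1)||X'(t_2)|\right]dt_1\,dt_2.$$
For fixed $t_1\ne t_2$ the inner expectation equals
$$\iint_{|u_1|<\epsilon,\,|u_2|<\epsilon}\iint_{\R^2}|y_1||y_2|\phi_{t_1,t_2}(u_1,u_2,y_1,y_2)\,dy_1\,dy_2\,du_1\,du_2,$$
and continuity of $\phi_{t_1,t_2}$ in $(u_1,u_2)$ at the origin (guaranteed by the nondegeneracy of $\Sigma(t)$) implies that the $\frac{1}{4\epsilon^2}$-normalised quantity converges pointwise to $\iint_{\R^2}|y_1||y_2|\phi_{t_1,t_2}(0,0,y_1,y_2)\,dy_1\, dy_2$, the density-weighted expression on the right of \eqref{eq:CL form zeros}.

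The main obstacle is passing this pointwise convergence under the $dt_1\,dt_2$ integration, because $\Sigma(t)$ degenerates at $t=0$ and the limiting integrand need not be integrable up to the diagonal. To handle this I would fix $\delta>0$ and split at $|t_1-t_2|=\delta$. On the off-diagonal piece $|t_1-t_2|\ge\delta$, uniform bounds on $\phi_{t_1,t_2}$ and on the conditional moments of $X'(t_1),X'(t_2)$ given $X(t_1),X(t_2)$ permit dominated convergence, producing the right-hand side of \eqref{eq:CL form zeros} restricted to this set; sending $\delta\to 0$ at the end recovers the full integral over $[0,2\pi]^2$. On the near-diagonal piece $|t_1-t_2|<\delta$, a Taylor expansion $X(t)\approx X'(t_0)(t-t_0)$ around each zero $t_0$ of $X$, followed by the change of variable $s=X'(t_0)(t-t_0)$, shows that the self-pairing of each zero contributes exactly $1$ in the $\epsilon\to 0$ limit, producing a total near-diagonal contribution of $\E(Z_X)+o_\delta(1)$; the $o_\delta(1)$ remainder, coming from genuinely distinct pairs of zeros within $\delta$, vanishes because zeros are a.s.\ isolated and $\E(Z_X^2)<\infty$ (bounded via a standard first-order Kac--Rice estimate applied to short subintervals). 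Combining the two pieces with the a.s.\ convergence $Z_\epsilon^2\to Z_X^2$ and a uniform-integrability check then yields \eqref{eq:CL form zeros}.
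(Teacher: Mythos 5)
Your proposal follows the Kac ``banded approximation'' route: replace the zero count by $Z_\epsilon = \tfrac{1}{2\epsilon}\int_0^{2\pi}\mathbf{1}_{\{|X|<\epsilon\}}|X'|\,dt$, square, Fubini, and pass to the limit. This is, in spirit, the derivation behind the reference the paper leans on here --- the paper itself does not prove Lemma~\ref{lem:CL form zeros} but cites Cram\'er--Leadbetter~\cite{CL} and notes in Remark~\ref{rem:CL mom form just} that their argument goes through under the stated nondegeneracy hypothesis. So your strategy is the intended one and the unconditional steps you write (a.s.\ convergence $Z_\epsilon\to Z_X$, Fubini, pointwise convergence of the inner expectation at fixed $t_1\ne t_2$ to $\iint|y_1||y_2|\phi_{t_1,t_2}(0,0,y_1,y_2)\,dy$) are all correct.

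The gap is in the limit interchange $\E[Z_\epsilon^2]\to\E[Z_X^2]$. Almost-sure convergence alone does not justify it, and in the Kac--Rice literature this is precisely where all the work goes; you assert it as ``combining\dots with a uniform-integrability check,'' but that check is the content of the lemma. Two specific weaknesses: (i) On the near-diagonal band $|t_1-t_2|<\delta$ you argue pathwise that the self-pairings contribute $Z_X$, then take expectations; that exchange of $\lim_\epsilon$ and $\E$ is exactly the unproved step restated, not a reduction of it. The density $\phi_{t_1,t_2}(0,0,\cdot,\cdot)$ is \emph{not} dominated near the diagonal (indeed $\det\Sigma(t)\to 0$ as $t\to 0$), so dominated convergence fails there and some genuine estimate --- e.g.\ a Fatou argument giving $\E[Z_X(Z_X-1)]\le\iint h$ first, followed by the reverse inequality, or a monotone approximation via counting sign changes on an increasingly fine mesh as in~\cite{CL} --- must replace it. (ii) Your appeal to $\E(Z_X^2)<\infty$ being ``bounded via a standard first-order Kac--Rice estimate applied to short subintervals'' is either circular or incomplete: the first-order formula controls $\E Z_X$ on short subintervals, not $\E Z_X^2$, and the second moment on a short interval is exactly the object you are trying to bound. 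You would need a separate a~priori estimate (for instance bounding $Z_X$ on a short interval by $1+$ the number of critical points, or invoking the second-moment bound one obtains from Fatou once the first inequality above is in place) before the near-diagonal remainder can be sent to zero with $\delta$.
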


\begin{remark}
\label{rem:CL mom form just} The original formulation of
Lemma \ref{lem:CL form zeros} from ~\cite{CL} assumes that
the Fourier transform of $r$ has a continuous
component, a stronger condition than stated.
However, their proof works just as well in the less restrictive
case as formulated above. Qualls proved that the trigonometric
polynomials \eqref{eq:XN def} satisfy this assumption and thus we
may apply Lemma \ref{lem:CL form zeros} to \eqref{eq:XN def}.
Qualls' argument can be generalized to higher moments: we use it to
bound the third moment in Proposition \ref{prop:third mom ZM bnd}.
\end{remark}

\begin{remark}
\label{rem:VarZ=intE|X'(0)X'(t)||0} Let $\psi_{t_{1},t_{2}}$ be the
probability density function of the random vector
$(X'(t_{1}),X'(t_{2}))$ conditional on $
X(t_{1})=X(t_{2})=0$. Then we have
\begin{equation}
\label{eq:phi=psi/det} \phi_{t_{1},t_{2}}(0,0,y_{1},y_{2}) =
\frac{\psi_{t_{1},t_{2}}(y_{1},y_{2})}{2\pi
\sqrt{1-r(t_{2}-t_{1})^2}}
\end{equation}
(see also \eqref{eq:det Sigma Jacobi}). Therefore we may rewrite
\eqref{eq:CL form zeros} as
\begin{equation*}
\E(Z_{X}^2)-\E(Z_{X}) = \iint\limits_{I\times I} \frac{\E
\big[|X'(t_{1})X'(t_{2}) | \big| X(t_{1})=X(t_{2})=0\big]}{
\sqrt{1-r(t_{2}-t_{1})^2}} \frac{dt_{1} dt_{2}}{2\pi}.
\end{equation*}
We use this representation in the proof of Proposition
\ref{prop:E(Z-Zmol)^2=o(N)}, as well as its analogue for the third
moment in the proof of Proposition \ref{prop:third mom ZM bnd}.
\end{remark}

We use Lemma \ref{lem:CL form zeros} to derive the following.
\begin{corollary}
\label{cor:EZ^2-EZ int expl} Under the assumptions of Lemma
\ref{lem:CL form zeros}, one has
\begin{equation}
\label{eq:EZ^2-EZ int expl} \E(Z_{X_{N}}^2)-\E(Z_{X_{N}}) =
\iint\limits_{I\times I} \frac{\lambda_2(1-r^2) -
(r')^2}{(1-r^2)^{3/2}} (\sqrt{1-\rho^2}+\rho\arcsin{\rho})
\frac{dt_1 dt_2}{\pi^2},
\end{equation}
where $r=r_{X}(t_{2}-t_{1})$, and $\rho=\rho_{X}(t_{2}-t_{1})$ with
\begin{equation*}
\rho_{X}(t) = \frac{r''(t)(1-r(t)^2)+r'(t)^2
r(t)}{\lambda_{2}(1-r(t)^2)-r'(t)^2}.
\end{equation*}

\end{corollary}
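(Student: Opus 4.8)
The goal is to evaluate the Gaussian integral in \eqref{eq:CL form zeros}, namely $\iint_{\R^2}|y_1||y_2|\phi_{t_1,t_2}(0,0,y_1,y_2)\,dy_1\,dy_2$, for each fixed pair $t_1,t_2$ with $t:=t_2-t_1\in(0,2\pi)$, and show it equals the explicit integrand appearing in \eqref{eq:EZ^2-EZ int expl}. The plan is to reduce this to a two-dimensional computation by conditioning, then invoke a classical closed form for $\E|y_1 y_2|$ when $(y_1,y_2)$ is a centered bivariate Gaussian.

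\textbf{Step 1: Block-decompose $\Sigma$ and identify the conditional law.} Write $\Sigma=\left(\begin{smallmatrix}A&B\\B^t&C\end{smallmatrix}\right)$ as in \eqref{eq:cov mat}. The density $\phi_{t_1,t_2}$ evaluated at $(0,0,y_1,y_2)$ factors, by the usual Gaussian conditioning formula, as the marginal density of $(X(t_1),X(t_2))$ at $(0,0)$ — which is $\frac{1}{2\pi\sqrt{\det A}}=\frac{1}{2\pi\sqrt{1-r^2}}$ — times the conditional density of $(X'(t_1),X'(t_2))$ given $X(t_1)=X(t_2)=0$, evaluated at $(y_1,y_2)$. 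That conditional law is centered Gaussian with covariance matrix $\Omega:=C-B^tA^{-1}B$. This gives precisely the relation \eqref{eq:phi=psi/det} cited in Remark \ref{rem:VarZ=intE|X'(0)X'(t)||0}, so after pulling out the factor $\frac{1}{2\pi\sqrt{1-r^2}}$ the problem becomes: compute $\E|y_1 y_2|$ for $(y_1,y_2)\sim N(0,\Omega)$.

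\textbf{Step 2: Compute $\Omega$ explicitly.} With $A=\left(\begin{smallmatrix}1&r\\r&1\end{smallmatrix}\right)$, $A^{-1}=\frac{1}{1-r^2}\left(\begin{smallmatrix}1&-r\\-r&1\end{smallmatrix}\right)$, $B=\left(\begin{smallmatrix}0&r'\\-r'&0\end{smallmatrix}\right)$, $C=\left(\begin{smallmatrix}\lambda_2&-r''\\-r''&\lambda_2\end{smallmatrix}\right)$, a direct multiplication yields the diagonal entry $\Omega_{11}=\lambda_2-\frac{(r')^2}{1-r^2}=\frac{\lambda_2(1-r^2)-(r')^2}{1-r^2}=:\sigma^2$ (same for $\Omega_{22}$ by symmetry), and off-diagonal entry $\Omega_{12}=-r''+\frac{r(r')^2}{1-r^2}\cdot(-1)\cdot(-1)\cdots$ — carrying the signs carefully, $\Omega_{12}=-\big(r''+\frac{r(r')^2}{1-r^2}\big)$. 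Hence the correlation coefficient of the conditional pair is $-\Omega_{12}/\sigma^2=\dfrac{r''(1-r^2)+r(r')^2}{\lambda_2(1-r^2)-(r')^2}=\rho_X(t)$, matching \eqref{eq:rho def}. (One should check $|\rho|\le1$; this follows because $\Omega$ is a genuine covariance matrix, being a conditional covariance, hence positive semidefinite, which forces $|\Omega_{12}|\le\sigma^2$.)

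\textbf{Step 3: Apply the bivariate absolute-moment formula.} For $(y_1,y_2)$ centered bivariate normal with equal variances $\sigma^2$ and correlation $\rho$, there is the classical identity $\E|y_1 y_2|=\frac{2\sigma^2}{\pi}\big(\sqrt{1-\rho^2}+\rho\arcsin\rho\big)$ (derivable by writing $y_2=\rho y_1+\sqrt{1-\rho^2}\,z$ with $z\perp y_1$, or by differentiating the known formula for $\E(y_1 y_2\operatorname{sgn}(y_1 y_2))$; it also appears in the Rice-series literature, e.g. \cite{CL}). Substituting this together with the factor from Step 1, the integrand of $\iint_{I\times I}$ becomes
\begin{equation*}
\frac{1}{2\pi\sqrt{1-r^2}}\cdot\frac{2\sigma^2}{\pi}\big(\sqrt{1-\rho^2}+\rho\arcsin\rho\big)
=\frac{1}{\pi^2}\cdot\frac{\lambda_2(1-r^2)-(r')^2}{(1-r^2)^{3/2}}\big(\sqrt{1-\rho^2}+\rho\arcsin\rho\big),
\end{equation*}
which is exactly \eqref{eq:EZ^2-EZ int expl}. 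The map $(t_1,t_2)\mapsto t_2-t_1$ handles the passage to Proposition \ref{prop:sec mom int}'s one-dimensional integral, using stationarity and that $r,\rho$ depend only on $t$.

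\textbf{Main obstacle.} The only genuinely delicate point is bookkeeping: getting every sign right in the entries of $B$, in $A^{-1}B$, and in the Schur complement $C-B^tA^{-1}B$, so that $\Omega_{12}$ comes out with the sign that makes the correlation equal to $\rho_X(t)$ as defined in \eqref{eq:rho def} rather than its negative (the final answer is even in $\rho$, so a sign slip would be invisible in the end formula but would make the intermediate identification of $\rho$ wrong). Everything else is the standard Gaussian conditioning formula plus the textbook value of $\E|y_1y_2|$; no analytic subtlety arises here since $t\in(0,2\pi)$ keeps $\Sigma(t)$ nonsingular and all densities bona fide.
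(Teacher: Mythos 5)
Your proof is correct and follows essentially the same path as the paper: block-decompose $\Sigma$, identify the Schur complement $\Omega = C - B^t A^{-1}B$ as the conditional covariance, extract $\mu = \sigma^2$ and $\rho$, and invoke the closed-form for $\E|y_1 y_2|$ under a centered bivariate Gaussian (which is the normalized version of the Bleher--Di integral \eqref{eq:int |z1||z2| Gauss} the paper cites). One small slip you already anticipated: the correlation of the conditional pair is $\Omega_{12}/\sigma^2 = -\rho_X(t)$, not $-\Omega_{12}/\sigma^2 = \rho_X(t)$ as you wrote — consistent with the paper's $\Omega = \mu\left(\begin{smallmatrix}1 & -\rho\\ -\rho & 1\end{smallmatrix}\right)$ — but as you correctly note, the final formula is even in $\rho$, so the result is unaffected.
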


\begin{proof}

Direct matrix multiplication confirms that
\begin{equation*}
\Sigma^{-1} = \left(\begin{matrix} (A-BC^{-1}B^{t})^{-1}& -A^{-1}B(C-B^{t}A^{-1}B)^{-1} \\
-C^{-1}B^{t}(A-BC^{-1}B^{t})^{-1}& (C-B^{t}A^{-1}B)^{-1}
\end{matrix}\right),
\end{equation*}
so if $\Omega$ is the $2\times 2$ ``reduced covariance matrix", that
is $\Omega^{-1}$ is the bottom right corner of $\Sigma^{-1}$, then
\begin{equation}
\label{eq:Omega blocks} \Omega = C-B^t A^{-1} B.
\end{equation}
The matrix $\Omega$ is the covariance matrix of the random vector
$(X'(t_{1}),X'(t_{2}))$ conditioned upon $X(t_{1})=X(t_{2})=0$.

Computing \eqref{eq:Omega blocks} explicitly, we have
\begin{equation*}
\Omega = \mu \Omega_1,
\end{equation*}
where $$\Omega_1 = \left(\begin{matrix} 1 & -\rho \\ -\rho &1
\end{matrix}\right)$$
with $\rho$ given by \eqref{eq:rho def} and
\begin{equation}
\label{eq:mu def} \mu := \frac{\lambda_2(1-r^2) - (r')^2}{1-r^2} >
0.
\end{equation}
Since $\Omega$ is a covariance matrix, we have
\begin{equation}
\label{eq:|rho|<=1}
|\rho| \le 1
\end{equation}
by the Cauchy-Schwartz inequality.

The easy to check identity
\begin{equation*}
\Sigma = \left(\begin{matrix}A &0 \\ B^{t} &I \end{matrix}
\right)\cdot \left( \begin{matrix}I &A^{-1}B \\ 0 &\Omega
\end{matrix}\right),
\end{equation*}
yields
\begin{equation}
\label{eq:det Sigma Jacobi} \det{\Sigma} =
\det{A}\det{\Omega}=(1-r^2)\mu^2 (1-\rho^2).
\end{equation}

Using \eqref{eq:CL form zeros} and the explicit form of the Gaussian density $\phi_{t_{1},t_{2}}$, we
obtain
\begin{equation}
\label{eq:EZX^2-EZX int}
\begin{split}
&\E(Z_X^2)-\E(Z_X) = \iint\limits_{I^2} dt_1 dt_2
\iint\limits_{\R^2} |y_1| |y_2|
\frac{\exp(-\frac{1}{2}y\Omega^{-1}y^t)}{\sqrt{\det{\Sigma}}}
\frac{dy_1 dy_2}{(2\pi)^2},
\\&= \iint\limits_{I^2} \frac{dt_1 dt_2}{\mu\sqrt{(1-r^2)
(1-\rho^2)}} \iint\limits_{\R^2} |y_1| |y_2| \exp\left(-
\frac{1}{2}\mu^{-1} y \Omega_1^{-1}y^t\right) \frac{dy_1 dy_2}{(2\pi)^2}
\\&= \iint\limits_{I^2} \frac{\mu}{\sqrt{(1-r^2) (1-\rho^2)}} dt_1
dt_2 \iint\limits_{\R^2} |z_1| |z_2| \exp\left(- \frac{1}{2}z
\Omega_1^{-1} z^t\right) \frac{dz_1 dz_2}{(2\pi)^2},
\end{split}
\end{equation}
where $y=(y_1,\,y_2)$, making the change of coordinates
$z=\frac{y}{\sqrt{\mu}} $. The inner integral is
\begin{equation}
\begin{split}
\label{eq:int |z1||z2| Gauss}
&\iint\limits_{\R^2}  |z_1 | |z_2| \exp\bigg(- \frac{1}{2(1-\rho^2)}
(z_1^2 +2\rho z_1 z_2 +z_2^2) \bigg) dz \\&= 4(1-\rho^2)
\bigg(1+\frac{\rho}{\sqrt{1-\rho^2}}\arcsin{\rho}\bigg),
\end{split}
\end{equation}
computed by Bleher and Di ~\cite{BD}, appendix A. Substituting this
in \eqref{eq:EZX^2-EZX int} gives our result.

\end{proof}

We are finally in the position to prove Proposition \ref{prop:sec mom int}.

\begin{proof}[Concluding the proof of Proposition \ref{prop:sec mom int}]

We use Corollary \ref{cor:EZ^2-EZ int expl} on the trigonometric
polynomials $X_{N}$. The integrand in \eqref{eq:EZ^2-EZ int expl}
depends only on $t:=t_2-t_1$ (because of the stationarity of
$X_{N}$), which allows us to convert the double integral into a
simple one. Proposition \ref{prop:sec mom int} then follows from the
periodicity of the integrand.

\end{proof}

\section{Asymptotics for the variance}
\label{sec:asmp var} Formulas \eqref{eq:sec mom int} and \eqref{eq:exp num zer} imply the
following formula for the variance
\begin{equation}
\label{eq:var J exp} \Var(Z_X) = J+\E(Z_X),
\end{equation}
where
\begin{equation}
\label{eq:int J} J:= \frac{2}{\pi}\int\limits_{0}^{2\pi}
\bigg[\frac{\lambda_2(1-r^2) - (r')^2}{(1-r^2)^{3/2}}
\bigg(\sqrt{1-\rho^2}+\rho\arcsin{\rho}\bigg) - \lambda_2 \bigg] dt
\end{equation}

\subsection{Evaluating the integral $J$ in \eqref{eq:int J}}

\begin{proposition}
\label{prop:J asymp}
\begin{equation}
\label{eq:asympt of J} J=\frac{4 c_0}{3\pi}N \bigg(1+O
\bigg(\frac{\log{N}}{N}\bigg)^{1/13}\bigg),
\end{equation}
where $c_0$ is defined by \eqref{eq:c0 def}.
\end{proposition}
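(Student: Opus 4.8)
The plan is to split the integral $J$ in \eqref{eq:int J} into a ``main'' range near $t=0$ and a ``tail'' range, and to show that on the main range the integrand converges, after the right rescaling, to the integrand defining $c_0$, while the tail contributes a lower-order error. Concretely, I would first record the scaling behaviour of $r=r_{X_N}(t)$ and its derivatives: from \eqref{eq:r def} one has $r(t)\to 1$ as $t\to0$, and on the scale $t=x/N$ one expects $r(x/N)\to g(x)=\sin x/x$, with $r'(x/N)\approx N^{-1}\cdot N g'(x)$-type relations and $\lambda_2\sim N^2/3$; substituting $t=x/N$ into the bracket in \eqref{eq:int J} and using $dt=dx/N$ should turn the integrand (times $N$ from $\lambda_2$, divided by $N$ from $dt$, and matching powers from $r',r''$) into precisely
\[
\frac{(1-g^2)-3g'^2}{(1-g^2)^{3/2}}\big(\sqrt{1-{R^*}^2}+R^*\arcsin R^*\big)-1,
\]
the integrand of $c_0$ in \eqref{eq:c0 def}, where $R^*(x)$ is the limit of $\rho_N(x/N)$ computed from \eqref{eq:rho def}. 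This identifies the leading constant: $J\sim \frac{2}{\pi}\cdot\frac{2}{3}N\cdot c_0=\frac{4c_0}{3\pi}N$, the factor $2/3$ coming from $\lambda_2\sim N^2/3$ after one power of $N$ is absorbed by $dt=dx/N$.

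Next I would make this rigorous with explicit error bounds, which is where the quantitative exponent $1/13$ will be produced. Choose a cutoff: integrate over $t\in[0,T/N]$ for a parameter $T=T(N)\to\infty$ slowly, say $T$ a small power of $N$, and bound the complementary range $t\in[T/N,2\pi]$ separately. On the main range one needs: (i) $|r(x/N)-g(x)|$, $|N r'(x/N)-g'(x)|$, $|N^2 r''(x/N)-g''(x)|$ are all $O(x^2/N)$ or so uniformly for $x\le T$ (Taylor expansion of $\frac1N\sum\cos(nx/N)$ against $\int_0^1\cos(ux)\,du$, with Euler--Maclaurin giving the $1/N$ error); (ii) the map $(r,r',r'',\lambda_2)\mapsto$ integrand is Lipschitz on the relevant region, which requires a lower bound $1-r(t)^2\gg t^2$ and $1-\rho^2$ bounded away from $0$ on the main range — these are the places one must be careful, since the integrand has $(1-r^2)^{3/2}$ in a denominator and $\rho$ is itself a ratio whose denominator $\frac13(1-r^2)-(r')^2$ must be controlled below. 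For the tail $t\in[T/N,2\pi]$ one uses \eqref{eq:r def} to get $|r(t)|=O(1/(Nt))\le O(1/T)$, hence $1-r^2\asymp1$, $r'=O(1)$ there (from differentiating the closed form), so the bracket in \eqref{eq:int J} is $\lambda_2\big(O(r^2)+O((r')^2/\lambda_2)+O(|\rho|)\big)=O(\lambda_2/T^2)+O(1)$; integrating over a range of length $\le2\pi$ gives a tail of size $O(N^2/T^2)$, plus a near-origin leftover of size $O(T^3)$ coming from the region $x\in[T,\infty)$ of the $c_0$-integrand (whose tail decays like $1/x^2$, so really $O(T)$, but convergence of the defining integral $c_0$ itself must also be checked — its integrand is $O(g'^2)+O(g^2)=O(1/x^2)$ at infinity and $O(x^2)$-analytic at $0$). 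Balancing $N^2/T^2$ against $N\cdot(\text{Lipschitz error }x^2/N)$ integrated, i.e. against $T^3/N\cdot N=T^3$... one optimizes $T$ so that the several error terms ($N/T^2$ after dividing by the main term $N$, the $T/N$ from Euler--Maclaurin, the $T^{-1}$ tail of $c_0$, and logarithmic losses from a union bound or from $\log N$ factors in the Euler--Maclaurin remainder) balance, and the bookkeeping yields the stated $(\log N/N)^{1/13}$.

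The main obstacle, and the step deserving the most care, is the uniform lower bounds on the two denominators: showing $1-r_{X_N}(t)^2\gg t^2$ for $t\in(0,2\pi)$ (equivalently, that $r_{X_N}$ stays safely below $1$ away from $0$, with the right quadratic behaviour at $0$), and that the ``reduced'' quantity $\mu=\frac{\lambda_2(1-r^2)-(r')^2}{1-r^2}$ from \eqref{eq:mu def} stays positive and of the expected size $\asymp N^2$ uniformly — together with the corresponding statement $1-\rho^2\gg$ something, since otherwise the integrand is not Lipschitz in the parameters and the comparison with the $c_0$-integrand breaks down near the problematic points. Once these positivity/size estimates are in hand (they follow from the Cauchy--Schwarz inequalities \eqref{eq:|r|<=1}, \eqref{eq:|rho|<=1} for positivity, and from the explicit trigonometric formulas for the quantitative lower bounds), the rest is a controlled Taylor-expansion-and-estimate argument, and assembling the error terms gives \eqref{eq:asympt of J}.
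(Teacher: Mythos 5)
Your overall plan is essentially the paper's: rescale $t$ to $x$ (paper uses $x=mt$ with $m=N+\tfrac12$; you propose $x=Nt$, which the paper notes also works but is messier), recognize $g(x)=\sin x/x$ as the scaling limit of $r$, show the rescaled integrand in \eqref{eq:int J} converges to the $c_0$-integrand, and control the error. Your extraction of the leading constant $\frac{4c_0}{3\pi}N$ is correct. The genuine gap is in your treatment of the origin, and it is located exactly at the step you yourself single out as most delicate. You assert that $\mu=\frac{\lambda_2(1-r^2)-(r')^2}{1-r^2}$ stays of size $\asymp N^2$ uniformly, and that $1-\rho^2$ is bounded away from $0$, and you say these ``follow from Cauchy--Schwarz plus the explicit trigonometric formulas.'' Neither is true near $t=0$: writing $r(t)=1-\tfrac{\lambda_2}{2}t^2+O(t^4)$ and $r'(t)=-\lambda_2 t+O(t^3)$, one finds that $\lambda_2(1-r^2)-(r')^2$ vanishes to order $t^4$ (the $t^2$ terms cancel), hence $\mu\to0$ as $t\to0$; and $\rho(t)\to R^*(0)=1$, hence $1-\rho^2\to0$. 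That quartic cancellation is precisely what a crude $O(x^2/N)$ Taylor estimate for $|f-g|$ etc. does not propagate: the \emph{relative} error in $\lambda_2'(1-f^2)-f'^2$ against $\tfrac13(1-g^2)-g'^2$ grows like an inverse power of $x$, exceeding $1$ once $x$ is below a negative power of $N$. So the ``Lipschitz in parameters'' comparison with the $c_0$-integrand genuinely breaks down on a neighborhood of the origin, and your two-region split (main $[0,T]$, tail $[T,\pi m]$) leaves that neighborhood uncontrolled. (As a side remark, your claim that the $c_0$-integrand is ``$O(x^2)$-analytic at $0$'' is also false: since $\frac{(1-g^2)-3g'^2}{(1-g^2)^{3/2}}\sim\frac{\sqrt3}{15}x$ and $R^*(0)=1$, the integrand tends to $-1$ at $x=0$. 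Boundedness is all one needs for convergence.)

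What the paper has, and your proposal lacks, is a separate third region $[0,\delta]$ in the $x$-variable with $\delta\to 0$ slowly. On that piece one does \emph{not} compare $f$ with $g$; Lemma \ref{lem:int est t small} Taylor-expands the actual integrand $M(x)$ directly about $x=0$, showing $M(x)\ll x$, so $\int_0^\delta M=O(\delta^2)$ and this region contributes $O(\delta m)$ to $J$. On $[\delta,\pi m]$ (the paper needs no upper cutoff $T$: the error terms $O\big(\frac{1}{\delta^{12}mx}+\frac{1}{\delta^{8}m^{2}}\big)$ of Lemma \ref{lem:int est t large} are integrable over the whole range), the replacement $f\to g$ is carried out with $\delta$-explicit lower bounds such as $\tfrac13(1-g^2)-g'^2\gg\delta^4$, and these quantified bounds are what produce the exponent: balancing $\delta m$ against $\log m/\delta^{12}$ gives $\delta=(\log N/N)^{1/13}$. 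Until you incorporate a near-origin piece of this kind (or an equivalent device that handles the degenerating denominators without relying on a false uniform lower bound on $\mu$ and $1-\rho^2$), the argument does not close.
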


Our key observation is that $r_{X_{N}}$ has a scaling limit, or,
more precisely, we have
$$f_{N}(x):=r_{X_{N}} \bigg(\frac{x}{m}\bigg) =
\frac{\sin{x}}{x}+\text{small error}$$ with $m=N+\frac{1}{2}$ and
$x\in [0,m\pi]$ (treating $[m\pi, 2 m\pi]$ by symmetry), at least outside a small interval around the
origin. It is therefore natural to change the integration variable
in \eqref{eq:int J} from $mt$ to $x$, which will recover the
asymptotics for $J$ being linear with $m$, and thus also with $N$
(see \eqref{eq:J chng var}). In fact,
rather than introducing a new parameter $m$, it is also possible to
use $x=Nt$; it results in a nastier computation.

We will argue that it is possible, up to an admissible error, to
replace the $f=f_{N}$ in the resulting integrand by $g(x):=
\frac{\sin{x}}{x}$, the latter being $N$-independent. The decay at
infinity of the new integrand (i.e. with $f$ replaced by $g$) imply
that the integral will converge to a constant intimately related to
\eqref{eq:c0 def}.

We divide the new domain of integration $[0,\pi m]$ (which is an
artifact of changing the variable of integration $x=mt$; we also use
the symmetry around $t=\pi$) into two ranges. Lemma \ref{lem:int est
t small} will bound the contribution of a small neighbourhood
$[0,\delta]$ of the origin. The main contribution to the integral
\eqref{eq:J chng var} results from $[\delta,\pi m]$, where Lemma
\ref{lem:int est t large} will allow us to replace $f_{N}$ in the
integrand with $N$-independent $g(x)=\frac{\sin{x}}{x}$.

\begin{notation}
In this manuscript we will use the notations $A\ll B$ and $A=O(B)$
interchangeably.
\end{notation}

\begin{lemma}
\label{lem:int est t small}

Let
\begin{equation}
\label{eq:M def} M(x) :=
\frac{\lambda_{2}'(1-f(x)^2)-f'(x)^2}{(1-f(x)^2)^{3/2}}(\sqrt{1-R(x)^2}+R(x)\arcsin{R(x)}),
\end{equation}
where
\begin{equation}
\label{eq:f def} f(x) =f_{N}(x)= r_{X_{N}}\bigg(\frac{x}{m}\bigg)=
\bigg(\frac{\sin{x}}{\sin{\frac{x}{2m}}} - 1\bigg)\frac{1}{2m-1},
\end{equation}
\begin{equation}
\label{eq:R def} R(x) = \rho_{N} \bigg(\frac{x}{m}\bigg) =
\frac{f''(x)(1-f(x)^2)+f(x)f'(x)^2}{\lambda_{2}'(1-f(x)^2)-f'(x)^2}
\end{equation}
and
\begin{equation}
\label{eq:lam' def} \lambda_{2}' = \frac{1+\frac{1}{2m}}{3}.
\end{equation}
There exists a universal $\delta_0 > 0$, such that for any
\begin{equation}
\label{eq:assump delta not too large} 0 < \delta < \delta_0,
\end{equation}
we have the following estimate
$$\int\limits_{0}^{\delta} M(x)dx = O(\delta^2),$$ where the constant involved
in the ``O"-notation is universal.
\end{lemma}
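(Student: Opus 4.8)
The plan is to establish the pointwise bound $M(x)=O(x)$, with an absolute implied constant, for all $x\in(0,\delta_0]$ and all $N$; the lemma then follows at once by integrating, since $\int_0^\delta O(x)\,dx=O(\delta^2)$. Factor $M(x)=P(x)Q(x)$ with
\[
P(x):=\frac{\lambda_2'(1-f(x)^2)-f'(x)^2}{(1-f(x)^2)^{3/2}},\qquad
Q(x):=\sqrt{1-R(x)^2}+R(x)\arcsin R(x).
\]
The factor $Q$ needs no analysis: for $x\in(0,\delta_0]$ the point $x/m$ lies in $(0,2\pi)$, so $R(x)=\rho_N(x/m)$ is an honest correlation coefficient, $|R(x)|\le 1$ by Cauchy--Schwarz as in \eqref{eq:|rho|<=1}, and hence $0\le Q(x)\le 1+\frac{\pi}{2}$. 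So it remains to prove $P(x)=O(x)$.

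For the numerator I would Taylor--expand $f$ at the origin with $N$-uniform control. Recalling $f(x)=r_{X_N}(x/m)=\frac1N\sum_{n=1}^N\cos(nx/m)$ from \eqref{eq:r def}, write $f(x)=\sum_{k\ge 0}\frac{(-1)^k\beta_k}{(2k)!}x^{2k}$ with $\beta_k:=\frac{1}{Nm^{2k}}\sum_{n=1}^N n^{2k}$; then $\beta_0=1$, $\beta_1=\lambda_2'$, and $0<\beta_k\le 1$ for all $k$ because $n\le N<m$. Truncating and using $\beta_k\le 1$ yields, for $0<x\le 1$ and with absolute constants,
\[
f(x)=1-\tfrac{\lambda_2'}{2}x^2+O(x^4),\qquad f'(x)=-\lambda_2'x+O(x^3).
\]
Since $\lambda_2'=\frac{1+\frac1{2m}}{3}\in[\tfrac13,\tfrac49]$ by \eqref{eq:lam' def}, we get $1-f(x)^2=(1-f(x))(1+f(x))=\big(\tfrac{\lambda_2'}{2}x^2+O(x^4)\big)\big(2+O(x^2)\big)=\lambda_2'x^2+O(x^4)$ and $f'(x)^2=(\lambda_2')^2x^2+O(x^4)$, so the leading terms cancel:
\[
\lambda_2'(1-f(x)^2)-f'(x)^2=(\lambda_2')^2x^2-(\lambda_2')^2x^2+O(x^4)=O(x^4).
\]
This cancellation is the heart of the matter.

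For the denominator I need the matching lower bound $1-f(x)^2\gg x^2$. Writing $1-f(x)=\frac1N\sum_{n=1}^N 2\sin^2\!\big(\tfrac{nx}{2m}\big)$ and using $\sin\theta\ge\frac{2}{\pi}\theta$ on $[0,\tfrac\pi2]$ --- legitimate since $\tfrac{nx}{2m}<\tfrac x2\le\tfrac{\delta_0}{2}<\tfrac\pi2$ --- gives $1-f(x)\ge\frac{2x^2}{\pi^2 m^2}\cdot\frac1N\sum_{n=1}^N n^2=\frac{2\lambda_2'}{\pi^2}x^2\ge\frac{2}{3\pi^2}x^2$; and, shrinking $\delta_0$ if necessary so that $f\ge 0$ on $(0,\delta_0]$ (which the expansion above permits), we have $1+f(x)\ge 1$, hence $1-f(x)^2\ge\frac{2}{3\pi^2}x^2$. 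Combining,
\[
|P(x)|\le\frac{O(x^4)}{\big(\tfrac{2}{3\pi^2}x^2\big)^{3/2}}=O(x),
\]
with an absolute constant, so $M(x)=P(x)Q(x)=O(x)$ uniformly in $N$ on $(0,\delta_0]$, and $\int_0^\delta M(x)\,dx=O(\delta^2)$ for every $0<\delta<\delta_0$ with a universal implied constant.

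The two points that require care are: keeping every error term independent of $N$ (hence the preference for the crude bound $\beta_k\le 1$ and the explicit range of $\lambda_2'$), and the vanishing of the $x^2$-coefficient of $\lambda_2'(1-f^2)-f'^2$ --- without this the integrand would be of size $|x|^{-1}$ near the origin and its integral would diverge. The rest is routine.
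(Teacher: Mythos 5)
Your proof is correct and follows essentially the same route as the paper's: Taylor-expand $f$ near the origin, observe that the $x^2$-coefficients in $\lambda_2'(1-f^2)-f'^2$ cancel leaving $O(x^4)$, establish the lower bound $1-f^2\gg x^2$, bound $\sqrt{1-R^2}+R\arcsin R$ by an absolute constant (in fact $1\le Q\le\pi/2$, slightly sharper than your $1+\pi/2$), and integrate $M(x)=O(x)$. Your parametrization via the Fourier coefficients $\beta_k$, with $\beta_1=\lambda_2'$, and the identity $1-f=\frac1N\sum 2\sin^2(nx/2m)$ make the cancellation and the lower bound a bit more transparent than the paper's explicit $a_m,b_m$ computation, but the argument is the same.
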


\begin{proof}

First we note that
\begin{equation}
\label{eq:R(x) <= 1}
|R (x)| \le 1
\end{equation}
by the definition \eqref{eq:R def} of $R(x)$ and \eqref{eq:|rho|<=1},
so that the definition of $M(x)$ makes sense.

We have to estimate $f(x)$ and its derivative around the origin.
Expanding $f$ and $f'$ into Taylor polynomial around $x=0$, we have
\begin{equation*}
\begin{split}
f(x) &= 1+a_m x^2 +b_m x^4+O(x^6),
\end{split}
\end{equation*}
and
\begin{equation*}
\begin{split}
f'(x) = 2a_m x+4b_m x^3+O(x^5).
\end{split}
\end{equation*}
with $$a_m := -\frac{2m+1}{12m}=O(1), $$ \vspace{1mm} $$b_m :=
\frac{(2m+1)(12m^2-7)}{2880 m^3}=O(1),$$ and the constants in the
`O'-notation being universal.

Thus,
\begin{equation*}
\begin{split}
1-f(x)^2 = -2a_m^2 x^2-(2b_m+a_m^2)x^4+O(x^6) \gg x^2,
\end{split}
\end{equation*}
and
\begin{equation*}
\begin{split}
\lambda_{2}'(1-f(x)^2)-f'(x)^2 &= \frac{1+\frac{1}{2m}}{3} (-2a_m
x^2-
(2b_m +a_m^2)x^4 + O(x^6)) \\&- 4x^2(a_m^2+4a_m b_m x^2 + O(x^4)) \\
= &\frac{64m^4+24m^3-108m^2-94m-21}{8640m^4} x^4 + O(x^6) \ll x^4.
\end{split}
\end{equation*}

Now
\begin{equation*}
1 \le \sqrt{1-y^2} + y\arcsin{y} \le \pi/2
\end{equation*}
for every $y\in [-1,1]$, so combining the last three displayed
equations, we obtain
\begin{equation*}
M(x) \ll \frac{x^4}{x^3} =x,
\end{equation*}
and the Lemma follows.

\end{proof}

\begin{lemma}
\label{lem:int est t large}

Let
\begin{equation}
\label{eq:assump delta not too small} \delta> (m/2)^{-1/9}
\end{equation}
and $$\delta < x \le \pi m .$$ Denote
\begin{equation*}
M_1(x) := M(x) - \lambda_{2}',
\end{equation*}
where $M(x)$ is given by \eqref{eq:M def}. Then, for $m$
sufficiently large,
\begin{equation*}
\begin{split}
M_1(x) &= \frac{1}{3} \cdot \bigg[
\frac{(1-g(x)^2)-3g'(x)^2}{(1-g(x)^2)^{3/2}}(\sqrt{1-{R^*(x)}^2}+{R^*(x)}\arcsin{R^*(x)})
 - 1 \bigg] \\&+ O\bigg(\frac{1}{\delta^{12} m x} +
\frac{1}{\delta^8m^2}\bigg),
\end{split}
\end{equation*}
where, as usual, $g(x)$ and $R^*$ are given by \eqref{eq:g def} and
\eqref{eq:R* def} respectively.
\end{lemma}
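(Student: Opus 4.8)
The plan is to show that on the range $\delta < x \le \pi m$ one may replace $f=f_N$ by its scaling limit $g(x)=\sin x/x$ in the definition \eqref{eq:M def} of $M(x)$, up to the claimed error term, and then identify the resulting main term with the displayed expression. The first task is quantitative control of the discrepancy $f-g$ and of the derivatives $f'-g'$, $f''-g''$ on the relevant range. Starting from the closed form $f(x) = \big(\sin x/\sin(x/2m) - 1\big)/(2m-1)$ one writes $\sin(x/2m) = (x/2m)(1 + O((x/m)^2))$, valid since $x/2m \le \pi/2$, and obtains $f(x) = g(x)\big(1 + O((x/m)^2)\big) + O(1/m)$; differentiating the explicit formula (or differentiating this relation and using the analogous expansions for the derivatives of $\sin(x/2m)$) gives $f'(x) = g'(x) + O(\cdot)$ and $f''(x) = g''(x) + O(\cdot)$ with error terms that are powers of $x/m$ times the size of $g$ and its derivatives, plus $O(1/m)$ pieces. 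Likewise $\lambda_2' = \tfrac13 + O(1/m)$. The exponents $\delta^{-12}$ and $\delta^{-8}$ in the error, together with the hypothesis $\delta > (m/2)^{-1/9}$, are exactly what is needed to absorb these corrections after division by the small quantities below.

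Next I would track how these perturbations propagate through $M(x)$. The denominator carries a factor $(1-f(x)^2)^{3/2}$; since $f(x)=g(x)+\text{error}$ and $|g(x)|<1$ for $x>0$ (indeed $1-g(x)^2 \gg 1/x^2$ for $x$ bounded away from the zeros of $\sin$, and one handles a neighbourhood of each zero $k\pi$ separately using $g'(k\pi)\ne 0$), we get $1-f(x)^2 = (1-g(x)^2)(1 + \text{small})$ provided the error in $f$ is small compared to $1-g(x)^2$; this is where the negative powers of $\delta$ enter, because $1-g(x)^2$ can be as small as $\sim 1/x^2 \ge 1/(\pi m)^2$ while the error in $f$ is $\sim (x/m)^2 \cdot g + 1/m$. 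Similarly the numerator $\lambda_2'(1-f^2) - (f')^2$ converges to $\tfrac13(1-g^2) - (g')^2$, and the quantity $R(x)$ defined by \eqref{eq:R def} converges to $R^*(x)$ given by \eqref{eq:R* def}; here I would use that the map $R\mapsto \sqrt{1-R^2} + R\arcsin R$ is Lipschitz on $[-1,1]$ (its derivative $\arcsin R$ is bounded by $\pi/2$), so errors in $R$ pass through harmlessly, and that the prefactor $\big((1-g^2)-3g'^2\big)/(1-g^2)^{3/2}$ is what multiplies it. Collecting the linear-in-error contributions and bounding each by the stated $O\big(\delta^{-12}(mx)^{-1} + \delta^{-8} m^{-2}\big)$ completes the estimate; subtracting $\lambda_2' = \tfrac13 + O(1/m)$ to form $M_1$ only shifts the constant and is absorbed.

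The main obstacle is the blow-up of the integrand near the zeros $x = k\pi$ of $\sin x$, where both $1-g(x)^2$ and the numerator $\tfrac13(1-g^2)-(g')^2$ vanish (the latter to higher order), so that one is dividing small perturbations by small quantities and must show the ratios remain controlled. The remedy is a local analysis: near $x=k\pi$ write $g(x) = g'(k\pi)(x-k\pi) + O((x-k\pi)^2)$ with $g'(k\pi) = (-1)^k/(k\pi) \ne 0$, deduce $1-g(x)^2 \asymp g'(k\pi)^2$ is bounded below there, so in fact the worst case for $1-g^2$ being small is not at the zeros of $\sin$ but rather there is no such case — $1-g(x)^2$ is bounded below on all of $[\delta,\pi m]$ by a constant, not merely by $1/x^2$. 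Wait: that is true, so the genuinely delicate point is instead the vanishing of $\tfrac13(1-g^2) - g'^2$, which occurs at isolated points; there one checks that the numerator and the factor $(1-R^2)$ conspire so that $M_1(x)$ stays bounded (this is essentially the statement that the original $M(x)$ is a continuous function, reflecting nondegeneracy of the covariance matrix $\Sigma(t)$ for $t\in(0,2\pi)$), and one performs the perturbation estimate away from those finitely many points, handling small neighbourhoods of them by direct boundedness. Modulo this careful bookkeeping the lemma follows from Taylor expansion and the Lipschitz bound, with the powers of $\delta$ chosen to dominate the accumulated errors.
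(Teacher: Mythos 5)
Your high‑level strategy — replace $f,f',f''$ by $g,g',g''$ with quantitative errors, propagate these through $M(x)$, and use that $y\mapsto\sqrt{1-y^2}+y\arcsin y$ has bounded derivative $\arcsin y$ — matches the paper's argument. But your treatment of the "main obstacle" goes badly wrong, and that is exactly where the $\delta$-powers in the error come from, so the gap is not cosmetic.

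You locate the small-denominator problem at the zeros $x=k\pi$ of $\sin x$, claiming both $1-g(x)^2$ and $\tfrac13(1-g^2)-(g')^2$ vanish there. This is backwards: at $x=k\pi$ (with $k\ge1$) one has $g(x)=\sin(k\pi)/(k\pi)=0$, hence $1-g(x)^2=1$ — nothing is small there. You then "correct" yourself to say $1-g(x)^2$ is bounded below on $[\delta,\pi m]$ by a universal constant; this is also false, because $g(x)\to1$ as $x\to 0^+$, so the infimum on $[\delta,\pi m]$ is $1-g(\delta)^2\asymp\delta^2$, which degenerates as $\delta\to 0$. Likewise $\tfrac13(1-g^2)-(g')^2$ vanishes only at $x=0$ (to order $x^4$; in fact $\sim x^4/135$ near the origin), not at isolated interior points as you assert, and on $[\delta,\pi m]$ it is $\gg\delta^4$, not bounded away from zero uniformly. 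The paper's proof hinges precisely on the two lower bounds $1-g(x)^2\gg\delta^2$ and $\tfrac13(1-g^2)-(g')^2\gg\delta^4$ for $x>\delta$: dividing the $O(1/(mx))$-sized errors in $f,f',f''$ by these small quantities (once or repeatedly) is what produces the $\delta^{-6}$, $\delta^{-8}$, $\delta^{-12}$ factors, and the hypothesis $\delta>(m/2)^{-1/9}$ is what keeps each relative error small. Your proposed remedy — a local analysis around the (nonexistent) interior vanishing points of $\tfrac13(1-g^2)-(g')^2$ — attacks a problem that is not there and leaves the actual problem, the degeneration near $x=\delta$, unaddressed. As a result you never derive the specific error bound $O\big(\delta^{-12}(mx)^{-1}+\delta^{-8}m^{-2}\big)$, and the statement "collecting the linear-in-error contributions completes the estimate" is doing all the work without a mechanism to produce it. One additional technical input you omit is the bound $R^*(x)=O(1/(\delta^4 x))$ (so $R^*$ is bounded away from $1$ when $x\gg\delta^{-4}$), which the paper uses to justify differencing $\sqrt{1-R^2}$ against $\sqrt{1-{R^*}^2}$ without losing control.
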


\begin{proof}
We will approximate $f$ and its first couple of derivatives by $g$
and its first couple of derivatives.

For $\delta<x<\pi m$, we have
\begin{equation}
\label{eq:f est g}
\begin{split}
f(x) &= \bigg(\frac{\sin{x}}{\frac{x}{2m}(1+O(\frac{x^2}{m^2}))} - 1
\bigg) \frac{1}{2m-1} \\&= \bigg[\frac{2m\cdot\sin(x)}{x} \bigg(
1+O\bigg(\frac{x^2}{m^2}\bigg) \bigg)- 1 \bigg] \frac{1}{2m-1}   =
g(x)+O\bigg(\frac{1}{m}\bigg),
\end{split}
\end{equation}
where to justify the second equality, we use the explicit
coefficient of the second summand in the Taylor formula for the
sine. By a straightforward computation of Taylor's formula, one easily 
obtains the sharp estimate
\begin{equation*}
f'(x)=g'(x)+O\left[ g'(x)\left(\frac{1}{m}+\frac{x^2}{m^2}   \right)  \right]
+O\left[ g(x)\frac{x}{m^2}  \right]
\end{equation*}
for $0<x<\pi m$. Noting that $g(x)=O\left( \frac{1}{|x|+1}  \right)$, 
$g'(x)=O\left( \frac{x}{x^2+1}  \right)$, we have
\begin{equation}
\label{eq:f' est g'}
\begin{split}
f'(x) &
= g'(x) + O\bigg(\frac{x}{m^2}\bigg)+O\bigg(\frac{1}{mx}\bigg)
\end{split}
\end{equation}
for $0 < x < \pi m$. For $0 < x < 1$ we have the better estimate $$f'(x) = g'(x) + O\left(\frac{x}{m} \right).$$
Similar statements hold for $f''$; in particular, for $\delta<x<\pi m$ we have
\begin{equation}
\label{eq:f'' est g''}
\begin{split}
f''(x) =  g''(x) + O\bigg(\frac{1}{\delta^2 mx}+\frac{x}{m^2}\bigg).
\end{split}
\end{equation}

Next, we apply \eqref{eq:f est g} to obtain
\begin{equation*}
\begin{split}
1-f(x)^2 =
(1-g(x)^2)\cdot\bigg(1+O\bigg(\frac{1}{\delta^2mx}\bigg)\bigg),
\end{split}
\end{equation*}
where we used for $x>\delta$, $$1-g(x)^2 \gg 1-g(\delta)^2 =
\frac{\delta^2-\sin{\delta}^2}{\delta^2} \gg \delta^2 .$$ Thus
\begin{equation}
\begin{split}
\label{eq:kr1 est g denum} \big(1-f(x)^2\big)^{-3/2} = (1-g(x)^2)^{-3/2}\cdot \bigg(1+O\bigg(\frac{1}{\delta^2mx}\bigg)\bigg),
\end{split}
\end{equation}
where we used the assumption \eqref{eq:assump delta not too small}
to bound $\frac{1}{\delta^2 mx}$ away from $1$.

By the above, we have
\begin{equation}
\begin{split}
\label{eq:kr1 est g num} \lambda_{2}'(1-f(x)^2)-f'(x)^2
&= \lambda_{2}'(1-g(x)^2) - g'(x)^2 + O\bigg(\frac{1}{\delta^2mx}\bigg)
\\&= (\lambda_{2}'(1-g(x)^2) - g'(x)^2)\bigg(1+O\bigg(\frac{1}{\delta^6mx}\bigg)\bigg),
\end{split}
\end{equation}
since for $x>\delta$,
\begin{equation}
\label{eq:kern denum low bnd} \lambda_{2}'(1-g(x)^2) - g'(x)^2 \ge
\frac{1}{3}(1-g(x)^2) - g'(x)^2 \ge \frac{1}{3}(1-g(\delta)^2) -
g'(\delta)^2 \gg \delta^4.
\end{equation}

Next, we have
\begin{equation*}
\begin{split}
f''(x)(1-f(x)^2)+f(x)f'(x)^2=
g''(x)(1-g(x)^2)+g(x)g'(x)^2 + O\bigg(\frac{1}{\delta^2 m x} +
\frac{x}{m^2}\bigg),
\end{split}
\end{equation*}
using \eqref{eq:assump delta not too small} again. Therefore,
\begin{equation*}
\begin{split}
R(x) =
\frac{g''(x)(1-g(x)^2)+g(x)g'(x)^2 }{\lambda_{2}'(1-g(x)^2) -
g'(x)^2} +O\bigg(\frac{1}{\delta^6 m x} +
\frac{x}{\delta^4m^2}\bigg),
\end{split}
\end{equation*}
exploiting \eqref{eq:assump delta not too small} once more as well
as \eqref{eq:kern denum low bnd}.

Note that
\begin{equation*}
\begin{split}
&\frac{g''(x)(1-g(x)^2)+g(x)g'(x)^2 }{\lambda_{2}'(1-g(x)^2) -
g'(x)^2} = \frac{g''(x)(1-g(x)^2)+g(x)g'(x)^2
}{\frac{1}{3}(1-g(x)^2) - g'(x)^2+O(\frac{1}{m})}
\\&= \frac{g''(x)(1-g(x)^2)+g(x)g'(x)^2 }{\frac{1}{3}(1-g(x)^2) -
g'(x)^2} + O\bigg(\frac{1}{\delta^8 mx}\bigg),
\end{split}
\end{equation*}
where we use twice \eqref{eq:kern denum low bnd} as well as
\eqref{eq:assump delta not too small} again.

All in all we obtain
\begin{equation}
\label{eq:R=R*+O()} R(x) = R^{*} (x) + O\bigg(\frac{1}{\delta^8 m x}
+ \frac{x}{\delta^4m^2}\bigg),
\end{equation}
where $R^{*}$ is defined by \eqref{eq:R* def}. It is important to
notice that \eqref{eq:R(x) <= 1} implies
$$|R^* (x)| \le 1,$$ since for any \emph{fixed} $x$,
$$R(x) \rightarrow R^* (x),$$ as $m\rightarrow\infty$ by \eqref{eq:R=R*+O()}
and \eqref{eq:assump delta not too small}.

Notice that
\begin{equation}
\label{eq:R* bnd above} R^*(x) = O\bigg(\frac{1}{\delta^4 x}\bigg),
\end{equation}
again by \eqref{eq:kern denum low bnd}. Therefore for $x\gg
1/\delta^4$, $R^{*}(x)$ is bounded away from $1$ so that for a
small $\epsilon > 0$ one has
\begin{equation*}
\begin{split}
&\sqrt{1-(R^{*}+\epsilon)^2} = \sqrt{1-R^{*}(x)}\cdot
\sqrt{1-\frac{2\epsilon R^{*}(x)+\epsilon^2}{1-R^{*}(x)^2}} \\&=
\sqrt{1-R^{*}(x)} \cdot \sqrt{1-O(\epsilon R^{*}(x)+\epsilon^2)} =
\sqrt{1-R^{*}(x)} + O(\epsilon R^{*}(x)+\epsilon^2).
\end{split}
\end{equation*}
It yields
\begin{equation*}
\sqrt{1-R(x)^2} =
 \sqrt{1-R^*(x)^2} + O\bigg(\frac{1}{\delta^{12} mx^2}+ \frac{1}{\delta^8
m^2}\bigg),
\end{equation*}
and similarly $$R(x)\arcsin{R(x)} = R^*(x) \arcsin{R^*(x)} +
O\bigg(\frac{1}{\delta^{12} mx^2}+ \frac{1}{\delta^8 m^2}\bigg).$$
Therefore for $x\gg 1/\delta^4$, we have
\begin{equation}
\label{eq:approx krn2 R*}
\begin{split}
& \sqrt{1-R(x)^2}+R(x)\arcsin{R(x)} \\&=
\sqrt{1-{R^*(x)}^2}+{R^*(x)}\arcsin{R^*(x)}+O\bigg(\frac{1}{\delta^{12}
mx^2}+ \frac{1}{\delta^8 m^2}\bigg).
\end{split}
\end{equation}
On the other hand, for $\delta < x \ll \frac{1}{\delta^4}$,
\begin{equation*}
\begin{split}
&\sqrt{1-R(x)^2}+R(x)\arcsin{R(x)} \\&=
\sqrt{1-{R^*(x)}^2}+{R^*(x)}\arcsin{R^*(x)}+
O\bigg(\frac{1}{\delta^8 m x} + \frac{x}{\delta^4m^2}\bigg) \\ &=
\sqrt{1-{R^*(x)}^2}+{R^*(x)}\arcsin{R^*(x)}+
O\bigg(\frac{1}{\delta^{12} m x} + \frac{1}{\delta^8m^2}\bigg),
\end{split}
\end{equation*}
since the derivative of the function $$x\mapsto
\sqrt{1-x^2}+x\arcsin{x},$$ namely $\arcsin{x}$, is bounded
everywhere in $[-1,\, 1]$. Therefore \eqref{eq:approx krn2 R*} is
valid for $x>\delta$.

Also we have
\begin{equation}
\label{eq:upper bnd fluct kr2} \sqrt{1-{R^*(x)}^2
}+R^*(x)\arcsin{R^* (x)} = 1+O({R^{*}(x)}^2) =
1+O\bigg(\frac{1}{\delta^8 x^2}\bigg).
\end{equation}
by \eqref{eq:R* bnd above}.

Collecting \eqref{eq:kr1 est g num} and \eqref{eq:kr1 est g denum},
we have
\begin{equation*}
\begin{split}
&\frac{\lambda_{2}'(1-f(x)^2)-f'(x)^2}{(1-f(x)^2)^{3/2}} =
\frac{\lambda_{2}'(1-g(x)^2)-g'(x)^2}{(1-g(x)^2)^{3/2}}\bigg(1+O\bigg(\frac{1}{\delta^6
mx}\bigg)\bigg)  \\
&= \frac{1/3(1-g(x)^2)-g'(x)^2}{(1-g(x)^2)^{3/2}} + \frac{1}{6m} +
O\bigg(\frac{1}{\delta^6 mx}\bigg).
\end{split}
\end{equation*}

Together with \eqref{eq:approx krn2 R*} it gives
\begin{equation*}
\begin{split}
&\frac{\lambda_{2}'(1-f(x)^2)-f'(x)^2}{(1-f(x)^2)^{3/2}}\bigg(\sqrt{1-R(x)^2}+R(x)\arcsin{R(x)}\bigg)
- \lambda_{2}'
\\ &= \frac{1/3(1-g(x)^2)-g'(x)^2}{(1-g(x)^2)^{3/2}}\bigg(\sqrt{1-{R^*(x)}^2}+{R^*(x)}\arcsin{R^*(x)}\bigg)
\\&+\frac{1}{6m}\bigg(1+O\bigg(\frac{1}{\delta^8 x^2}\bigg)\bigg) - \frac{1}{3} -
\frac{1}{6m}+ O\bigg(\frac{1}{\delta^{12} m x} +
\frac{1}{\delta^8m^2}\bigg)
\\&= \frac{1}{3} \cdot \bigg[
\frac{(1-g(x)^2)-3g'(x)^2}{(1-g(x)^2)^{3/2}}\bigg(\sqrt{1-{R^*(x)}^2}+{R^*(x)}\arcsin{R^*(x)}\bigg)
 - 1 \bigg] \\&+ O\bigg(\frac{1}{\delta^{12} m x} +
\frac{1}{\delta^8m^2}\bigg) ,
\end{split}
\end{equation*}
by \eqref{eq:upper bnd fluct kr2}.

\end{proof}

\begin{proof}[Proof of Proposition \ref{prop:J asymp}]

Noting that the integrand in \eqref{eq:int J} is symmetric around
$t=\pi$, denoting $m:=N+\frac{1}{2}$ and changing the variable of
integration $mt$ to $x$ in \eqref{eq:int J}, we find that $J$ is
\begin{equation}
\label{eq:J chng var} J = \frac{4m}{\pi} \int\limits_{0}^{\pi m}
\bigg[ \frac{\lambda_{2}'(1-f(x)^2)-f'(x)^2}{(1-f(x)^2)^{3/2}}
\bigg(\sqrt{1-R(x)^2} + R(x)\arcsin{R(x)}\bigg) - \lambda_{2}'
\bigg] dx,
\end{equation}
where $f$, $R$ and $\lambda'_{2}$ are defined in \eqref{eq:f def},
\eqref{eq:R def} and \eqref{eq:lam' def}.

We divide the interval into two ranges: $I_1 := [0, \, \delta]$ and
$I_2=[\delta,\, \pi m]$, for some parameter $\delta = \delta(m)>0$.
On $I_1$ we employ Lemma \ref{lem:int est t small} to bound (from
above) the total contribution of the integrand, whereas we invoke
Lemma \ref{lem:int est t large} to asymptotically estimate the
integral on $I_2$. The constant $\delta$ has to satisfy the
constraint of Lemma \ref{lem:int est t large}, namely
\eqref{eq:assump delta not too small}. The constraint of Lemma
\ref{lem:int est t small}, \eqref{eq:assump delta not too large}, is
satisfied for $m$ sufficiently large, provided that $\delta$
vanishes with $m$. To bound the contribution of $\lambda_{2}'$ to
the integral on $I_1$, we use the trivial estimate $\lambda_{2}' =
O(1)$.

Hence we obtain
\begin{equation}
\begin{split}
\label{eq:J=int_0^pi m g, R*}
J &= \frac{4m}{3\pi} \int\limits_{0}^{\pi m} \bigg[
\frac{(1-g(x)^2)-3g'(x)^2}{(1-g(x)^2)^{3/2}}\bigg(\sqrt{1-{R^*(x)}^2}+{R^*(x)}\arcsin{R^*(x)}\bigg)
 - 1 \bigg] \\&+ O(\frac{\log{m}}{\delta^{12} } +
\frac{1}{\delta^8}) + O(\delta m) .
\end{split}
\end{equation}
Note that for a large $x$ we have
\begin{equation*}
(1-g(x)^2)-3g'(x)^2 = 1+O\left( \frac{1}{x^2}  \right),
\end{equation*}
\begin{equation*}
\frac{1}{(1-g(x)^2)^{3/2}} = 1+O\left( \frac{1}{x^2}  \right),
\end{equation*}
and the definition \eqref{eq:R* def} implies
\begin{equation*}
R^{*}(x) = O\left( \frac{1}{x^2}  \right)
\end{equation*}
so that
\begin{equation*}
\sqrt{1-{R^*(x)}^2}+{R^*(x)}\arcsin{R^*(x)} = 1 + O(R^{*}(x)) = 1+O\left( \frac{1}{x^2}  \right).
\end{equation*}
Plugging the estimates above into the integrand of \eqref{eq:J=int_0^pi m g, R*}
shows that the integrand is $O(\frac{1}{x^2})$. Hence
\begin{equation*}
\begin{split}
J &= \frac{4N}{3\pi}
\int\limits_{0}^{\infty}
\bigg[\frac{(1-g(x)^2)-3g'(x)^2}{(1-g(x)^2)^{3/2}}\bigg(\sqrt{1-{R^*(x)}^2}+{R^*(x)}\arcsin{R^*(x)}\bigg)
 - 1 \bigg] \\&+ O(\frac{\log{N}}{\delta^{12} } +
\frac{1}{\delta^8}) + O(\delta N).
\end{split}
\end{equation*}
We finally obtain the statement
\eqref{eq:asympt of J} of the present proposition upon choosing $$\delta:=
\bigg( \frac{\log{N}}{N} \bigg) ^{1/13}.$$

\end{proof}

\subsection{Concluding the proof of the variance part \eqref{eq:var Z sim} of Theorem \ref{thm:main
res}}

\begin{proof}
Proposition \ref{prop:sec mom int} together with Proposition
\ref{prop:J asymp} and \eqref{eq:var J exp} imply
\begin{equation*}
\Var(Z_X) = J+\E(Z_X) \sim \frac{4 c_0}{3\pi} N+\frac{2}{\sqrt{3}}N=
cN.
\end{equation*}

It then remains to show that $c > 0$. As mentioned earlier,
Bogomolny-Bohigas-Lebeouf ~\cite{BBL} estimated $c\approx 0.55826$
and it is possible to use numerical methods to rigorously obtain $c>0$
(see the longer version of the present paper).

There exists a more systematic approach though. One can construct a
Gaussian process $Y_{\infty}(x)$ on $\R$ with the covariance function
$r_{\infty}(x) = \frac{\sin{x}}{x}$ (see section \ref{sec:phil
rem}). In this case, we denote again $$\lambda_{2,\infty} =
-r''_{\infty}(0) = \frac{1}{3}.$$ For $T>0$, let $Z_{\infty}(T)$ be
the number of the zeros of $Y_{\infty}$ on $[0,T]$.

By the general theory of stochastic processes developed in
~\cite{CL}, one has $$\E Z_{\infty}(T) = \frac{T}{\pi}
\lambda_{2,\infty}.$$ Using the same method we used to compute the
variance of $X_{N}$, it is not difficult to see that
$$\Var Z_{\infty}(T) \sim c T,$$ where $c$ is the same constant as in
Theorem \ref{thm:main res}, provided that $c > 0$. It was proved by
Slud ~\cite{SL}, that it is indeed the case for a wide class of
covariance functions $r(x)$ which contains our case $r=r_{\infty}$.
Moreover, Slud (following Malevich ~\cite{ML} and Cuzick
~\cite{CZ}), established the central limit theorem for
$$\frac{Z_{\infty}(T)-\E Z_{\infty}(T)}{\sqrt{cT}}.$$

\end{proof}

\section{Proof of Theorem \ref{thm:main res}}
\label{sec:proof of CLT}

In this section we pursue the proof of the central limit theorem.
The main probabilistic tool we use is a result of Berk ~\cite{BR},
which establishes a central limit theorem for a triangular system of
random variables defined below. We start however with a general remark about
the zeros.

\subsection{A general remark about the distribution of the
zeros} \label{sec:phil rem}

Let $Y_{\infty}:\R\rightarrow\R$ be a Gaussian stationary process with
the covariance function $r_{\infty}(x) = \frac{\sin{x}}{x}.$ Such a
process exists, since
$r_{\infty}$ has a nonnegative Fourier transform on $\R$ (this is
related to Bochner's theorem, see e.g. ~\cite{CL}, page 126).
Moreover, we may assume with no loss of generality that $Y_{\infty}$
is almost surely everywhere differentiable. In fact, one may construct
$Y_{\infty}$ using its spectral representation. Alternatively, one
may use the Paley-Wiener construction
$Y(x)=\sum\limits_{n\in\Z}a_{n}\frac{\sin{(n-x)}}{n-x}$.

To define all the processes on the same space, we will assume that
$Y_{N}$ are defined on $\R$ by periodicity. We have the convergence
$r_{Y_{N}}(x)\rightarrow r_{Y_{\infty}}(x)$, as
$N\rightarrow\infty$. This implies the convergence of all the
finite-dimensional distributions of $Y_{N}$ to the
finite-dimensional distributions of $Y_{\infty}$. By theorem 9.2.2
~\cite{GS}, for
any continuous functional $\phi:C([a,b])\rightarrow \R$, the
distribution of $\phi(Y_{N})$ converges to the distribution of
$\phi(Y_{\infty})$ (one may easily check the
additional Lipshitz-like condition required by that theorem). 
Thus one could model a ``generic" statistic of
$X_{N}$ by the corresponding statistic of $Y_{\infty}$ on intervals,
growing linearly with $N$.

The convergence $r_{N}\rightarrow r_{\infty}$ suggests that the
distribution of the number of zeros of $X_{N}$ on the fixed interval
$[0,2\pi]$ is intimately related to the distribution of the number
of zeros of the fixed process $Y_{\infty}$ on growing intervals. The
particular case of the latter problem when the process is Gaussian
stationary (which is the case in this paper), has been studied
extensively over the past decades.

\subsection{Triangular systems of random variables}

Let $\tilde{\mathcal{L}} = \{ l_{k} \}$ be a (finite or infinite)
sequence of random variables (which we will refer as a {\em linear
system}), $K$ its length ($K=\infty$ if $\tilde{\mathcal{L}}$ is
infinite), and $\tilde{M}\ge 1$ an integer. We say that
$\tilde{\mathcal{L}}$ is $\tilde{M}$-dependent, if for every $i,j \ge 1$
with $i-j \ge \tilde{M}$, $\{l_{k}:\: k\le i \}$ and $\{l_{k}:\: k > j \}$
are independent. For example, a $0$-dependent linear
system $\tilde{\mathcal{L}}$ is independent.

One is usually interested in the distribution of the sums
$$S_{N}=\sum\limits_{k=1}^{N} l_{k}$$ as $N\rightarrow\infty$, that
is if $K=\infty$. In this situation one employs a special version of the CLT
due to Diananda ~\cite{DN}.
For a process $X(t)$ on $\R$ we denote $Z_{X}(T)$ to be the number
of zeros of $X$ on $[0,T]$. Cuzick ~\cite{CZ} employed Diananda's
result to prove the central limit theorem for $Z_{X}(T)$ as
$T\rightarrow\infty$ for a wide class of stationary processes $X$. A
more basic version of this theorem was applied earlier by Malevich
~\cite{ML} to obtain a similar, but more restrictive result.

Diananda's result applies to finite sums of random variables of
linear systems. The situation in our hands is somewhat different,
namely, of a so-called {\em triangular} system (or array) of random variables.
A triangular system of random variables is the correspondence
$K(N):\N\rightarrow\N\cup\{\infty\}$, together with a linear system
$$\tilde{\mathcal{L}}_{N} = \{ z_{N,k} : \: 1 \le k \le K(N)\}$$
for each $N\in\N$. We will use the notation $\tilde{\mathcal{Z}} =
\{ z_{N,k} \}$ to denote a triangular system.

Let $\tilde{M}=\tilde{M}(N)$ be sequence of integers. We say that
$\tilde{\mathcal{Z}}$ is $\tilde{M}$-dependent, if
$\tilde{\mathcal{L}}_{N}$ is $\tilde{M}(N)$-dependent for every $N$.

Given a triangular system $\tilde{\mathcal{Z}}$, we are usually
interested in the asymptotic distribution of the sums
$$S_{N}=\sum\limits_{k=1}^{K(N)} z_{N,k},$$ as $N\rightarrow\infty$. Note that here, unlike the linear systems,
both the number of the summands of $S_{N}$ and the summands
themselves depend on $N$. We have the following theorem due to Berk
~\cite{BR}, which establishes the asymptotic normality of $S_{N}$
for $\tilde{M}$-dependent triangular systems:

\begin{theorem}[Berk ~\cite{BR}]
\label{thm:Berk} Let $\mathcal{Z}=\{z_{N,k}: \: 1 \le k \le K(N)\}$
be a $\tilde{M}$-dependent triangular system of mean zero random
variables. Assume furthermore, that
\begin{enumerate}
\item
\label{it:Brk cond1} For some $\delta > 0$, $\E|z_{N,k}| ^{2+\delta}
\le A_{1}$, where $A_{1}>0$ is a  universal constant.

\item
\label{it:Brk cond2} For every $N$ and $1 \le i < j \le K(N)$, one
has
\begin{equation*}
\Var(z_{N,i+1}+\ldots+ z_{N,j}) \le (j-i)A_{2}
\end{equation*}
for some universal constant $A_{2}>0$.

\item
\label{it:Brk cond3} The limit
\begin{equation*}
\lim\limits_{N\rightarrow\infty} \frac{\Var(z_{N,1}+\ldots+
z_{N,K})}{K}
\end{equation*}
exists and is nonzero. Denote the limit $v>0$.

\item
\label{it:Brk cond4} We have
\begin{equation*}
\tilde{M} = o\bigg(K^{\frac{\delta}{2\delta+2}}\bigg).
\end{equation*}

\end{enumerate}

Then $$\frac{z_{N,1}+\ldots+ z_{N,K}}{\sqrt{vK}}$$ converges weakly
to $N(0,1)$.

\end{theorem}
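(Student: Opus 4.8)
The plan is to run the classical Bernstein ``big block--small block'' argument, adapted to a dependence range $\tilde{M}=\tilde{M}(N)$ that is allowed to grow with $N$. Fix a large $N$; write $z_{k}=z_{N,k}$, $K=K(N)$, $S_{N}=z_{1}+\cdots+z_{K}$ (it suffices to treat $K<\infty$, the case we need). First I would use hypothesis~\eqref{it:Brk cond4} to fix an integer block length $p=p(N)$; concretely, take $p\asymp\sqrt{K}\,\tilde{M}^{-1/\delta}$, the geometric mean of $\tilde{M}$ and $K\tilde{M}^{-(2+\delta)/\delta}$. Setting $q:=\tilde{M}$, one checks that $\tilde{M}\ll p\ll K$ and $q/p\to0$, each of these following from~\eqref{it:Brk cond4} rewritten as $\tilde{M}=o\bigl(K^{\delta/(2\delta+2)}\bigr)$. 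Then partition $\{1,\dots,K\}$ into $\ell\asymp K/p$ consecutive ``long'' blocks of length $p$, each separated from the next by a ``short'' block of length $q$; let $U=\sum_{i}U_{N,i}$ and $V=\sum_{i}V_{N,i}$ be the sums over the long, resp.\ short, blocks, so that $S_{N}=U+V$.

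Next I would dispose of the short blocks. Distinct short blocks are separated by a long block of length $p>\tilde{M}$, hence are independent, and $\Var(V_{N,i})\le qA_{2}$ by~\eqref{it:Brk cond2}; so $\Var(V)=\sum_{i}\Var(V_{N,i})\le\ell qA_{2}\asymp(q/p)KA_{2}=o(K)$, whence $V/\sqrt{vK}\to0$ in $L^{2}$, and hence in probability. A Cauchy--Schwarz bound on the covariance, together with $\Var(U)=O(K)$ (which holds since $\Var(S_{N})=O(K)$ and $\Var(V)=o(K)$), then gives
\begin{equation*}
\Var(U)=\Var(S_{N})-\Var(V)-2{\rm Cov}(U,V)=vK\,(1+o(1))
\end{equation*}
by~\eqref{it:Brk cond3}. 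For the long blocks the essential observation is that $\{U_{N,i}\}_{i}$ are \emph{independent} for the given $N$, since consecutive long blocks are separated by a short block of length exactly $\tilde{M}$; thus $U$ is a sum of independent mean-zero random variables, and the Lindeberg--Feller CLT applies once Lyapunov's condition is verified. For that I need a bound on $\E|U_{N,i}|^{2+\delta}$: cut each long block into $\asymp p/\tilde{M}$ consecutive sub-blocks of length $\le\tilde{M}$, note that sub-blocks at distance $\ge2$ are independent, and separate the odd- from the even-indexed sub-blocks, so that $U_{N,i}$ becomes a sum of two independent sums of $\asymp p/\tilde{M}$ mean-zero terms each of $(2+\delta)$-norm $\le\tilde{M}A_{1}^{1/(2+\delta)}$ (by Minkowski and~\eqref{it:Brk cond1}); applying Rosenthal's inequality to each sum yields
\begin{equation*}
\E|U_{N,i}|^{2+\delta}\ll(p\tilde{M})^{1+\delta/2}.
\end{equation*}
Summing over the $\ell\asymp K/p$ long blocks and dividing by $(\Var U)^{1+\delta/2}\asymp(vK)^{1+\delta/2}$ gives a Lyapunov fraction $\ll(p/K)^{\delta/2}\tilde{M}^{(2+\delta)/2}\asymp K^{-\delta/4}\tilde{M}^{(1+\delta)/2}$, which is $o(1)$ precisely by~\eqref{it:Brk cond4}. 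Hence $U/\sqrt{\Var U}\to N(0,1)$, so $U/\sqrt{vK}\to N(0,1)$; combining with $V/\sqrt{vK}\to0$ via Slutsky's theorem gives $S_{N}/\sqrt{vK}\to N(0,1)$, which is the assertion.

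The step I expect to be the main obstacle is the long-block moment estimate $\E|U_{N,i}|^{2+\delta}\ll(p\tilde{M})^{1+\delta/2}$, together with the observation that hypothesis~\eqref{it:Brk cond4} is of exactly the right strength to admit a block length $p$ making \emph{both} $\Var(V)=o(K)$ and the Lyapunov fraction $o(1)$: reconciling these two competing demands on $p$ is where the exponent $\delta/(2\delta+2)$ comes from. The remaining ingredients --- the independence of the blocks, the variance bookkeeping, the two Cauchy--Schwarz estimates, and the passage from Lyapunov's criterion to Lindeberg--Feller --- are routine.
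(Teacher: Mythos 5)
The paper does not prove this statement: Theorem~\ref{thm:Berk} is quoted from Berk~\cite{BR} and used as a black box, so there is no in-paper proof to compare against. That said, your sketch reproduces essentially the argument of Berk's three-page note, which is indeed the Bernstein big-block/small-block method, so your proposal and the cited source take the same route.

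The technical bookkeeping in your sketch checks out. With $q=\tilde M$ and $p\asymp\sqrt{K}\,\tilde M^{-1/\delta}$, the requirement $\tilde M\ll p$ unwinds to $\tilde M^{(1+\delta)/\delta}\ll\sqrt K$, i.e.\ $\tilde M=o(K^{\delta/(2\delta+2)})$, and your Lyapunov fraction $K^{-\delta/4}\tilde M^{(1+\delta)/2}\to0$ unwinds to the very same condition --- so, as you remark, hypothesis~\eqref{it:Brk cond4} is exactly the compatibility constraint between ``short blocks are negligible'' and ``Lyapunov holds,'' and the geometric-mean choice of $p$ balances the two. The moment estimate $\E|U_{N,i}|^{2+\delta}\ll p\tilde M^{1+\delta}+p^{1+\delta/2}\ll(p\tilde M)^{1+\delta/2}$ via Rosenthal is correct (and in fact the sharper, un-rounded form $p\tilde M^{1+\delta}+p^{1+\delta/2}$ shows the Lyapunov fraction is $o(1)$ for \emph{any} $\tilde M\ll p\ll K$, so the precise choice of $p$ is not even critical once one keeps both terms). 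Two cosmetic points: the paper's definition of $\tilde M$-dependence has a sign typo ($i-j\ge\tilde M$ should be $j-i\ge\tilde M$); you read through it correctly. And Berk~\cite{BR}, writing in 1973, bounds the $(2+\delta)$th moment of a long block with a hands-on $c_r$-inequality argument rather than invoking Rosenthal by name, but the exponents and the conclusion are identical --- your use of Rosenthal is a clean modern packaging of the same step, not a different method.
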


Note that condition \ref{it:Brk cond4} requires, in particular, that
as $N\rightarrow\infty$, one has $K\rightarrow\infty$. Berk's result
was recently generalized by Romano and Wolf ~\cite{RW}.

\subsection{Plan of the proof of Theorem \ref{thm:main res}}
\label{sec:abt proof main res CLT}

We define the scaled processes
\begin{equation*}
Y_{N} (x) := X_{N} \bigg(\frac{x}{m}\bigg),
\end{equation*}
on $[0,2\pi m]$, where we reuse the notation $m=N+1/2$ from the
proof of Proposition \ref{prop:J asymp}. Let us denote their
covariance function
\begin{equation*}
r_{N}(x) =r_{Y_{N}}(x) = r_{X_{N}} \bigg(\frac{x}{m}\bigg) =
f_{N}(x),
\end{equation*}
with $f_{N}$ defined by \eqref{eq:f def}. It is obvious that
\begin{equation*}
Z_{X_{N}} = Z_{Y_{N}},
\end{equation*}
the number of zeros of $Y_{N}(x)$ on $$I'_{N}:=[0,2\pi m].$$ It will
be sometimes more convenient for us to work with $Y_{N}(x)$ and
$r_{N}(x)$ being defined (by periodicity) on
$$I_{N}:=[-\pi m, \pi m]$$ rather than on $I_{N}'$.

We are interested in the number $Z_{Y_{N}}$ of zeros of $Y_{N}$ on
intervals $I_{N}$, whose length grows linearly with $N$. Divide
$I_{N}$ into (roughly) $N$ subintervals $I_{N,k}$ of equal length
with disjoint interiors (so that the probability of having a zero on
an overlap is $0$), and represent $Z_{Y_{N}}$, almost surely, as a
sum of random variables $Z_{N,k}$, the number of zeros of $Y_{N}$ on
$I_{N,k}$. The stationarity of $Y_{N}$ implies that for a fixed $N$,
$Z_{N,k}$ are identically distributed (but by no means {\em
independent}).

We, therefore, obtain a triangular system
$$\tilde{\mathcal{Z}} = \{\tilde{Z}_{N,k}=Z_{N,k} - \E Z_{N,k} \}$$ of
mean zero random variables with growing rows. Just as $Z_{N,k}$, the
random variables $\tilde{Z}_{N,k}$ are identically distributed. We
will easily show that $\tilde{\mathcal{Z}}$ satisfies the conditions
\ref{it:Brk cond2}-\ref{it:Brk cond3} of Theorem \ref{thm:Berk}.
Moreover, we will see later that condition \ref{it:Brk cond1} holds
with $\delta=1$ (see Proposition \ref{prop:third mom ZM bnd}; here we
deal with a more complicated case of mollified random variables defined
below; an easier version of the same argument applies in case of
$\tilde{\mathcal{Z}}$, however it does not give the CLT due to the lack of
independence).

The main obstacle to this approach is that the random variables
$Z_{N,k}$ are {\em not independent} (and thus neither are
$\tilde{Z}_{N,k}$). In fact, we may give an explicit expression for
$$Cov(Z_{N,k_{1}},Z_{N,k_{2}})$$ in terms of an integral, which
involves $r_{N}$ and its derivatives. The stationarity of $Y_{N}$
implies that $Cov(Z_{N,k_{1}},Z_{N,k_{2}})$ depends on the
difference $k_{2}-k_{1}$ only (that is, the discrete process
$Z_{N,k}$ with $N$ fixed is stationary, provided that the continuous
process $Y_{N}$ is).

To overcome this obstacle, we notice that $r_{N}(x)$ and its couple
of derivatives are {\em small} outside a short interval around the
origin, and, moreover, their $L^{2}$ mass is concentrated around the
origin. This means that the dependencies between the values and the
derivatives of $Y_{N}(x)$ on $I_{N,k_{1}}$ and those on
$I_{N,k_{2}}$ are ``small" for $|k_{1}-k_{2}|$ sufficiently large.
Thus the system $\tilde{\mathcal{Z}}$ is ``almost $M$-independent",
provided that $M=M(N)$ is large enough (it is sufficient to take any
sequence $M(N)$ growing to infinity; see Proposition
\ref{prop:E(Z-Zmol)^2=o(N)}).

One may then hope to exchange the process $Y_{N}$ (and thus the
system $\tilde{\mathcal{Z}}$) with a process $Y_{N}^{M}$ (resp.
$\tilde{\mathcal{Z}}^{M}$), where $M=M(N)$ is a growing parameter,
so that the distributions of the number of zeros $Z_{N}=Z_{Y_{N}}$
and $Z_{N}^{M}=Z_{Y_{N}^{M}}$ of $Y_{N}$ and $Y_{N}^{M}$
respectively, are asymptotically equivalent, and the above
properties of the original system stay unimpaired. In addition, we
require $\tilde{\mathcal{Z}}^{M}$ to be $M$-dependent (or, rather,
$const\cdot M$-dependent). To prove the asymptotic equivalence of
$Z_{N}$ and $Z_{N}^{M}$ we will evaluate the variance of the
difference $\Var(Z_{N}-Z_{N}^{M})$ (see Proposition
\ref{prop:E(Z-Zmol)^2=o(N)}).

To define $Y_{N}^{M}$, we introduce a function $r_{N}^{M} =
r_{N}\cdot S_{M}$, where $|S_{M}|\le 1$ is a sufficiently smooth
function supported on $[-const\cdot M, const\cdot M]$ approximating
the unity near the origin with a positive Fourier transform on the
circle $I_{N}$ (with end-points identified). We require that $r_{N}^{M}$ (and a couple of its
derivatives) preserve $100\%$ of the $L^{2}$ mass of $r_{N}$ (resp.
a couple of its derivatives) (see Lemma \ref{lem:|rNM-rN|L2->0}). We
then construct $Y_{N}^{M}$ with covariance function $r_{N}^{M}$
using some Fourier analysis on $I_{N}$. It is important to observe
that the covariance function being supported essentially at $[-M,M]$
means that $\tilde{\mathcal{Z}}^{M}$ is (roughly) $M$-independent,
in the periodic sense.

To get rid of the long-range dependencies, which occur as a result
of the periodicity of $Y_{N}$, we prove the central limit theorem
for positive and negative zeros separately (see the proof of
Proposition \ref{prop:CLT for Zmol}). Namely we define $Z^{M,+}$
(resp. $Z^{M,-}$) to be the number of zeros $z$ of $Y_{N}^{M}$ with
$z>0$ (resp. $z<0$), and
$$Z^{M}=Z^{M,+}+Z^{M,-}$$ almost surely. We are going to prove the
asymptotic normality of the distribution of $Z^{M,+}$ and similarly,
of $Z^{M,-}$. We will prove that this will imply the asymptotic
normality of the sum $Z^{M}$.

Concerning the choice of $M=M(N)$, on one hand, to well approximate
$Y_{N}$, $M$ has to grow to infinity with $N$. On the other hand,
condition \ref{it:Brk cond4} of theorem \ref{thm:Berk} constrains
the growth rate of $M$ from above. The above considerations leave us
a handy margin for $M$.

\subsection{Some conventions}
In this section we will use some Fourier analysis with functions
defined on the circle $I_{N}:=[-\pi m, \pi m]$ (or equivalently,
$I_{N}':=[0,2\pi m]$). We will adapt the following conventions. Let
$f:I_{N}\rightarrow\R$ be a real-valued function. For $n\in\Z$, we
define
\begin{equation*}
\hat{f}(n) = \int\limits_{I_{N}} f(x) e^{-\frac{in}{m}x}
\frac{dx}{\sqrt{2\pi m}}.
\end{equation*}

\vspace{5mm}

If $f$ is a real valued {\em even} nice function, then
\begin{equation*}
r(x) = \hat{r}(0)\cdot\frac{1}{\sqrt{2\pi m}} +
\sum\limits_{n=1}^{\infty} \sqrt{2}\hat{r}(n)\cdot
\frac{\cos(\frac{nx}{m})}{\sqrt{\pi m}}.
\end{equation*}
With the above conventions, if $f,g:I_{N}\rightarrow\R$ are two
functions, then
\begin{equation*}
\hat{(f\cdot g)} (n) = \frac{1}{\sqrt{2\pi m}} (\hat{f}*\hat{g})(n),
\end{equation*}
and
\begin{equation*}
(\hat{f*g})(n) = \sqrt{2\pi m}\hat{f}(n)\cdot \hat{g}(n).
\end{equation*}

For the real valued even functions, the Parseval identity is
\begin{equation*}
\|f \|_{L^{2}(I_{N})}^{2} = \|\hat{f}\|_{l^{2}(\Z)}^{2} =
\hat{f}(0)^2+\sum\limits_{n=1}^{\infty} 2 \hat{f}(n)^2.
\end{equation*}

\subsection{Proof of Theorem \ref{thm:main res}}

\begin{proof}[Proof of Theorem \ref{thm:main res}]

Let $Y_{N}(x)=X_{N}(\frac{x}{m})$, and for notational convenience,
we assume by periodicity, that $Y_{N}$ and its covariance function
$r_{N}$ are defined on $I_{N}:=[-\pi m, \pi m]$. One may rewrite the
definition of $Y_{N}$ using
\begin{equation}
\label{eq:rN Fourier repr} r_{N}(x) = \hat{r}_{N}(0)\cdot
\frac{1}{\sqrt{2\pi m}} + \sum\limits_{n=1}^{\infty}
\sqrt{2}\hat{r}_{N}(n) \cdot \frac{1}{\sqrt{\pi
m}}\cos\bigg(\frac{n}{m}x\bigg),
\end{equation}
as
\begin{equation}
\label{eq:spect form YN}
\begin{split}
 Y_{N}(x) &= \sqrt{\hat{r}_{N}(0)}\frac{1}{(2\pi
m)^{1/4}} a_{0} \\&+ \sum\limits_{n=1}^{\infty}
2^{1/4}\sqrt{\hat{r}_{N}(n)} \cdot \frac{1}{(\pi m)^{1/4}}
\bigg(a_{n}\cos\bigg(\frac{n}{m}x\bigg)+b_{n}\sin\bigg(\frac{n}{m}x\bigg)\bigg),
\end{split}
\end{equation}
where $a_{n}$ and $b_{n}$ are $(0,1)$ Gaussian i.i.d. One may
compute $\hat{r}_{N}$ to be
\begin{equation*}
\hat{r}_{N}(n) = \begin{cases} \frac{\sqrt{\pi m}}{\sqrt{2} N}, \: &1\le |n| \le N \\
0, &\text{otherwise}
\end{cases} = \frac{\sqrt{\pi m}}{\sqrt{2} N} \chi_{1\le |n| \le N} (n).
\end{equation*}

It is easy to identify \eqref{eq:spect form YN} as the spectral form
of $Y_{N}$ on the circle, analogous to the well-known spectral
theory on the real line (see e.g. ~\cite{CL}, section 7.5). The
spectral representation proved itself as extremely useful and
powerful while studying various properties of stationary processes.

Let $0 < M < \pi m$ be a large parameter and $\chi_{[-M,M]}$ be the
characteristic function of $[-M,M]\subseteq I_{N}$. Define
\begin{equation*}
S_{M}(x) = \frac{(\chi_{[-M,M]})^{*8} (x)}{C M^7},
\end{equation*}
where $(\cdot) ^{*l}$ stands for convolving a function $l$ times to
itself, and the universal constant $C>0$ is chosen so that
$S_{M}(0)=1$. The function $S_{M}:I_{N}\rightarrow\R$ is a piecewise
polynomial of degree $7$ in $\frac{|x|}{M}$, independent of $M$. It
is a $6$-times continuously differentiable function supported at
$[-8M,8M]$. For $|x|< 2M$, for example,
\begin{equation}
\label{eq:S_M=1+...} S_{M}(x) =
1+b_{1}\bigg(\frac{x}{M}\bigg)^2+b_{2}\bigg(\frac{x}{M}\bigg)^4+b_{3}\bigg(\frac{x}{M}\bigg)^6+
b_{4}\bigg(\frac{|x|}{M}\bigg)^7
\end{equation}
for some constants $b_{1},\ldots,b_{4}\in\R$, which may be easily
computed.

We define the mollified covariance function
$r^{M}=r_{N}^{M}:I_{N}\rightarrow\R$ by
\begin{equation}
\label{eq:rNM def}
r_{N}^{M}(x) := r_{N}(x) \cdot S_{M}(x),
\end{equation}
with the Fourier series given by
\begin{equation}
\label{eq:rM Fourier trans} \hat{r}_{N}^{M}(n) = \frac{1}{\sqrt{2\pi
m}} \cdot (\hat{r}_{N} * \hat{S}_{M})(n) = \frac{1}{2 N} (\chi_{1\le
|n| \le N} * \hat{S}_{M}) (n) \ge 0,
\end{equation}
since
\begin{equation}
\label{eq:SM Fourier transf comp} \hat{S}_{M}(n) =  \frac{(2\pi
m)^{7/2}}{CM^7} \cdot (\hat{\chi}_{[-M,M]} (n))^8 \ge 0.
\end{equation}
One may compute explicitly the Fourier transform of $\chi_{[-M,M]}$
to be
\begin{equation}
\label{eq:chi[-M,M] Fourier transf}
\hat{\chi}_{[-M,M]} (n) = \begin{cases} \sqrt{\frac{2}{\pi}}\frac{M}{\sqrt{m}}, &n=0 \\
\sqrt{\frac{2}{\pi}} \cdot \frac{\sqrt{m}}{n} \sin(\frac{nM}{m}),
&n\ne 0
\end{cases}.
\end{equation}

The nonnegativity of $\hat{S}_{M}$ allows us to construct a process
$Y_{N}^{M}(x)$ on $I_{N}$ with covariance function $r_{Y_{N}^{M}} =
r_{N}^{M}$ as
\begin{equation}
\label{eq:spect form YNM}
\begin{split}
Y_{N}^{M}(x) &= \sqrt{\hat{r}_{N}^{M}(0)}\frac{1}{(2\pi m)^{1/4}}
a_{0}\\&+\sum\limits_{n=1}^{\infty} \sqrt{\hat{r}_{N}^{M}(n)} \cdot
\frac{2^{1/4}}{(\pi m)^{1/4}} \bigg(a_{n}
\cos\bigg(\frac{n}{m}x\bigg) + b_{n}
\sin\bigg(\frac{n}{m}x\bigg)\bigg),
\end{split}
\end{equation}
the RHS being almost surely an absolutely convergent series,
uniformly w.r.t. $x$.

\begin{remark}
We assume that the $a_{n}$ and $b_{n}$ for $n\le N$
in \eqref{eq:spect form YNM} are the same as in \eqref{eq:spect form YN},
so that $Y_{N}^{M}(x) $ converges in $L^2$ to $Y_{N}(x)$ (see Lemma
\ref{eq:YNM->YN L2}).
\end{remark}

Let $M=M(N)$ be any sequence of numbers growing to infinity,
satisfying $M=o(N^{1/4})$. Proposition \ref{prop:CLT for Zmol} then
implies that as $N\rightarrow\infty$, $\frac{Z_{N}^{M} - \E
Z_{N}^{M}}{\sqrt{c N}}$ is asymptotically normal. Proposition
\ref{prop:E(Z-Zmol)^2=o(N)} states that
$$\Var(Z_{N}^{M} - Z_{N}) = o(\Var(Z_{N})),$$ so that the
distribution of $\frac{Z_{N} - \E Z_{N}}{\sqrt{c N}}$ is
asymptotically equivalent to that of $\frac{Z_{N}^{M} - \E
Z_{N}^{M}}{\sqrt{c N}}$, which implies the statement of Theorem
\ref{thm:main res}.

\end{proof}

\begin{proposition}
\label{prop:E(Z-Zmol)^2=o(N)} Suppose that as $N\rightarrow\infty$,
we have $M\rightarrow\infty$. Then one has
\begin{equation*}
\Var (Z_{N}-Z_{N}^{M}) = o(N).
\end{equation*}

The proof of Proposition \ref{prop:E(Z-Zmol)^2=o(N)} is given in
section \ref{sec:pr E(Z-Zmol)^2=o(N)}.

\end{proposition}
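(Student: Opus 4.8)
The plan is to reduce the statement $\Var(Z_N - Z_N^M) = o(N)$ to an $L^2$-type bound on the difference of the processes $Y_N$ and $Y_N^M$, and then to exploit the fact that, by Lemma \ref{lem:|rNM-rN|L2->0} (which we may assume), the mollified covariance function $r_N^M$ preserves essentially all of the $L^2$ mass of $r_N$ together with a couple of derivatives. First I would use the representation of Remark \ref{rem:VarZ=intE|X'(0)X'(t)||0} to write both $\E(Z_N^2) - \E Z_N$ and the corresponding quantity for $Z_N^M$, and more importantly the cross term $\E(Z_N Z_N^M)$, as double integrals over $I_N \times I_N$ of conditional expectations of the form $\E[\,|Y'_N(t_1) Y'^{(M)}_N(t_2)|\; \big|\; Y_N(t_1) = Y_N^M(t_2) = 0\,]$ divided by the square root of a $2\times 2$ determinant. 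Since both processes are built from the \emph{same} Gaussian coefficients $a_n, b_n$ (as noted in the Remark following \eqref{eq:spect form YNM}), the joint vector $(Y_N(t_1), Y_N^M(t_2), Y'_N(t_1), Y'^{(M)}_N(t_2))$ is a centered Gaussian whose covariance entries are all expressible through $r_N$, $r_N^M = r_N S_M$ and their derivatives evaluated at $t_2 - t_1$, or at the relevant shifted arguments.

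The main step is then to show that $\Var(Z_N) + \Var(Z_N^M) - 2\,\mathrm{Cov}(Z_N, Z_N^M) = o(N)$. By the variance asymptotics already established (Proposition \ref{prop:J asymp} and \eqref{eq:var J exp}), $\Var(Z_N) \sim cN$, and the identical computation applied to $r_N^M$ — using that $S_M \to 1$ pointwise and that multiplication by $S_M$ barely perturbs $r_N$ and its first two derivatives in the relevant $L^2$/pointwise senses (Lemma \ref{lem:|rNM-rN|L2->0}) — gives $\Var(Z_N^M) \sim cN$ as well. So it remains to prove $\mathrm{Cov}(Z_N, Z_N^M) \sim cN$, i.e.\ that the cross second moment matches the diagonal one up to $o(N)$. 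Here I would perform the same change of variables $x = mt$ as in the proof of Proposition \ref{prop:J asymp}, reducing the cross-term integral to $\frac{4m}{\pi}\int_0^{\pi m}[\,\cdots\,]dx$, split $[0,\pi m]$ into $[0,\delta]$ and $[\delta, \pi m]$, bound the small-$x$ piece crudely as in Lemma \ref{lem:int est t small} (the integrand near the diagonal is controlled uniformly because $|r_N^M| \le 1$ and $S_M(0)=1$, $S_M'(0)=0$), and on $[\delta,\pi m]$ replace $r_N$ and $r_N^M$ and their derivatives by $g(x) = \sin x / x$ and $g(x) S_M(x/m)$ respectively. Since $M = M(N) \to \infty$ arbitrarily slowly while $x$ ranges up to $\pi m$, the factor $S_M(x/m)$ is NOT uniformly close to $1$; but the point is that the integrand, as in Proposition \ref{prop:J asymp}, decays like $1/x^2$, so the contribution from the region $x \gg M$ where $S_M(x/m)$ differs appreciably from $1$ (in fact vanishes, since $S_M$ is supported on $[-8M,8M]$) is $O(1/M) = o(1)$ relative to the integral, and likewise the contribution from the region where $S_M$ transitions is negligible.

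The hard part, I expect, is handling the near-diagonal behaviour of the \emph{cross} term carefully: when $t_1$ and $t_2$ are close, the covariance matrix of $(Y_N(t_1), Y_N^M(t_2))$ degenerates, and one must check that the conditional second moment $\E[\,|Y'_N(t_1) Y'^{(M)}_N(t_2)|\mid Y_N(t_1) = Y_N^M(t_2) = 0\,]$ does not blow up faster than the square root of the determinant decays — exactly the kind of estimate behind Lemma \ref{lem:int est t small}, but now with a mismatched pair of processes, so one needs the Taylor expansions of $r_N S_M$ at the origin (using $S_M(x) = 1 + b_1 (x/M)^2 + \cdots$ from \eqref{eq:S_M=1+...}, whose quadratic coefficient is $O(1/M^2)$, hence negligible) to see that $r_N^M$ agrees with $r_N$ to the order needed for the determinant lower bound $\gg x^4$ to persist. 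A secondary technical point is justifying the Kac--Rice / Cramér--Leadbetter second-moment formula for the joint count, i.e.\ the nondegeneracy of the relevant $4\times 4$ Gaussian vectors for $t_1 \ne t_2$; this follows as in Remark \ref{rem:CL mom form just} from Qualls' argument, since $Y_N^M$ has a strictly positive spectral density on its support and shares enough Fourier coefficients with $Y_N$. Once these pieces are in place, $\mathrm{Cov}(Z_N, Z_N^M) = cN + o(N)$, and combining the three asymptotics yields $\Var(Z_N - Z_N^M) = (c + c - 2c)N + o(N) = o(N)$, as claimed.
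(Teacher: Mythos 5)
Your plan is to expand
\begin{equation*}
\Var(Z_{N}-Z_{N}^{M}) = \Var(Z_{N}) + \Var(Z_{N}^{M}) - 2\,\mathrm{Cov}(Z_{N},Z_{N}^{M})
\end{equation*}
and show each of the three terms is $cN+o(N)$. This is a genuinely different route from the paper, which never computes $\Var(Z_{N}^{M})$ or $\mathrm{Cov}(Z_{N},Z_{N}^{M})$ at all: instead it writes the left-hand side directly as $\sum_{k_{1},k_{2}}\mathrm{Cov}(Z_{N,k_{1}}-Z_{N,k_{1}}^{M},Z_{N,k_{2}}-Z_{N,k_{2}}^{M})$ over $\sim Sm$ small blocks, splits the sum into near-diagonal, intermediate, and far ranges, and bounds each range by a different mechanism (sign-indicator comparison plus Chebyshev near the diagonal; uniform closeness of the Gaussian densities in the mid range; Cuzick's lemma on $\mathrm{Cor}(|V_{1}|,|V_{2}|)\le\rho^{2}$ for the far range). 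That structure exploits cancellation between $Y_{N}$ and $Y_{N}^{M}$ at the level of the integrand, so one never needs any precise asymptotic for $\Var(Z_{N}^{M})$ or the cross covariance.

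There is a concrete gap in your outline. You assert that the covariance entries of the joint Gaussian vector $(Y_{N}(t_{1}),Y_{N}^{M}(t_{2}),Y_{N}'(t_{1}),{Y_{N}^{M}}'(t_{2}))$ are ``all expressible through $r_{N}$, $r_{N}^{M}=r_{N}S_{M}$ and their derivatives.'' That is false. Because the spectral amplitudes of $Y_{N}$ and $Y_{N}^{M}$ are $\sqrt{\hat r_{N}(n)}$ and $\sqrt{\hat r_{N}^{M}(n)}$ respectively, the cross covariance $\E[Y_{N}(t_{1})Y_{N}^{M}(t_{2})]$ is a \emph{new} even function $r_{N}^{M,0}$ whose Fourier coefficients are the geometric means $\sqrt{\hat r_{N}(n)\hat r_{N}^{M}(n)}$, and this is not $r_{N}$, not $r_{N}^{M}$, not $r_{N}S_{M}$ at a shifted argument. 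It has different analytic properties: it is not compactly supported, it satisfies only $O(M^{-1/4})$ $L^{2}$-closeness to $r_{N}$ (compared to $O(M^{-1/2})$ for $r_{N}^{M}$), and, crucially, $r_{N}^{M,0}(0)<1$ strictly, so the processes $Y_{N}$ and $Y_{N}^{M}$ are not perfectly correlated at coincident points. Your near-diagonal analysis invokes the Taylor expansion of $r_{N}S_{M}$ at the origin to argue the $4\times4$ determinant stays $\gg x^{4}$ as for the variance, but for $\mathrm{Cov}(Z_{N},Z_{N}^{M})$ the relevant determinant is built from $r_{N}^{M,0}$, whose expansion at $0$ is not controlled by anything you have cited and whose leading constant $r_{N}^{M,0}(0)$ is not $1$. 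The paper devotes substantial machinery (the $r_{N}^{M,0}$-parts of Lemma on $L^{2}$-closeness, the uniform Lipschitz estimate, and the resulting uniform pointwise bounds in the Corollary) to controlling exactly this object, and sidesteps the near-diagonal Kac--Rice singularity for the cross term entirely by working with indicators of sign changes instead. Your argument as written would halt at the moment you try to write out the joint covariance matrix, and the ``identical computation'' claim for $\Var(Z_{N}^{M})$ and $\mathrm{Cov}(Z_{N},Z_{N}^{M})$ glosses over the fact that three distinct covariance functions with three different convergence rates enter, each requiring its own diagonal analysis to pin down the constant $c$ to $o(N)$ precision.
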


\subsection{Proof of CLT for $Z_{N}^{M}$}

The main result of the present section is Proposition \ref{prop:CLT
for Zmol}, which establishes the central limit theorem for the
mollified random variable $Z^{M}$.

\begin{proposition}
\label{prop:CLT for Zmol} Suppose that as $N\rightarrow\infty$,
$M=o(N^{1/4})$. Then for $N\rightarrow\infty$, the random variables
$\frac{Z_{N}^{M}-\E Z_{N}^{M}}{\sqrt{cN}}$, weakly converge to the
standard Gaussian $N(0,1)$.
\end{proposition}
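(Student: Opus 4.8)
The plan is to verify the four hypotheses of Berk's theorem (Theorem \ref{thm:Berk}) for the triangular system $\tilde{\mathcal{Z}}^{M} = \{\tilde{Z}_{N,k}^{M}\}$ built from the mollified process $Y_{N}^{M}$, and then to assemble the pieces. First I would fix the combinatorial setup: partition $I_{N} = [-\pi m, \pi m]$ into roughly $K \asymp N$ subintervals $I_{N,k}$ of equal length $L = 2\pi m / K$ with disjoint interiors, and set $Z_{N,k}^{M}$ to be the number of zeros of $Y_{N}^{M}$ on $I_{N,k}$, $\tilde{Z}_{N,k}^{M} = Z_{N,k}^{M} - \E Z_{N,k}^{M}$. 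Because $r_{N}^{M} = r_{N}\cdot S_{M}$ is supported in $[-8M, 8M]$, the increments $Y_{N}^{M}$ restricted to $I_{N,k_1}$ and $I_{N,k_2}$ are independent Gaussian vectors once $|k_1 - k_2| L > 16M$, i.e. the system is $\tilde{M}$-dependent with $\tilde{M} = O(M/L) = O(M)$ (one must be slightly careful with the periodic wrap-around on the circle $I_{N}$, but that only affects $O(1)$ blocks at the ends, which is handled exactly by the positive/negative splitting described in Section \ref{sec:abt proof main res CLT}).

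Next I would check conditions (\ref{it:Brk cond1})--(\ref{it:Brk cond4}) in turn. Condition (\ref{it:Brk cond1}) with $\delta = 1$ is a uniform bound $\E|\tilde{Z}_{N,k}^{M}|^{3} \le A_1$; I would cite Proposition \ref{prop:third mom ZM bnd} (the third-moment bound for the mollified variables, invoked in the excerpt), noting that the stationarity of $Y_{N}^{M}$ makes this independent of $k$, and that subtracting the mean $\E Z_{N,k}^{M} = O(L)$ is harmless. Condition (\ref{it:Brk cond2}), $\Var(\tilde{Z}_{N,i+1}^{M} + \cdots + \tilde{Z}_{N,j}^{M}) \le (j-i)A_2$, follows because that partial sum is the number of zeros of $Y_{N}^{M}$ on an interval of length $(j-i)L$, and the variance of the zero count of a stationary process on a length-$T$ interval is $O(T)$ uniformly — this is precisely the content of the second-moment analysis of Sections \ref{sec:sec mom int}--\ref{sec:asmp var} applied to the covariance $r_{N}^{M}$ (the key point being that the relevant integrals are bounded uniformly in $N$ and $M$, using that $|S_M| \le 1$ and that $S_M$ and its derivatives are uniformly controlled). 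Condition (\ref{it:Brk cond3}) is the assertion that $\Var(Z_{N}^{M})/K \to v$ for some $v > 0$; here I would combine the variance computation for $Z_{N}^{M}$ (which runs just as in Section \ref{sec:asmp var}, since $r_{N}^{M} \to r_{\infty}$ in the same scaling limit because $S_M \to 1$ pointwise as $M \to \infty$) with Proposition \ref{prop:E(Z-Zmol)^2=o(N)}, which gives $\Var(Z_N^M) = \Var(Z_N) + o(N) \sim cN$, so that with $K \asymp N$ we get $v = c/(\text{const}) > 0$; matching the normalization $\sqrt{cN}$ in the statement fixes the block count $K$. Finally, condition (\ref{it:Brk cond4}), $\tilde{M} = o(K^{\delta/(2\delta+2)}) = o(K^{1/4})$, holds since $\tilde{M} = O(M)$, $K \asymp N$, and we are assuming $M = o(N^{1/4})$.

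Having verified all four conditions, Berk's theorem yields that $(Z_{N}^{M,+} - \E Z_{N}^{M,+})/\sqrt{v K^{+}}$ converges weakly to $N(0,1)$, where $Z_{N}^{M,+}$ counts the positive zeros (summing over the $K^{+} \asymp K/2$ blocks in $[0,\pi m]$); the same argument gives the analogous statement for $Z_{N}^{M,-}$. The last step is to pass from the two halves to the full count $Z_{N}^{M} = Z_{N}^{M,+} + Z_{N}^{M,-}$: I would argue that the normalized pair $(\tilde{Z}_{N}^{M,+}/\sqrt{cN/2},\, \tilde{Z}_{N}^{M,-}/\sqrt{cN/2})$ is asymptotically jointly Gaussian — the cross-covariance $\mathrm{Cov}(Z_{N}^{M,+}, Z_{N}^{M,-})$ comes only from the $O(M/L)$ boundary blocks near $x = 0$ and $x = \pm\pi m$ where the supports interact, hence is $O(M) = o(N)$, so in the limit the two halves are independent $N(0, c/2)$ and their sum is $N(0,c)$. (Alternatively, since the covariance structure is already under control, one can run Berk's theorem directly on a single non-periodic block decomposition and thereby skip the splitting; the splitting is only needed to defeat the long-range periodic dependency.) Combining, $(Z_{N}^{M} - \E Z_{N}^{M})/\sqrt{cN} \Rightarrow N(0,1)$, which is Proposition \ref{prop:CLT for Zmol}.

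I expect the main obstacle to be condition (\ref{it:Brk cond3}) together with the joint-Gaussianity of the two halves: one must show not just that $\Var(Z_N^M)/N$ has a limit, but that it equals $c$ (the \emph{same} constant as for $Z_N$), which requires rerunning the delicate scaling-limit variance estimate of Section \ref{sec:asmp var} for the mollified covariance $r_{N}^{M}$ uniformly in $M \to \infty$, and checking that the mollification does not perturb the limiting integral $c_0$ — this hinges on showing $r_N^M$, $(r_N^M)'$, $(r_N^M)''$ all converge to $r_\infty$, $r_\infty'$, $r_\infty''$ in the appropriate $L^2$/pointwise sense as $N, M \to \infty$, which is exactly where Lemma \ref{lem:|rNM-rN|L2->0} and Proposition \ref{prop:E(Z-Zmol)^2=o(N)} do the work. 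The verification of (\ref{it:Brk cond2}) uniformly in both $N$ and $M$ is the other technical point requiring care, since the constants in the second-moment formula a priori depend on lower bounds for $1 - (r_N^M)^2$ near the origin.
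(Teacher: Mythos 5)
Your verification of Berk's hypotheses tracks the paper's proof almost exactly: same block decomposition of $I_{N}^{+}$ into $K=LN$ intervals for fixed large $L$, same use of Proposition~\ref{prop:third mom ZM bnd} for condition~(\ref{it:Brk cond1}), same integral-formula analysis for the partial-sum variance bound in condition~(\ref{it:Brk cond2}), the same route via Proposition~\ref{prop:E(Z-Zmol)^2=o(N)} and the already-established $\Var(Z_{N})\sim cN$ for condition~(\ref{it:Brk cond3}), and the same $\tilde{M}=O(M)$, $K\asymp N$, $M=o(N^{1/4})$ bookkeeping for condition~(\ref{it:Brk cond4}). Up to that point the argument is sound and is the paper's argument.

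The one genuine gap is in the last step, where you pass from CLTs for $Z_{N}^{M,+}$ and $Z_{N}^{M,-}$ to a CLT for their sum. You write that $\mathrm{Cov}(Z_{N}^{M,+},Z_{N}^{M,-})=O(M)=o(N)$, ``so in the limit the two halves are independent $N(0,c/2)$.'' That inference is not valid: two sequences with marginally Gaussian limits and vanishing cross-covariance need not be jointly asymptotically Gaussian (uncorrelated $\not\Rightarrow$ independent, and univariate weak convergence controls only the marginals, not the joint law of the pair), so you cannot conclude the sum is $N(0,c)$ from those two facts alone. The paper fixes exactly this: it trims off the length-$8M$ boundary intervals near $0$ and $\pm\pi m$, defining $\hat{Z}_{N}^{M,\pm}$ as the zero counts on $[8M,\pi m-8M]$ and $[-\pi m+8M,-8M]$, observes that these two are \emph{exactly} independent (because $r_{N}^{M}$ vanishes at circular distance $\geq 8M$, and the two trimmed intervals are separated by $\geq 16M$ on both sides of the circle), applies Berk to each, obtains the CLT for the sum by genuine independence, and then adds back the four boundary counts $Z_{N,S}^{M,\pm},Z_{N,L}^{M,\pm}$, whose centered versions are negligible in $L^{2}$ since their variances are $O(M)=o(N)$ by the bound from condition~(\ref{it:Brk cond2}). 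Your parenthetical alternative of running Berk ``on a single non-periodic block decomposition'' does not repair this either, since Berk's hypothesis is for a linearly ordered $\tilde{M}$-dependent array, and the periodicity of $I_{N}$ puts the first and last blocks within $O(M)$ of each other.
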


\begin{proof}[Proof of Proposition \ref{prop:CLT for Zmol}]
Recall that $Z_{N}^{M}$ is the number of zeros of $Y_{N}^{M}$ on
$I_{N}=[-\pi m, \pi m]$. We are going to prove a central limit
theorem for the number of zeros on $I_{N}^{+}:=[0, \pi m]$ (denote
similarly $I_{N}^{-}:=[-\pi m, 0]$) only. We thereby denote
$Z_{N}^{M,+}$ (resp. $Z_{N}^{M,-}$) the number of zeros of
$Y_{N}^{M}$ on $I_{N}^{+}$ (resp. $I_{N}^{-}$), and analogously,
$Z_{N}^{+}$ and $Z_{N}^{-}$ will denote the number of zeros of
$Y_{N}$ on $I_{N}^{+}$ and $I_{N}^{-}$ respectively. This also
implies a central limit theorem for $Z_{N}^{M,-}$ by the
stationarity. The problem is that $Z_{N}^{M,+}$ and $Z_{N}^{M,-}$
are not independent, so that writing $Z_{N}^{M} = Z_{N}^{M,+}
+Z_{N}^{M,-}$ a.a. does not imply the asymptotic normality for the
sum. Therefore we will have to come up with a more gentle argument
in the end of this proof.


Let $L>0$ be any integer, which we will keep fixed throughout the
proof. We divide $I_{N}^{+}$ into subintervals
$$I_{N,k}= \bigg[ (k-1)\cdot\frac{\pi m}{LN}, k\cdot\frac{\pi
m}{LN}\bigg]$$ for $1 \le k \le L N$, and denote $Z_{N,k}^{M}$ the
number of zeros of $Y_{N}^{M}(x)$ on $I_{N,k}$.

Recall that, as a function on $[-\pi m, \pi m]$, $r_{N}^{M}$ is
supported on $$[-8 \cdot M, 8 \cdot M].$$ Therefore, if $i-j
\ge 8 L M $, the random variables $\{Z_{N,k}^{M}:\: k \le i\}$
are independent of $\{ Z_{N,k}^{M}:\:  k > j\}$.

We apply Theorem \ref{thm:Berk} on the $\tilde{M}=const \cdot
M$-dependent triangular system $$\tilde{\mathcal{Z}}^{M} =
\{\tilde{Z}_{N,k}^{M} := Z_{N,k}^{M}-\E Z_{N,k}^{M}:\: 1 \le k \le
K(N) \}$$ with $K(N)=NL$. Since with probability $1$ neither of
$Y_{N}^{M}$ have zeros on the overlaps of $I_{N,k}$, we have
\begin{equation*}
Z_{N}^{M,+}-\E Z_{N}^{M,+} = \sum\limits_{k=1}^{NL}
\tilde{Z}_{N,k}^{M}
\end{equation*}
almost surely, so that to finish the proof of this Proposition, it
remains to check that $\tilde{\mathcal{Z}}^{M}$ satisfies conditions
\ref{it:Brk cond1}-\ref{it:Brk cond4} of Theorem \ref{thm:Berk}.

Proposition \ref{prop:third mom ZM bnd} implies that condition
\ref{it:Brk cond1} is satisfied with $\delta = 1$, provided that we
choose $L$ large enough. Since $\tilde{M} \sim const \cdot M$ and
$K(N) \sim const\cdot N$, the assumption $M=o(N^{1/4})$ of the
present Proposition is equivalent to condition \ref{it:Brk cond4}.

Condition \ref{it:Brk cond3} of Theorem \ref{thm:Berk} is equivalent
to $\Var(Z_{N}^{M,+}) \sim c_{1} N$ for some $c_{1} > 0$. An
application of \eqref{eq:var Z sim} together with Proposition
\ref{prop:E(Z-Zmol)^2=o(N)} and the triangle inequality, imply that
\begin{equation*}
\Var(Z_{N}^{M}) \sim c N.
\end{equation*}
One may also derive the corresponding result for $Z_{N}^{M,+}$,
starting from $\Var(Z_{N}^{+}) \sim \frac{c}{2} N$ (the proof
follows along the same lines as the proof of \eqref{eq:var Z sim})
and using \eqref{eq:E(Z+-ZM+)^2} with the triangle inequality.

It then remains to check that $ \tilde{\mathcal{Z}}^{M}$ satisfies
condition \ref{it:Brk cond2} of Theorem \ref{thm:Berk}. Using the
same approach we used in the course of the proof of \eqref{eq:var Z
sim}, one may find out that
\begin{equation}
\label{eq:bnd par sum var}
\begin{split}
&\Var(\tilde{Z}_{N,i+1}^{M}+\ldots + \tilde{Z}_{N,j}^{M}) \\&=
\frac{2}{\pi^2}\int\limits_{0} ^{(j-i)\frac{\pi m}{LN}} \bigg[
\bigg((j-i)\frac{\pi m}{LN} -x\bigg) \cdot \bigg(
\frac{{\lambda_{2,N}^{M}}'(1-{r(x)}^2)-{r'(x)}^2}{{(1-{r(x)}^2)}^{3/2}}\big(\sqrt{1-\rho(x)^2}\\&+\rho(x)\arcsin{\rho(x)}\big)
- {\lambda_{2,N}^{M}}' \bigg) \bigg] dx + (j-i)\frac{m}{LN}
\sqrt{{\lambda_{2,N}^{M}}'},
\end{split}
\end{equation}
where we use the shortcuts $r=r_{N}^{M}$, ${\lambda_{2,N}^{M}}' =
-{r_{N}^{M}}''(0)$, and
\begin{equation*}
\rho(x) = \rho_{N}^{M}(x) = \frac{r''(x)(1-r(x)^2)+r'(x)^2
r(x)}{\lambda_{2}'(1-r(x)^2)-r'(x)^2}.
\end{equation*}
We have
\begin{equation*}
(j-i)\frac{m}{LN} \sqrt{{\lambda_{2,N}^{M}}'} \ll (j-i),
\end{equation*}
since $\frac{m}{N} \le 2$ and ${\lambda_{2,N}^{M}}' = O(1)$. It
remains therefore to bound the integral in \eqref{eq:bnd par sum
var}, which we denote $J$. We write, denoting $\tau :=
(j-i)\frac{\pi m}{LN}$:
\begin{equation}
\label{eq:bnd par sum var int man} J \ll (j-i)\int\limits_{0}^{\tau}
\bigg[
\frac{{\lambda_{2}^{M}}'(1-{r}^2)-{r'}^2}{{(1-{r}^2)}^{3/2}}\big(\sqrt{1-\rho^2}+\rho\arcsin{\rho}\big)
- {\lambda_{2}^{M}}' \bigg]dx.
\end{equation}

It will suffice then to prove that the latter integral is uniformly
bounded. Let $K_{N}^{M}(x)$ be the integrand. Expanding
$K_{N}^{M}(x)$ into Taylor polynomial around the origin, as we did
in the course of the proof of \eqref{eq:var Z sim} (see the proof of
Lemma \ref{lem:int est t small}), we find that $K_{N}^{M}(x)$ is
uniformly bounded on some fixed neighbourhood of the origin (say, on
$[0,c]$). We claim, that outside $[0,c]$, the integrand is rapidly
decaying, uniformly with $N$.

It is easy to see that $r(x)$ being supported at $[0,const\cdot M]$
implies that $K(x)$ is supported in $[0,const\cdot M]$ as well (note
that we exploit here the fact that by counting only the positive zeros we disregard the dependencies
between zeros near $-\pi m$ and $\pi m$).
Moreover, on $[c,const\cdot M]$, $|K_{N}^{M}(x)| \ll
\frac{1}{x^2}$, where the constant involved in the ``$\ll$"-notation
is universal. Therefore the integral on the RHS of \eqref{eq:bnd par
sum var int man} is uniformly bounded, so that $J\ll (j-i)$, which
verifies condition \ref{it:Brk cond2} of Berk's theorem.

This concludes the proof of the asymptotic normality for
$Z_{N}^{M,+}$ (and also $Z_{N}^{M,-}$). Having that result in our
hands, we define the random variables $\hat{Z}_{N}^{M,+}$ and
$\hat{Z}_{N}^{M,-}$ to be the number of zeros of $Y_{N}^{M}$ on
$[8M,\pi m -8 M]$ and $[-\pi m+8M,-8 M]$ respectively. The random
variables $\hat{Z}_{N}^{M,\pm}$ are {\em independent}, since
$r_{N}^{M}$ is supported on $[-8M,8M]$.

In addition, let $Z_{N,S}^{M,+}$, $Z_{N,L}^{M,+}$, $Z_{N,S}^{M,-}$
and $Z_{N,L}^{M,-}$ be the number of zeros of $Y_{N}^{M}$ on
$[0,8M]$, $[\pi m -8M, \pi m]$, $[-8M,0]$ and $[-\pi m,-\pi m +8M]$
respectively. We have
\begin{equation}
\begin{split}
\label{eq:Var small int} &\Var Z_{N,S}^{M,+},\, \Var
Z_{N,L}^{M,+},\, \Var Z_{N,S}^{M,-},\, \Var Z_{N,L}^{M,-} \ll M \\&=
o(\Var Z_{N}^{M,+}), \, o(\Var Z_{N}^{M,-}),\, o(\Var Z_{N}^{M})
\end{split}
\end{equation}
by condition \eqref{it:Brk cond2} of Theorem \ref{thm:Berk} (which
we validated). Therefore $$\hat{Z}_{N}^{M,+} = Z_{N}^{M,+}-
Z_{N,S}^{M,+}-Z_{N,L}^{M,+}$$ and $$\hat{Z}_{N}^{M,-} = Z_{N}^{M,-}-
Z_{N,S}^{M,-}-Z_{N,L}^{M,-}$$ converge to the Gaussian distribution.

The independence of $\hat{Z}_{N}^{M,\pm}$ then implies the
asymptotic normality of $\hat{Z}_{N}^{M,+}+\hat{Z}_{N}^{M,-}$, and
finally we obtain the asymptotic normality of
\begin{equation*}
Z_{N}^{M}=(\hat{Z}_{N}^{M,+}+\hat{Z}_{N}^{M,-})+ Z_{N,S}^{M,+}+
Z_{N,L}^{M,+}+  Z_{N,S}^{M,-}+ Z_{N,L}^{M,-},
\end{equation*}
again by \eqref{eq:Var small int}.

\end{proof}

\section{Proof of Proposition \ref{prop:E(Z-Zmol)^2=o(N)}}
\label{sec:pr E(Z-Zmol)^2=o(N)}

\subsection{Introduction and the basic setting}
\label{sec:prop Z-Zmol into}

Recall that we have the processes $Y_{N}(x)$ and $Y_{N}^{M}(x)$,
defined on $I_{N}=[-\pi m,\pi m]$ and are interested in the
distribution of $Z_{N}$ and $Z_{N}^{M}$, the number of zeros of
$Y_{N}$ and $Y_{N}^{M}$ on $I_{N}$ respectively. The goal of the
present section is to prove the bound
\begin{equation}
\label{eq:E(Z-ZM)^2} \Var (Z_{N}-Z_{N}^{M})=o(N)
\end{equation}
on the variance of the difference. For notational convenience, we
will consider only the positive zeros, that is, let $Z_{N}^{+}$
(resp. $Z_{N}^{M,+}$) be the number of zeros of $Y_{N}$ (resp.
$Y_{N}^{M}$) on $I_{N}^{+}=[0,\pi m]$. We will prove that
\begin{equation}
\label{eq:E(Z+-ZM+)^2} \Var (Z_{N}^{+}-Z_{N}^{M,+}) = o(N),
\end{equation}
and by the stationarity, it will also imply
\begin{equation}
\label{eq:E(Z--ZM-)^2} \Var (Z_{N}^{-}-Z_{N}^{M,-}) = o(N),
\end{equation}
where we denoted the number of negative zeros in an analogous
manner. Finally, \eqref{eq:E(Z+-ZM+)^2} together with
\eqref{eq:E(Z--ZM-)^2}, will imply \eqref{eq:E(Z-ZM)^2}, by the
triangle inequality.

Let $S>0$ and $R>0$ be a couple of large integral parameters. We
divide $I_{N}^{+}$ into $K=2 S m$ equal subintervals, so that
$$I_{N,k}=\bigg[(k-1)\frac{2\pi m}{K},k\frac{2\pi m}{K}\bigg] $$ for
$1\le k \le K$.

We then write the LHS of \eqref{eq:E(Z+-ZM+)^2} as
\begin{equation}
\label{eq:Var ZN+-ZNM+ = sum cov} \Var(Z_{N}^{+}-Z_{N}^{M,+}) =
\sum\limits_{k_{1},k_{2}=1}^{K} Cov \big(
Z_{N,k_{1}}-Z_{N,k_{1}}^{M}, Z_{N,k_{2}}-Z_{N,k_{2}}^{M}\big).
\end{equation}
We divide the last summation into $3$ different ranges. That is, we
define
\begin{equation}
\label{eq:E1 def} E_{1} = \sum\limits_{|k_{1}-k_{2}|\le 1},
\end{equation}
\begin{equation}
\label{eq:E2 def} E_{2} = \sum\limits_{2\le |k_{1}-k_{2}|\le SR},
\end{equation}
and
\begin{equation}
\label{eq:E3 def} E_{3} = \sum\limits_{|k_{1}-k_{2}|\ge SR}^{K},
\end{equation}
and prove that for $1\le i \le 3$
$$\lim\limits_{N\rightarrow\infty} \frac{E_{i}}{N} = 0.$$

\subsection{Preliminaries}
\label{sec:prob prelim cov}

In addition to the covariance functions $r=r_{N}$ and
$r^{M}=r_{N}^{M}$ of $Y_{N}$ and $Y_{N}^{M}$ respectively, defined
on $I_{N}$, we introduce the joint covariance function
\begin{equation}
\label{eq:rNM0 def} r^{M,0}(x)=r_{N}^{M,0}(x) = \E\big[
Y_{N}(y)Y_{N}^{M}(y+x)\big],
\end{equation}
which is a function of $x$ indeed, by stationarity. Similarly to
\eqref{eq:|r|<=1}, one has $|r^{M,0}|\le 1$ by the Cauchy-Schwartz
inequality.

Using the spectral form \eqref{eq:spect form YN} (resp.
\eqref{eq:spect form YNM}) of $Y_{N}$ (resp. $Y_{N}^{M}$), one may
compute the Fourier expansion of $r_{N}^{M,0}$ to be
\begin{equation}
\label{eq:rNM0 Fourier repr}
\begin{split}
r_{N}^{M,0}(x) &= \frac{\sqrt{\hat{r}(0)\cdot
\hat{r}^{M}(0)}}{\sqrt{2\pi m}} + \sum\limits_{n=1}^{\infty}
\sqrt{\hat{r}(n)\hat{r}^{M}(n)} \cdot \frac{\sqrt{2}}{\sqrt{\pi m}}
\cos\bigg(\frac{n}{m}x\bigg) \\&= \sum\limits_{n=1}^{\infty}
\sqrt{\hat{r}(n)\hat{r}^{M}(n)} \cdot \frac{\sqrt{2}}{\sqrt{\pi m}}
\cos\bigg(\frac{n}{m}x\bigg) \\&= \frac{1}{N}\sum\limits_{n=1}^{N}
\sqrt{(\chi_{1\le |n| \le N} * \hat{S}_{M})(n)}\cdot \frac{1}{(2\pi
m)^{1/4}} \cos\bigg(\frac{n}{m}x\bigg).
\end{split}
\end{equation}
In particular, $r_{N}^{M,0}$ is {\em even}, and
\begin{equation}
\label{eq:hatrNM0} \hat{r}_{N}^{M,0}(n) = \sqrt{\hat{r}_{N}(n)
\hat{r}_{N}^{M}(n)} = \bigg(\frac{\pi m}{8}\bigg)^{1/4}
\frac{1}{N}\cdot \chi_{1\le |n| \le N}(n) \cdot \sqrt{(\chi_{1\le
|n| \le N} * \hat{S}_{M})(n)}.
\end{equation}

Recall that to determine the second moment $\E Z_{X}^2$ of a process
$X$, we naturally encountered the distribution of the random vector
\eqref{eq:vec val der t1t2}. Similarly, to evaluate the covariances
in \eqref{eq:Var ZN+-ZNM+ = sum cov}, one naturally encounters the
distribution of vectors
$$W_{1}=\big(Y_{N}^{M}(x_{1}),Y_{N}^{M}(x_{2}),{Y_{N}^{M}}'(x_{1}),{Y_{N}^{M}}'(x_{2})\big)$$
with probability density
$\phi_{N,M}^{x_{1},x_{2}}(u_{1},u_{2},v_{1},v_{2})$ and
$$W_{2}=\big(Y_{N}(x_{1}),Y_{N}^{M}(x_{2}),Y_{N}'(x_{1}),{Y_{N}^{M}}'(x_{2})\big)$$
with probability density
$\phi_{N,M,0}^{x_{1},x_{2}}(u_{1},u_{2},v_{1},v_{2})$ for some
$x_{1},x_{2}\in I_{N}$. As before, the distributions
$\phi_{N,M}^{x_{1},x_{2}}$ and $\phi_{N,M,0}^{x_{1},x_{2}}$ depend
only on $x=x_{2}-x_{1}$ by stationarity, and we will denote
$\phi_{N,M}^{x}=\phi_{N,M}^{x_{1},x_{2}}$ and
$\phi_{N,M,0}^{x}=\phi_{N,M,0}^{x_{1},x_{2}}$ accordingly.

Similarly to the mean zero Gaussian distribution with covariance
matrix \eqref{eq:cov mat} of the random vector \eqref{eq:vec val der
t1t2}, both $W_{1}$ and $W_{2}$ are mean zero Gaussian with
covariance matrices
\begin{equation}
\label{eq:Sigma NM def}
\Sigma_{N,M}(x) = \left(\begin{matrix}  1 &r_{N}^{M}(x) &0 &{r_{N}^{M}}'(x) \\
r_{N}^{M}(x) &1 &-{r_{N}^{M}}'(x) &0 \\ 0 &-{r_{N}^{M}}'(x)
&{\lambda_{2,N}^{M}}' &-{r_{N}^{M}}''(x)
\\ {r_{N}^{M}}'(x) &0 &-{r_{N}^{M}}''(x) &{\lambda_{2,N}^{M}}'
\end{matrix} \right)
\end{equation}
and
\begin{equation}
\label{eq:Sigma NM0 def}
\Sigma_{N,M,0}(x) = \left(\begin{matrix} 1 &r_{N}^{M,0}(x) &0 &{r_{N}^{M,0}}'(x) \\ r_{N}^{M,0}(x) &1 &-{r_{N}^{M,0}}'(x) &0 \\
0 &-{r_{N}^{M,0}}'(x) &\lambda_{2,N}' &-{r_{N}^{M,0}}''(x) \\
{r_{N}^{M,0}}'(x) &0 &-{r_{N}^{M,0}}''(x) & {\lambda_{2,N}^{M}}'
\end{matrix} \right),
\end{equation}
where, as usual, we denote
\begin{equation*}
\lambda_{2,N}' := -r_{N}''(0);\; {\lambda_{2,N}^{M}}' :=
{-r_{N}^{M}}''(0).
\end{equation*}

Similarly to $\Sigma(t)$ in \eqref{eq:cov mat}, both
$\Sigma_{N,M}(x)$ and $\Sigma_{N,M,0}(x)$ are nonsingular for $x\ne
0$, and so
\begin{equation}
\label{eq:phi NM def} \phi_{N,M}^{x}(w) =  \frac{1}{(2\pi)^2
\sqrt{\det{\Sigma_{N,M}(x)}}}e^{-\frac{1}{2}
w\Sigma_{N,M}(x)^{-1}w^t}
\end{equation}
and
\begin{equation}
\label{eq:phi NM0 def} \phi_{N,M,0}^{x}(w) = \frac{1}{(2\pi)^2
\sqrt{\det{\Sigma_{N,M,0}(x)}}}e^{-\frac{1}{2}
w\Sigma_{N,M,0}(x)^{-1}w^t}.
\end{equation}

We denote
\begin{equation}
\label{eq:tildphiNM,NM0 def} \tilde{\phi}_{N,M}^{x}(v_{1},v_{2}) :=
\phi_{N,M}^{x}(0,0,v_{1},v_{2}) ;\quad
\tilde{\phi}_{N,M,0}^{x}(v_{1},v_{2}) :=
\phi_{N,M,0}^{x}(0,0,v_{1},v_{2})
\end{equation}
and define the random vector
\begin{equation*}
\big(V_{1}=V_{1}(x),V_{2}=V_{2}(x)\big) = (Y_{N}'(0),
{Y_{N}^{M}}'(x))
\end{equation*}
conditioned upon $Y_{N}(0)={Y_{N}^{M}}(x)=0$ with probability
density function $\psi_{N,M,0}^{x}(v_{1},v_{2})$. The random vector
$(V_{1},V_{2})$ has a mean zero Gaussian distribution with
covariance matrix
\begin{equation}
\label{eq:OmegaNM0 cond def} \Omega_{N,M,0}^{x} =
\left(\begin{matrix}\lambda_{2,N}' -
\frac{{r_{N}^{M,0}}'(x)^2}{1-r_{N}^{M,0}(x)^2} & {-r_{N}^{M,0}}''(x)
-\frac{r_{N}^{M,0}(x)\cdot {r_{N}^{M,0}}'(x)^2 }{1-r_{N}^{M,0}(x)^2} \\
{-r_{N}^{M,0}}''(x) -\frac{r_{N}^{M,0}(x)\cdot {r_{N}^{M,0}}'(x)^2
}{1-r_{N}^{M,0}(x)^2} & {\lambda_{2,N}^{M}}'-
\frac{{r_{N}^{M,0}}'(x)^2}{1-r_{N}^{M,0}(x)^2}\end{matrix} \right),
\end{equation}
The matrix $\Omega_{N,M,0}^{x}$ is regular for $x\ne 0$. We have,
analogously to \eqref{eq:phi=psi/det}
\begin{equation*}
\tilde{\phi}_{N,M,0}^{x}(v_{1},v_{2}) =
\frac{\psi_{N,M,0}^{x}(v_{1},v_{2})}{2\pi
\sqrt{1-r_{N}^{M,0}(x)^2}}.
\end{equation*}

Similarly, let $\psi_{N,M}^{x}(v_{1},v_{2})$ be the probability
density function of $({Y_{N}^{M}}'(0),{Y_{N}^{M}}'(x))$ conditioned
upon ${Y_{N}^{M}}(0)={Y_{N}^{M}}(x)=0$. One then has
\begin{equation}
\label{eq:phiNM=psiNM/det} \tilde{\phi}_{N,M}^{x}(v_{1},v_{2}) =
\frac{\psi_{N,M}^{x}(v_{1},v_{2})}{2\pi \sqrt{1-r_{N}^{M}(x)^2}}.
\end{equation}

\subsection{Auxiliary Lemmas}

\begin{lemma}
\label{lem:|rNM-rN|L2->0} One has the following estimates
\begin{enumerate}

\item \label{it:rNM-rN L2 norm}
\begin{equation*}
\|r_{N}^{M} -r_{N}\|_{L^{2}(I_{N})} =
O\bigg(\frac{1}{\sqrt{M}}\bigg),
\end{equation*}

\item \label{it:rNM0-rN L2 norm}
\begin{equation*}
\|r_{N}^{M,0} -r_{N}\|_{L^{2}(I_{N})} =
O\bigg(\frac{1}{M^{1/4}}\bigg),
\end{equation*}

\item \label{it:rNM''-rN'' L2 norm}
\begin{equation*}
\|{r_{N}^{M}}'' -r_{N}''\|_{L^{2}(I_{N})} =
O\bigg(\frac{1}{\sqrt{M}}\bigg),
\end{equation*}

\item \label{it:rNM0''-rN'' L2 norm}
\begin{equation*}
\|{r_{N}^{M,0}}'' -r_{N}''\|_{L^{2}(I_{N})} =
O\bigg(\frac{1}{M^{1/4}}\bigg).
\end{equation*}

\item \label{it:rNM'-rN' L2 norm}
\begin{equation*}
\|{r_{N}^{M}}' -r_{N}'\|_{L^{2}(I_{N})} =
O\bigg(\frac{1}{\sqrt{M}}\bigg),
\end{equation*}

\item \label{it:rNM0'-rN' L2 norm}
\begin{equation*}
\|{r_{N}^{M,0}}' -r_{N}'\|_{L^{2}(I_{N})} =
O\bigg(\frac{1}{M^{1/4}}\bigg).
\end{equation*}

\end{enumerate}
\end{lemma}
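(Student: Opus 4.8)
The plan is to split the six bounds into two groups. The three estimates for $r_N^M$ I would establish directly in physical space, using $r_N^M=r_N\cdot S_M$; the three estimates for $r_N^{M,0}$ I would then deduce from these by a short argument on the Fourier side, based on the identity $\hat r_N^{M,0}(n)=\sqrt{\hat r_N(n)\hat r_N^M(n)}$ from \eqref{eq:hatrNM0}.

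For the first group, for $j=0,1,2$ the Leibniz rule gives $(r_N^M)^{(j)}-r_N^{(j)}=r_N^{(j)}(S_M-1)+(\text{terms each carrying a factor }S_M'\text{ or }S_M'')$. I would combine two sets of pointwise bounds, all uniform in $N$: first, that $r_N=f_N$ and its first two derivatives decay like $\tfrac1{|x|+1}$ on $I_N$ -- this follows from the explicit (Dirichlet-kernel) form of $f_N$, or from the approximations $f_N=g+O(1/m)$, $f_N'=g'+O(\tfrac{x}{m^2}+\tfrac1{mx})$ and the analogue for $f_N''$ in \eqref{eq:f est g}, \eqref{eq:f' est g'}, \eqref{eq:f'' est g''}, together with the trivial decay of $g=\sin x/x$ and its derivatives; and second, that $S_M$ is supported in $[-8M,8M]$ with $0\le S_M\le1$, that $|S_M-1|\ll(|x|/M)^2$ on $[-2M,2M]$ by \eqref{eq:S_M=1+...}, and that $|S_M'|\ll1/M$, $|S_M''|\ll1/M^2$. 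The key estimate is that for any $h$ with $|h(x)|\ll\tfrac1{|x|+1}$,
\[
\int_{I_N}|h(x)|^2\,|S_M(x)-1|^2\,dx\ll\frac1M ,
\]
obtained by splitting at $|x|\sim M$: the inner part is $\ll M^{-4}\int_1^M x^{-2}x^4\,dx\ll M^{-1}$ thanks to the quadratic vanishing of $S_M-1$ at the origin, and the outer part is $\ll\int_M^{\pi m}x^{-2}\,dx\ll M^{-1}$. Since $S_M'$ and $S_M''$ are supported in $[-8M,8M]$ and $r_N$, $r_N'$ have $L^2(I_N)$ norm $O(1)$, the remaining Leibniz terms are $O(M^{-2})$ after squaring. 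Hence $\|(r_N^M)^{(j)}-r_N^{(j)}\|_{L^2(I_N)}^2\ll M^{-1}$ for $j=0,1,2$, which is \eqref{it:rNM-rN L2 norm}, \eqref{it:rNM''-rN'' L2 norm} and \eqref{it:rNM'-rN' L2 norm}.

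For the second group, since $\hat r_N,\hat r_N^M\ge0$ we have $\hat r_N^{M,0}(n)-\hat r_N(n)=\sqrt{\hat r_N(n)}\,(\sqrt{\hat r_N^M(n)}-\sqrt{\hat r_N(n)})$, so the elementary inequality $(\sqrt a-\sqrt b)^2\le|a-b|$ for $a,b\ge0$ gives $(\hat r_N^{M,0}(n)-\hat r_N(n))^2\le\hat r_N(n)\,|\hat r_N^M(n)-\hat r_N(n)|$. Multiplying by the weight $(n/m)^{2j}$ (in the paper's conventions, differentiating $j$ times multiplies the $n$-th Fourier coefficient by $\pm(n/m)^{j}$), summing over $n$, and applying Parseval together with Cauchy--Schwarz, I obtain for $j=0,1,2$
\[
\|(r_N^{M,0})^{(j)}-r_N^{(j)}\|_{L^2(I_N)}^2\le\|r_N^{(j)}\|_{L^2(I_N)}\cdot\|(r_N^M)^{(j)}-r_N^{(j)}\|_{L^2(I_N)} .
\]
From the explicit $\hat r_N(n)^2=\tfrac{\pi m}{2N^2}\chi_{1\le|n|\le N}(n)$ one gets $\|r_N^{(j)}\|_{L^2(I_N)}^2=\sum_n(n/m)^{2j}\hat r_N(n)^2\asymp1$, while the last factor is $\ll M^{-1/2}$ by the first group; hence $\|(r_N^{M,0})^{(j)}-r_N^{(j)}\|_{L^2(I_N)}\ll M^{-1/4}$, which is \eqref{it:rNM0-rN L2 norm}, \eqref{it:rNM0''-rN'' L2 norm} and \eqref{it:rNM0'-rN' L2 norm}.

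The main obstacle is getting the exponent right in the first group: it is precisely the quadratic vanishing of $S_M-1$ at the origin -- a consequence of the degree of smoothing built into $S_M$ -- that produces the sharp $M^{-1/2}$ rather than a weaker bound, and it is the passage through the geometric mean $\sqrt{\hat r_N\hat r_N^M}$ in the second group, where only the square-root inequality (not a Lipschitz bound) is available, that forces the further loss to $M^{-1/4}$. One must also keep the pointwise decay of $r_N$, $r_N'$, $r_N''$ and the $O(1)$ bounds on $\|r_N^{(j)}\|_{L^2(I_N)}$ uniform in $N$; both are immediate from the explicit Fourier coefficients of $r_N$.
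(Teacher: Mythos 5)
Your proof is correct and follows essentially the same two-step strategy as the paper: a physical-space estimate for $r_N^M$ based on $r_N^M-r_N=r_N(S_M-1)$ and a split at $|x|\sim M$, followed by a Fourier-side argument for $r_N^{M,0}$ using $\hat r_N^{M,0}=\sqrt{\hat r_N\hat r_N^M}$, the inequality $(\sqrt a-\sqrt b)^2\le|a-b|$, Cauchy--Schwarz, and Parseval. The only departure is cosmetic: the paper obtains the first-derivative bounds (5)--(6) by integration by parts from (1)/(3) and (2)/(4), whereas you handle all $j=0,1,2$ uniformly via Leibniz and the weight $(n/m)^{2j}$; both routes give the same bounds.
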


\begin{proof}

First, we notice that \eqref{it:rNM'-rN' L2 norm} (resp.
\eqref{it:rNM0'-rN' L2 norm}) follows from \eqref{it:rNM-rN L2 norm}
with \eqref{it:rNM''-rN'' L2 norm} (resp. \eqref{it:rNM0-rN L2 norm}
with \eqref{it:rNM0''-rN'' L2 norm}) by integration by parts and the
Cauchy-Schwartz inequality.

By the symmetry of all the functions involved, it is sufficient to
bound $\|\cdot\|_{L^{2}(I_{N}^{+})}$. To establish \eqref{it:rNM-rN
L2 norm}, we note that for $|x| \le M$, one has
\begin{equation}
\label{eq:SM=1+O(x/M)^2 org}
S_{M}(x) = 1+O\bigg(\frac{x}{M}\bigg)^2,
\end{equation}
and both $r_{N}$ and
$r_{N}^{M}$ are rapidly decaying for large $x$, since $$|r_{N}(x)|
\ll \frac{1}{x},$$ and $S_{M}$ is bounded, with constants
independent of $N$ or $M$. Thus, we have
\begin{equation*}
\begin{split}
&\|r_{N}^{M}-r_{N} \|^{2}_{L^{2}(I_{N})} = \int\limits_{0}^{\pi m}
(r_{N}^{M}(x)-r_{N}(x))^2 dx \\&= \int\limits_{0}^{\pi m}
(r_{N}(x)(1-S_{M}(x)))^2 dx \ll \frac{1}{M^4} \int\limits_{0}^{M}
r_{N}(x)^2 x^4 dx + \int\limits_{M}^{\pi m} r_{N}(x)^2 dx \\& \ll
\frac{1}{M^4} \int\limits_{0}^{M} (r_{N}(x)^2 x^2) \cdot x^2 dx +
\int\limits_{M}^{\pi m} \frac{dx}{x^2} \ll \frac{1}{M^4}
\int\limits_{0}^{M}x^2 dx + \frac{1}{M} \ll \frac{1}{M}.
\end{split}
\end{equation*}
It is easy to establish \eqref{it:rNM''-rN'' L2 norm} using a
similar approach.

To prove \eqref{it:rNM0-rN L2 norm}, we will use the Fourier series
representation \eqref{eq:rN Fourier repr} of $r_{N}$, and its
analogue \eqref{eq:rNM0 Fourier repr} for $r_{N}^{M,0}$ with
Parseval's identity. We then have by \eqref{eq:hatrNM0}
\begin{equation*}
\begin{split}
&\|r_{N}^{M,0}-r_{N} \|^{2} _{L^{2}(I_{N})} =
\|\hat{r}_{N}^{M,0}-\hat{r}_{N} \|_{l^{2}(\Z)} ^2 \\&=
2\sum\limits_{n=1}^{N} \hat{r}_{N}(n)\cdot
\big(\sqrt{\hat{r}_{N}(n)} - \sqrt{\hat{r}_{N}^{M}(n)}  \big)^{2}
\le 2\sum\limits_{n=1}^{N} \hat{r}_{N}(n) \cdot \big| \hat{r}_{N}(n)
- \hat{r}_{N}^{M}(n)\big|,
\end{split}
\end{equation*}
since for $a,b\ge 0$,
\begin{equation}
\label{eq:(a-b)^2 <= |a^2-b^2|} (a-b)^2\le |a^2-b^2|.
\end{equation}
Continuing, we use the Cauchy-Schwartz inequality to obtain
\begin{equation*}
\begin{split}
\|r_{N}^{M,0}-r_{N} \|^{2} _{L^{2}(I_{N})} &\ll \| \hat{r}_{N}
\|_{l^{2}(\Z)} \cdot \|\hat{r}_{N}-\hat{r}_{N}^{M} \|_{l^{2}(\Z)}
\\&= \| \hat{r}_{N} \|_{l^{2}(\Z)} \cdot \|r_{N}-r_{N}^{M} \|_{l^{2}(\Z)} \ll \frac{1}{\sqrt{M}},
\end{split}
\end{equation*}
by \eqref{it:rNM-rN L2 norm} of the present Lemma, and the obvious
estimate $\| \hat{r}_{N}\| \ll 1$. This proves part
\eqref{it:rNM0-rN L2 norm} of this Lemma.

It is now easy to establish part \eqref{it:rNM0''-rN'' L2 norm} of
the present Lemma, using $$\hat{f''}(n) = -\frac{n^2}{m^2}
\hat{f}(n).$$

\end{proof}

\begin{lemma}
\label{lem:r,rM,rM0 Lip} The functions $r_{N}(x)$, $r_{N}^{M}(x)$,
$r_{N}^{M,0}(x)$ and their first couple of derivatives are
Lipschitz, uniformly with $N$, i.e. satisfy
\begin{equation}
\label{eq:A-Lipschitz} |h(x)-h(y)| \le A|x-y|
\end{equation}
for some universal constant $A>0$.
\end{lemma}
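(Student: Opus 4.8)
The plan is to deduce each Lipschitz estimate from a uniform bound on the relevant derivatives: a $C^{1}$ function whose derivative is bounded by $A$ satisfies \eqref{eq:A-Lipschitz} with that same $A$. So it suffices to show that $r_{N}$, $r_{N}^{M}$, $r_{N}^{M,0}$ and their derivatives up to (say) third order are bounded by a universal constant, independent of $N$ and $M$. For $r_{N}$ this is immediate: from \eqref{eq:r def}, $r_{N}(x)=\frac{1}{N}\sum_{n=1}^{N}\cos(nx/m)$ with $m=N+\frac12$, so differentiating term by term gives $|r_{N}^{(j)}(x)|\le\frac{1}{N}\sum_{n=1}^{N}(n/m)^{j}$ for every $j\ge 0$, and since $1\le n\le N<m$ each ratio $n/m$ is $<1$; hence $|r_{N}^{(j)}(x)|\le 1$ for all $x$ and all $j$. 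In particular $r_{N},r_{N}',r_{N}''$ are $1$-Lipschitz, uniformly in $N$.

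For $r_{N}^{M}=r_{N}\cdot S_{M}$ (see \eqref{eq:rNM def}) I would use the structure of the mollifier recorded after \eqref{eq:S_M=1+...}: $S_{M}(x)=P(|x|/M)$ for a fixed, $M$-independent, $6$-times continuously differentiable piecewise polynomial $P$ of degree $7$. Consequently $S_{M}^{(j)}(x)=M^{-j}P^{(j)}(|x|/M)$, so $\|S_{M}^{(j)}\|_{\infty}\le\|P^{(j)}\|_{\infty}$ for $M\ge 1$ and $0\le j\le 6$, a universal bound. Combining this with the bounds on $r_{N}^{(k)}$ from the previous step via the Leibniz rule $(r_{N}^{M})^{(j)}=\sum_{k=0}^{j}\binom{j}{k}r_{N}^{(k)}S_{M}^{(j-k)}$ shows that $r_{N}^{M}$ and its derivatives up to order $6$ are bounded by a universal constant, hence $r_{N}^{M},(r_{N}^{M})',(r_{N}^{M})''$ are uniformly Lipschitz.

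For $r_{N}^{M,0}$ I would exploit that, by \eqref{eq:hatrNM0}, $\hat{r}_{N}^{M,0}(n)=\sqrt{\hat{r}_{N}(n)\,\hat{r}_{N}^{M}(n)}$ vanishes whenever $|n|>N$, since $\hat{r}_{N}$ does. Hence the Fourier series \eqref{eq:rNM0 Fourier repr} is a \emph{finite} cosine sum $r_{N}^{M,0}(x)=\sum_{n=1}^{N}c_{n}\cos(nx/m)$ with all $c_{n}\ge 0$. Evaluating at $x=0$ and using the Cauchy--Schwarz bound $|r_{N}^{M,0}|\le 1$ already noted after \eqref{eq:rNM0 def}, one gets $\sum_{n=1}^{N}c_{n}=r_{N}^{M,0}(0)\le 1$. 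Differentiating the finite sum term by term then yields $|(r_{N}^{M,0})^{(j)}(x)|\le\sum_{n=1}^{N}c_{n}(n/m)^{j}\le\sum_{n=1}^{N}c_{n}\le 1$ for every $j\ge 0$, again because $n/m<1$; in particular $r_{N}^{M,0},(r_{N}^{M,0})',(r_{N}^{M,0})''$ are $1$-Lipschitz uniformly in $N$ and $M$, which finishes the proof.

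No step here is genuinely hard; the only point deserving attention is the clean estimate $\|S_{M}^{(j)}\|_{\infty}=O(1)$, which relies on the scaling $S_{M}(x)=P(|x|/M)$ with $P$ universal — but this is built into the construction of $S_{M}$, so there is no real obstacle.
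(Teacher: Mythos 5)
Your proof is correct, and for the $r_{N}^{M,0}$ part it takes a genuinely different (and cleaner) route than the paper. For $r_{N}$ and $r_{N}^{M}$ the two arguments coincide: both reduce to the trivial bound $\frac{1}{N}\sum_{n\le N}(n/m)^{j}\le 1$ and the uniform bounds on $S_{M}^{(j)}$ coming from the scaling $S_{M}(x)=P(|x|/M)$ (the paper just calls this ``clear''). The divergence is in the $r_{N}^{M,0}$ case. The paper's proof shows that the individual Fourier coefficients $\frac{1}{(2\pi m)^{1/4}}\sqrt{(\chi_{1\le|n|\le N}*\hat S_{M})(n)}$ are $O(1)$; to do so it invokes the explicit formula \eqref{eq:SM Fourier transf comp}--\eqref{eq:chi[-M,M] Fourier transf} for $\hat S_{M}$ and runs a two-regime estimate over $|k|\lessgtr N/M$. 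You sidestep all of that: since every $\hat r_{N}^{M,0}(n)$ is nonnegative (being $\sqrt{\hat r_{N}(n)\hat r_{N}^{M}(n)}$ with both factors $\ge 0$ by \eqref{eq:rM Fourier trans}) and the spectrum is supported on $1\le n\le N<m$, evaluating the cosine series at $x=0$ gives $\sum_{n}c_{n}=r_{N}^{M,0}(0)\le 1$ directly from the Cauchy--Schwarz bound already recorded after \eqref{eq:rNM0 def}, and then $|(r_{N}^{M,0})^{(j)}(x)|\le\sum_{n}c_{n}(n/m)^{j}\le\sum_{n}c_{n}\le 1$ for every $j$. This trades the paper's computation (which, incidentally, quantifies the coefficients more sharply than needed) for a one-line positivity argument. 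Both are valid; yours yields the explicit Lipschitz constant $1$ with no dependence on $M$ or $N$ and no need for the $\hat S_M$ estimates at this point in the argument.
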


\begin{proof}
The statement is clear for $r_{N}(x) =
\frac{1}{N}\sum\limits_{n=1}^{N} \cos(\frac{n}{m} x)$, as well as
$r_{N}^{M}(x) = r_{N}(x) S_{M}(x)$ (due to the fact that $S_{M}$ and
$S_{M}'$ are bounded).

It then remains to prove the result for $r_{N}^{M,0}$. From the
representation \eqref{eq:rNM0 Fourier repr} it is clear that it
would be sufficient to prove that the coefficients
$$\frac{1}{(2\pi m)^{1/4}} \cdot \sqrt{(\chi_{1\le |n| \le N} * \hat{S}_{M})(n)}$$
are uniformly bounded. We will bound the square
\begin{equation}
\label{eq:norm Fourier coeff rNM0} \frac{1}{(2\pi m)^{1/2}} \cdot
(\chi_{1\le |n| \le N} * \hat{S}_{M})(n).
\end{equation}
Using \eqref{eq:SM Fourier transf comp} with \eqref{eq:chi[-M,M]
Fourier transf}, we bound $\hat{S}_{M}(n)$ by
\begin{equation}
\label{eq:SM(n) est} \hat{S}_{M}(n) \ll
\begin{cases}
\frac{m^{15/2}}{M^{7}} \bigg(\frac{\sin\big(\frac{nM}{m}\big)}{n}
\bigg)^{8} , \: &n\ne 0 \\\frac{M}{\sqrt{m}},\: &n=0
\end{cases}
\end{equation}
so that the coefficients \eqref{eq:norm Fourier coeff rNM0} are
bounded by
\begin{equation*}
\begin{split}
&\ll \frac{1}{\sqrt{N}} \cdot \frac{m^{15/2}}{M^{7}}
\sum\limits_{\substack{k=n-N \\ k\ne 0}}^{n+N}
\bigg(\frac{\sin\big(\frac{kM}{m}\big)}{k} \bigg)^{8}
+\frac{1}{\sqrt{m}} \cdot \frac{M}{\sqrt{m}}
\\&\ll \frac{M}{N} \sum\limits_{1\le |k| \le \frac{N}{M}}
\bigg(\frac{\sin\big(\frac{kM}{m}\big)}{\frac{k M}{m}} \bigg)^{8} +
\frac{N^{7}}{M^{7}} \sum\limits_{|k| > \frac{N}{M}} \frac{1}{k^{8}}
+\frac{M}{N}
\\&\ll \frac{M}{N} \cdot \frac{N}{M} + \frac{N^{7}}{M^{7}} \cdot
\frac{1}{(N/M)^7} +1  \ll 1.
\end{split}
\end{equation*}
This proves that the squared coefficients \eqref{eq:norm Fourier
coeff rNM0} are uniformly bounded, and thus, that $r_{N}^{M,0}$
satisfy the Lipshitz condition \eqref{eq:A-Lipschitz} with some
universal constant $A>0$.

\end{proof}

\begin{lemma}
\label{lem:small L2 norm Lipschitz} Let $I=[a,b]$ be any interval
and $h:I\rightarrow\R$ a Lipschitz function satisfying
\eqref{eq:A-Lipschitz}. Then for every $x\in I$,
\begin{equation*}
|h(x)| \le 2A^{1/3}\| h\|_{L^{2}(I)} ^{2/3},
\end{equation*}
provided that
\begin{equation}
\label{eq:b-a>|h|/2A} b-a > \frac {\max\limits_{x\in
I}{\big|h(x)\big|}}{2A}.
\end{equation}

\end{lemma}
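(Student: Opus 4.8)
The plan is to pass to the point where $|h|$ is largest and use the Lipschitz bound to trap a definite amount of $L^{2}$-mass near that point. Since $I=[a,b]$ is compact and $h$ is continuous, $|h|$ attains its maximum $m_{0}:=\max_{x\in I}|h(x)|$ at some $x_{0}\in I$. If $m_{0}=0$ the inequality is trivial, so assume $m_{0}>0$ and put $d:=m_{0}/(2A)$.

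First I would observe that, by \eqref{eq:A-Lipschitz}, for any $x\in I$ with $|x-x_{0}|\le d$ we have $|h(x)|\ge |h(x_{0})|-A|x-x_{0}|\ge m_{0}-Ad=m_{0}/2$. Set $J:=I\cap[x_{0}-d,\,x_{0}+d]$, a subinterval of $I$ on which $|h|\ge m_{0}/2$ pointwise.

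The one place where some care is needed --- and, such as it is, the ``main obstacle'' --- is to see that $J$ is not too short, namely $|J|\ge d$, even when $x_{0}$ sits near an endpoint of $I$; this is exactly where the hypothesis \eqref{eq:b-a>|h|/2A}, i.e. $b-a>d$, is used. One checks that $\min(b,x_{0}+d)\ge x_{0}$ trivially and $\min(b,x_{0}+d)\ge a+d$ (using $b>a+d$ together with $x_{0}\ge a$), hence $\min(b,x_{0}+d)\ge\max(x_{0},\,a+d)=\max(a,x_{0}-d)+d$, which gives $|J|=\min(b,x_{0}+d)-\max(a,x_{0}-d)\ge d$.

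It then remains only to assemble the estimate
\begin{equation*}
\|h\|_{L^{2}(I)}^{2}\ \ge\ \int_{J}h(x)^{2}\,dx\ \ge\ \Big(\frac{m_{0}}{2}\Big)^{2}|J|\ \ge\ \frac{m_{0}^{2}}{4}\cdot\frac{m_{0}}{2A}\ =\ \frac{m_{0}^{3}}{8A},
\end{equation*}
from which $m_{0}\le(8A)^{1/3}\|h\|_{L^{2}(I)}^{2/3}=2A^{1/3}\|h\|_{L^{2}(I)}^{2/3}$; since $|h(x)|\le m_{0}$ for every $x\in I$, this is precisely the claimed bound.
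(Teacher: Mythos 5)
Your proof is correct and follows essentially the same approach as the paper's: trap an interval $J$ of length at least $|h(x_{0})|/(2A)$ around a point $x_{0}$, use the Lipschitz bound to show $|h|\ge |h(x_{0})|/2$ on $J$, and integrate. The only cosmetic difference is that you specialize $x_{0}$ to the maximizer of $|h|$ before running the estimate, while the paper runs it for arbitrary $x_{0}$; both give the same inequality.
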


\begin{proof}
Let $x_{0}\in I$ and $$J:= I\cap
\bigg[x_{0}-\frac{|h(x_{0})|}{2A},x_{0}+\frac{|h(x_{0})|}{2A}\bigg].$$
Our assumption \eqref{eq:b-a>|h|/2A} implies that interval $J$ has
length $|J| \ge \frac{|h(x_{0})|}{2A}$, and moreover, on $J$ we have
\begin{equation*}
|h(x)| \ge \frac{|h(x_{0})|}{2}
\end{equation*}
by \eqref{eq:A-Lipschitz}. Thus we have
\begin{equation*}
\|h\|_{L^{2}(I)}^{2} \ge \int\limits_{J} h(x)^2 dx \ge |J| \cdot
\frac{h(x_{0})^2}{4} \ge \frac{|h(x_{0})|}{2A} \cdot
\frac{h(x_{0})^2}{4} = \frac{1}{8A} |h(x_{0})|^3,
\end{equation*}
which is equivalent to the statement of this Lemma.
\end{proof}

Lemmas \ref{lem:|rNM-rN|L2->0},  \ref{lem:r,rM,rM0 Lip} and
\ref{lem:small L2 norm Lipschitz} together imply
\begin{corollary}
\label{cor:rM->r unif}

For every $x\in I_{N}$, one has
\begin{equation*}
|r_{N}^{M}(x)-r_{N}(x)| = O\bigg(\frac{1}{M^{1/3}}\bigg),
\end{equation*}
\begin{equation*}
|r_{N}^{M,0}(x)-r_{N}(x)| = O\bigg(\frac{1}{M^{1/6}}\bigg),
\end{equation*}
\begin{equation*}
|{r_{N}^{M}}'(x)-r_{N}'(x)| = O\bigg(\frac{1}{M^{1/3}}\bigg),
\end{equation*}
\begin{equation*}
|{r_{N}^{M,0}}'(x)-r_{N}'(x)| = O\bigg(\frac{1}{M^{1/6}}\bigg),
\end{equation*}
\begin{equation*}
|{r_{N}^{M}}''(x)-r_{N}''(x)| = O\bigg(\frac{1}{M^{1/3}}\bigg),
\end{equation*}
\begin{equation*}
|{r_{N}^{M,0}}''(x)-r_{N}''(x)| = O\bigg(\frac{1}{M^{1/6}}\bigg),
\end{equation*}
uniformly w.r.t. $x$ and $N$.
\end{corollary}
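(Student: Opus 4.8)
The plan is to deduce all six pointwise bounds from Lemma~\ref{lem:small L2 norm Lipschitz}, applied on the interval $I=I_N=[-\pi m,\pi m]$ to each of the six difference functions $r_N^M-r_N$, ${r_N^M}'-r_N'$, ${r_N^M}''-r_N''$, $r_N^{M,0}-r_N$, ${r_N^{M,0}}'-r_N'$ and ${r_N^{M,0}}''-r_N''$ in turn; write $h$ for a generic one of them. First I would note that $h$ is a difference of two functions drawn from $\{r_N,r_N^M,r_N^{M,0}\}$ or from their first or second derivatives, and by Lemma~\ref{lem:r,rM,rM0 Lip} every function on that list is Lipschitz with a universal constant; hence $h$ satisfies \eqref{eq:A-Lipschitz} with a universal constant $A>0$ (twice the constant furnished by that lemma).

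Before applying Lemma~\ref{lem:small L2 norm Lipschitz} I would check its length hypothesis \eqref{eq:b-a>|h|/2A}. Here $b-a=|I_N|=2\pi m\to\infty$, whereas $\max_{x\in I_N}|h(x)|=O(1)$ uniformly in $N$ and $M$: for $h\in\{r_N^M-r_N,\ r_N^{M,0}-r_N\}$ this is immediate from $|r_N|,|r_N^M|,|r_N^{M,0}|\le 1$ (Cauchy-Schwartz), and for the derivatives it follows from the explicit formula $r_N(x)=\frac{1}{N}\sum_{n=1}^N\cos(nx/m)$ (which gives $|r_N'|,|r_N''|\ll 1$ since $m\sim N$), from the boundedness of $S_M,S_M',S_M''$, and --- for $r_N^{M,0}$ --- from the fact, read off from the proof of Lemma~\ref{lem:r,rM,rM0 Lip}, that the $\cos(nx/m)$-coefficient in the series \eqref{eq:rNM0 Fourier repr} is $O(1/N)$ uniformly, so that $|{r_N^{M,0}}''(x)|\ll\frac{1}{Nm^2}\sum_{n\le N}n^2\ll 1$. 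Thus \eqref{eq:b-a>|h|/2A} holds for every sufficiently large $N$; the finitely many remaining values of $N$ are harmless, as all the asserted estimates are trivial for bounded $N$.

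With both hypotheses in force, Lemma~\ref{lem:small L2 norm Lipschitz} gives, for every $x\in I_N$,
\begin{equation*}
|h(x)|\le 2A^{1/3}\,\|h\|_{L^2(I_N)}^{2/3},
\end{equation*}
with $A$ universal, and it remains only to substitute the $L^2$-estimates of Lemma~\ref{lem:|rNM-rN|L2->0}. For $h\in\{r_N^M-r_N,\ {r_N^M}'-r_N',\ {r_N^M}''-r_N''\}$ these give $\|h\|_{L^2(I_N)}=O(M^{-1/2})$, whence $|h(x)|=O\big((M^{-1/2})^{2/3}\big)=O(M^{-1/3})$; for $h\in\{r_N^{M,0}-r_N,\ {r_N^{M,0}}'-r_N',\ {r_N^{M,0}}''-r_N''\}$ they give $\|h\|_{L^2(I_N)}=O(M^{-1/4})$, whence $|h(x)|=O(M^{-1/6})$. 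Since $A$ and all the implied constants are universal, these are precisely the six bounds claimed.

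The only step that genuinely requires care --- and the one I would single out as the main obstacle --- is the a priori bound on $\|h\|_{L^\infty(I_N)}$ needed to verify the length condition \eqref{eq:b-a>|h|/2A}; once that is in hand, the corollary is a purely mechanical combination of the three preceding lemmas.
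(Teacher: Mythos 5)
Your proof is correct and is precisely the argument the paper intends: it explicitly states that the corollary follows by combining Lemmas~\ref{lem:|rNM-rN|L2->0}, \ref{lem:r,rM,rM0 Lip} and \ref{lem:small L2 norm Lipschitz}, and you have mechanically carried that out, including the exponent arithmetic $(M^{-1/2})^{2/3}=M^{-1/3}$ and $(M^{-1/4})^{2/3}=M^{-1/6}$. Your verification of the length hypothesis~\eqref{eq:b-a>|h|/2A} is a detail the paper leaves unstated, but your check (uniform $O(1)$ sup-norm bounds for all six differences against $|I_N|=2\pi m\to\infty$) is the right one.
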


\begin{lemma}
\label{eq:YNM->YN L2} For every $x\in I$, we have
\begin{equation*}
\E\big(Y_{N}^{M}(x)-Y_{N}(x)\big)^2 =
O\bigg(\frac{1}{\sqrt{M}}\bigg)
\end{equation*}
with the constant involved in the $O$-notation universal.
\end{lemma}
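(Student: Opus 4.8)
The plan is to evaluate $\E\bigl(Y_{N}^{M}(x)-Y_{N}(x)\bigr)^2$ exactly from the spectral representations and then estimate the resulting Fourier sum. First I would use that the series \eqref{eq:spect form YN} for $Y_{N}$ and \eqref{eq:spect form YNM} for $Y_{N}^{M}$ are built from the \emph{same} i.i.d. Gaussians $a_{n},b_{n}$ (as recorded in the Remark following \eqref{eq:spect form YNM}). Subtracting the two series coefficient by coefficient, taking the second moment, and using that the cross terms vanish by independence while $\cos^{2}(nx/m)+\sin^{2}(nx/m)=1$, one obtains, with $\hat r_{N}(0)=0$,
\begin{equation*}
\E\bigl(Y_{N}^{M}(x)-Y_{N}(x)\bigr)^{2}=\frac{1}{\sqrt{2\pi m}}\sum_{n\in\Z}\Bigl(\sqrt{\hat r_{N}^{M}(n)}-\sqrt{\hat r_{N}(n)}\Bigr)^{2},
\end{equation*}
which is in fact independent of $x$, so the uniformity asserted in the Lemma is automatic. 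It therefore suffices to show that this sum is $O(1/\sqrt M)$.

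Next, since $\hat r_{N}(n)\ge 0$ and $\hat r_{N}^{M}(n)\ge 0$ by \eqref{eq:rM Fourier trans}, I would apply \eqref{eq:(a-b)^2 <= |a^2-b^2|} with $a=\sqrt{\hat r_{N}^{M}(n)}$, $b=\sqrt{\hat r_{N}(n)}$ to bound each summand by $|\hat r_{N}^{M}(n)-\hat r_{N}(n)|$, reducing the problem to
\begin{equation*}
\sum_{n\in\Z}\bigl|\hat r_{N}^{M}(n)-\hat r_{N}(n)\bigr|=O\bigl(\sqrt{m/M}\bigr),
\end{equation*}
which upon multiplication by $1/\sqrt{2\pi m}$ gives the claim. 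I would split this sum into the range $|n|\le N$ and its complement. On $|n|\le N$, Cauchy--Schwarz over the window of $2N+1$ frequencies together with Parseval's identity and part \eqref{it:rNM-rN L2 norm} of Lemma \ref{lem:|rNM-rN|L2->0} gives
\begin{equation*}
\sum_{|n|\le N}\bigl|\hat r_{N}^{M}(n)-\hat r_{N}(n)\bigr|\le\sqrt{2N+1}\,\|\hat r_{N}^{M}-\hat r_{N}\|_{\ell^{2}(\Z)}=\sqrt{2N+1}\,\|r_{N}^{M}-r_{N}\|_{L^{2}(I_{N})}=O\bigl(\sqrt{m/M}\bigr),
\end{equation*}
since $N\le m$.

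For the tail $|n|>N$, where $\hat r_{N}(n)=0$ and $\hat r_{N}^{M}(n)\ge 0$, I would use conservation of total Fourier mass: evaluating the inversion formula for even functions at $x=0$ and using $S_{M}(0)=1$, $r_{N}(0)=1$ yields $\sum_{n\in\Z}\hat r_{N}^{M}(n)=\sum_{n\in\Z}\hat r_{N}(n)=\sqrt{2\pi m}$, while $\hat r_{N}$ is supported in $|n|\le N$. Hence
\begin{equation*}
\sum_{|n|>N}\hat r_{N}^{M}(n)=\sqrt{2\pi m}-\sum_{|n|\le N}\hat r_{N}^{M}(n)=\sum_{|n|\le N}\bigl(\hat r_{N}(n)-\hat r_{N}^{M}(n)\bigr),
\end{equation*}
which is again $O(\sqrt{m/M})$ by exactly the Cauchy--Schwarz estimate above. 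Combining the two ranges completes the proof. The only mildly delicate point is this tail bound; I would emphasize that handling it through the mass-balance identity, rather than estimating $\hat S_{M}(n)$ directly for $|n|>N$, is what keeps the argument short and avoids the messy bounds on the Fourier transform of $S_{M}$ used in Lemma \ref{lem:r,rM,rM0 Lip}.
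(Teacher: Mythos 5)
Your argument is correct and, up through the low--frequency range, runs the same course as the paper's: both expand $\E(Y_{N}^{M}-Y_{N})^{2}$ via the spectral representations, reduce to the Fourier sum $\sum(\sqrt{\hat r_{N}^{M}(n)}-\sqrt{\hat r_{N}(n)})^{2}$, apply $(a-b)^{2}\le|a^{2}-b^{2}|$, and then handle $|n|\lesssim N$ with Cauchy--Schwarz, Parseval, and part \eqref{it:rNM-rN L2 norm} of Lemma \ref{lem:|rNM-rN|L2->0}. Where you diverge is the tail $|n|>N$: the paper bounds it by direct estimation, using $\hat S_{M}(n)\ll\frac{N^{15/2}}{M^{7}}n^{-8}$ and convolving against $\chi_{1\le|n|\le N}$ to show the tail of $\hat r_{N}^{M}$ contributes only $O(M^{-7})$, which is a somewhat delicate and fussy computation. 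Your replacement is the mass-balance observation: since $r_{N}(0)=r_{N}^{M}(0)=1$ and all Fourier coefficients are nonnegative, $\sum_{n\in\Z}\hat r_{N}^{M}(n)=\sum_{n\in\Z}\hat r_{N}(n)=\sqrt{2\pi m}$, so the escaping high-frequency mass $\sum_{|n|>N}\hat r_{N}^{M}(n)$ equals $\sum_{|n|\le N}(\hat r_{N}(n)-\hat r_{N}^{M}(n))$ and is controlled by the very same Cauchy--Schwarz estimate you already have. This buys a shorter, cleaner proof that sidesteps the quantitative decay of $\hat S_{M}$ entirely (though it yields a weaker $O(\sqrt{m/M})$ tail bound than the paper's $O(M^{-7})$, which is immaterial for the stated conclusion). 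The one ingredient you rely on that should be flagged explicitly is the pointwise inversion $r_{N}^{M}(0)=\frac{1}{\sqrt{2\pi m}}\sum_{n}\hat r_{N}^{M}(n)$, which is justified precisely because $\hat r_{N}^{M}\ge 0$ and $r_{N}^{M}$ is continuous, giving absolute convergence of the series; this is implicit in the paper's construction of $Y_{N}^{M}$.
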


\begin{proof}
By the stationarity we may assume that $x=0$. We have by
\eqref{eq:spect form YN} and \eqref{eq:spect form YNM}
\begin{equation*}
\E\big(Y_{N}^{M}(0)-Y_{N}(0)\big)^2 = \frac{1}{\sqrt{2\pi
m}}\hat{r}_{N}^{M}(0)+ \frac{\sqrt{2}}{\sqrt{\pi
m}}\sum\limits_{n=1}^{\infty} \bigg(\sqrt{\hat{r}_{N}(n)} -
\sqrt{\hat{r}_{N}^{M}(n)} \bigg) ^{2}.
\end{equation*}
Since $\hat{r}_{N}$ is supported in $n\le N$, we have
\begin{equation}
\label{eq:E(YM-Y)^2 expl spl} \E\big(Y_{N}^{M}(x)-Y_{N}(x)\big)^2
\le \frac{\sqrt{2}}{\sqrt{\pi m}}\sum\limits_{n=0}^{2N}
\bigg(\sqrt{\hat{r}_{N}(n)} - \sqrt{\hat{r}_{N}^{M}(n)} \bigg) ^{2}
+ \frac{\sqrt{2}}{\sqrt{\pi m}} \sum\limits_{n>2N}
\hat{r}_{N}^{M}(n).
\end{equation}

We use \eqref{eq:(a-b)^2 <= |a^2-b^2|} again to bound the first
summation of \eqref{eq:E(YM-Y)^2 expl spl}, getting
\begin{equation*}
\begin{split}
\frac{\sqrt{2}}{\sqrt{\pi m}}\sum\limits_{n=0}^{2N} &\le
\frac{\sqrt{2}}{\sqrt{\pi m}} \sum\limits_{n=1}^{2N}
\big|\hat{r}_{N}(n) - \hat{r}_{N}^{M}(n) \big| \\&\ll
\frac{1}{\sqrt{N}} \cdot \sqrt{N} \bigg( \sum\limits_{n=1}^{2N}
\big(\hat{r}_{N}(n)-\hat{r}_{N}^{M}(n)\big)^{2} \bigg)^{1/2}
\\&\le \| \hat{r}_{N}-\hat{r}_{N}^{M} \|_{l^2(\Z)}  \ll \frac{1}{\sqrt{M}},
\end{split}
\end{equation*}
by the Cauchy-Schwartz inequality, Parseval's identity and Lemma
\ref{lem:|rNM-rN|L2->0}, part \eqref{it:rNM-rN L2 norm}.

To bound the second summation in \eqref{eq:E(YM-Y)^2 expl spl}, we
reuse the estimate \eqref{eq:SM(n) est} to obtain
\begin{equation*}
\hat{S}_{M}(n) \ll \frac{N^{15/2}}{M^{7}} \cdot \frac{1}{n^8} ,\:
n\ne 0
\end{equation*}
so that \eqref{eq:rM Fourier trans} implies
\begin{equation*}
\begin{split}
\hat{r}_{N}^{M}(n) &\ll \frac{1}{N} \sum\limits_{k=-N}^{N}
\hat{S}_{M}(n+k) \ll \frac{1}{N} \sum\limits_{k=-N}^{N}
\frac{N^{15/2}}{M^{7}} \cdot \frac{1}{(n+k)^{8}} \\&\ll
\frac{N^{13/2}}{M^{7}} \cdot N\frac{1}{(n/2)^{8}} \ll
\frac{N^{15/2}}{M^{7}}\cdot \frac{1}{n^{8}},
\end{split}
\end{equation*}
and thus the second summation in \eqref{eq:E(YM-Y)^2 expl spl} is
bounded by
\begin{equation*}
\frac{1}{\sqrt{N}}\sum\limits_{n>2N} \frac{N^{15/2}}{M^{7}}\cdot
\frac{1}{n^{8}} \ll \frac{1}{M^7}.
\end{equation*}

This concludes the proof of this lemma.

\end{proof}

\begin{lemma}[Cuzick ~\cite{CZ}, lemma 4]
\label{lem:Cuzick Cor abs} Let $V_{1}$ and $V_{2}$ be a mean zero
Gaussian pair of random variables and let
\begin{equation*}
\rho = Cor(V_{1},V_{2}) := \frac{\E V_{1}V_{2}-\E V_{1}\E
V_{2}}{\sqrt{\Var V_{1} \cdot \Var V_{2}}}.
\end{equation*}
Then
\begin{equation*}
0 \le Cor(|V_{1}|,|V_{2}|) \le \rho^2.
\end{equation*}
\end{lemma}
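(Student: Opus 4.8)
The plan is to normalise the variances and then expand $|x|$ in Hermite polynomials, which makes both inequalities transparent. Since $Cor$ is unchanged when $V_1,V_2$ are multiplied by positive constants, I would first reduce to the case $\Var V_1=\Var V_2=1$ (the degenerate cases being trivial or vacuous), so that $(V_1,V_2)$ is a standard Gaussian pair with correlation $\rho$; and if $|\rho|=1$ then $V_1=\pm V_2$, whence $|V_1|=|V_2|$ and $0\le Cor(|V_1|,|V_2|)=1=\rho^2$, so assume $|\rho|<1$. Let $\{H_n\}_{n\ge0}$ denote the Hermite polynomials normalised to form an orthonormal basis of $L^2(\R,\gamma)$, with $\gamma$ the standard Gaussian measure; then $H_0\equiv1$, $H_n$ has the parity of $n$, and Mehler's formula gives $\E[f(V_1)g(V_2)]=\sum_{n\ge0}\hat f_n\,\hat g_n\,\rho^n$ for all $f,g\in L^2(\gamma)$, where $\hat f_n=\langle f,H_n\rangle_{L^2(\gamma)}$. (Equivalently, $\E[g(V_2)\mid V_1]=(T_\rho g)(V_1)$ for the Mehler operator $T_\rho$, an $L^2(\gamma)$-contraction diagonalised by the $H_n$ with eigenvalues $\rho^n$.)

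Next I would apply this with $f=g=|x|\in L^2(\gamma)$. Since $|x|$ is even, $\hat f_n=0$ for every odd $n$, so, writing $c_j:=\hat f_{2j}$ and noting $c_0=\langle |x|,1\rangle_{L^2(\gamma)}=\E|V_1|=\E|V_2|$,
\[
Cov(|V_1|,|V_2|)=\E[|V_1||V_2|]-\E|V_1|\,\E|V_2|=\sum_{j\ge1}c_j^2\,\rho^{2j},\qquad \Var|V_1|=\Var|V_2|=\sum_{j\ge1}c_j^2 .
\]
The lower bound is then immediate: each term $c_j^2\rho^{2j}$ is nonnegative, so $Cov(|V_1|,|V_2|)\ge0$ and hence $Cor(|V_1|,|V_2|)\ge0$. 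For the upper bound, $j\ge1$ and $|\rho|\le1$ force $\rho^{2j}\le\rho^2$, whence $Cov(|V_1|,|V_2|)\le\rho^2\sum_{j\ge1}c_j^2=\rho^2\Var|V_1|$; dividing by $\sqrt{\Var|V_1|\Var|V_2|}=\Var|V_1|$ gives $Cor(|V_1|,|V_2|)\le\rho^2$.

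The one step requiring genuine care — and the only place beyond routine bookkeeping — is the Mehler identity $\E[f(V_1)g(V_2)]=\sum_n\hat f_n\hat g_n\rho^n$ together with the termwise evaluation $\E[|V_1||V_2|]=\sum_n c_j^2\rho^{2j}$; this is classical, and since $T_\rho$ is an $L^2(\gamma)$-contraction acting diagonally on the Hermite basis no convergence subtlety arises for $f,g\in L^2(\gamma)$ and $|\rho|\le1$. Should one prefer to avoid Hermite expansions entirely, the Gaussian integral computed in \eqref{eq:int |z1||z2| Gauss} yields, for the standardised pair, $\E|V_1||V_2|=\tfrac2\pi(\sqrt{1-\rho^2}+\rho\arcsin\rho)$, hence $Cor(|V_1|,|V_2|)=\tfrac{2}{\pi-2}(\sqrt{1-\rho^2}+\rho\arcsin\rho-1)$; the lower bound follows because the bracket vanishes at $\rho=0$ and has $\rho$-derivative $\arcsin\rho$, and the upper bound is the elementary one-variable inequality $2(\sqrt{1-\rho^2}+\rho\arcsin\rho-1)\le(\pi-2)\rho^2$ on $[-1,1]$, which the expansion above identifies with the manifestly nonnegative quantity $\pi\sum_{j\ge1}c_j^2(\rho^2-\rho^{2j})$.
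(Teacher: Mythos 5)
Your Hermite--Mehler proof is correct, and it is a complete proof rather than a citation. The paper itself does not reprove the lemma: it cites Cuzick~\cite{CZ} and, in the remark immediately following, observes that the bound can also be verified by explicitly computing both sides with the Bleher--Di integral \eqref{eq:int |z1||z2| Gauss}. Your primary route (expand $|x|$ in the Hermite basis, invoke Mehler's bilinear formula $\E[f(V_1)g(V_2)]=\sum_n \hat f_n\hat g_n\rho^n$, use parity to kill odd coefficients, then compare $\rho^{2j}\le\rho^2$ termwise) is the conceptual one and is almost certainly close to Cuzick's original argument, since Hermite/chaos expansions are the standard tool in that literature; it makes both inequalities appear as sums of manifestly nonnegative quantities, handles the lower bound for free, and generalizes immediately to any even $L^2(\gamma)$ function in place of $|x|$. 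Your secondary sketch --- computing $\E|V_1V_2|=\tfrac2\pi(\sqrt{1-\rho^2}+\rho\arcsin\rho)$ and reducing to the one-variable calculus inequality $2(\sqrt{1-\rho^2}+\rho\arcsin\rho-1)\le(\pi-2)\rho^2$ --- is precisely the alternative the paper's remark has in mind; it is more elementary in prerequisites but special to $f=g=|x|$ and requires one to actually verify the calculus inequality on $[-1,1]$ (your observation that $\tfrac{d}{d\rho}(\sqrt{1-\rho^2}+\rho\arcsin\rho)=\arcsin\rho$ handles the lower bound cleanly, and the difference of the two sides is, up to a factor $\pi$, the same nonnegative Hermite tail $\sum_{j\ge1}c_j^2(\rho^2-\rho^{2j})$). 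Minor bookkeeping points you handled correctly: scale-invariance of $Cor$, the $|\rho|=1$ endpoint, and the fact that $\Var|V_1|=\sum_{j\ge1}c_j^2>0$ so the division is legitimate.
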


\begin{remark}
Lemma \ref{lem:Cuzick Cor abs} may be also obtained computing explicitly
both sides of the inequality using the integral \eqref{eq:int |z1||z2| Gauss} due to Bleher and Di
~\cite{BD}.

\end{remark}

\subsection{Proof of Proposition \ref{prop:E(Z-Zmol)^2=o(N)}}

\begin{proof}[Proof of Proposition \ref{prop:E(Z-Zmol)^2=o(N)}]

Recall that $Z_{N}$ (resp. $Z_{N}^{M}$) is the number of the zeros
of $Y_{N}$ (resp. $Y_{N}^{M}$) on $I_{N}=[-\pi m, \pi m]$. The
process $Y_{N}^{M}$ is given in its spectral form \eqref{eq:spect
form YNM} with the RHS absolutely convergent, uniformly w.r.t. $x\in
I_{N}$. The rapid decay of $\hat{r}_{N}^{M}$ implies that
$Y_{N}^{M}$ is almost surely continuously differentiable, and we may
differentiate \eqref{eq:spect form YNM} term by term.

As stated before, for notational convenience, rather than showing
the original statement of the Proposition, we are going to prove
\eqref{eq:E(Z+-ZM+)^2}. We want to bound $E_{i}$ defined by
\eqref{eq:E1 def}-\eqref{eq:E3 def} given the large integral
parameters $S$ and $R$.

\subsubsection{Bounding $E_{1}$}
For every $x\in I_{N}$, let $\chi_{N}^{x}$ (resp. $\chi_{N,M}^{x}$)
be the indicator of the event $\{ Y_{N}(0)Y_{N}(x) < 0 \}$ (resp.
$\{ Y_{N}^{M}(0)Y_{N}^{M}(x) < 0 \}$). Intuitively, for $S$ large
(i.e. $I_{N,1}$ is short) one expects at most one zero of either
$Y_{N}$ or $Y_{N}^{M}$ on $I_{N,1}$. Thus the number of zeros of
$Y_{N}$ (resp. $Y_{N}^{M}$) on $I_{N,1}= [0,\tau]$ with
$\tau:=\frac{\pi}{2S} $ should, with high probability, equal
$\chi=\chi_{N}^{\tau}$ (resp. $\chi^{M}=\chi_{N,M}^{\tau}$).

Using the Cauchy-Schwartz inequality, we have for every
$k_{1},k_{2}$
\begin{equation*}
\begin{split}
&Cov(Z_{N,k_1}-Z_{N,k_1}^{M}, Z_{N,k_2}-Z_{N,k_2}^{M}) \\&\le
\sqrt{\Var(Z_{N,k_1}-Z_{N,k_1}^{M})} \cdot
\sqrt{\Var(Z_{N,k_2}-Z_{N,k_2}^{M})} = \Var(Z_{N,1}-Z_{N,1}^{M})
\end{split}
\end{equation*}
by the stationarity. Therefore,
\begin{equation}
\label{eq:E1 << trngl}
\begin{split}
E_{1} &\ll SN \Var(Z_{N,1}-Z_{N,1}^{M}) \le
SN\E(Z_{N,1}-Z_{N,1}^{M})^2
\\&\ll SN\bigg(\E (Z_{N,1} - \chi)^2+\E(\chi - \chi^{M})^2+\E(\chi^{M}-Z_{N,1}^{M})^2\bigg).
\end{split}
\end{equation}

We recognize the second summand of \eqref{eq:E1 << trngl} as the
probability $Pr(\chi\ne \chi^{M})$. We may bound it as
\begin{equation}
\label{eq:chi!=chiM<=sgn0+sgnx0}
\begin{split}
\E(\chi - \chi^{M})^2 &=Pr(\chi\ne \chi^{M}) \le
Pr\big(\mbox{sgn}(Y_{N}(0)) \ne
\mbox{sgn}(Y_{N}^{M}(0))\big)\\&+Pr\big(\mbox{sgn}(Y_{N}(\tau)) \ne
\mbox{sgn}(Y_{N}^{M}(\tau))\big).
\end{split}
\end{equation}
We bound the first summand of the RHS of
\eqref{eq:chi!=chiM<=sgn0+sgnx0}, and similarly the second one. For
every $\epsilon>0$, we have
\begin{equation}
\label{eq:sgn0<=eps} Pr\big(\mbox{sgn}(Y_{N}(0)) \ne \mbox{sgn}(Y_{N}^{M}(0))\big)
\le Pr(|Y_{N}(0)| < \epsilon)+Pr(|Y_{N}(0)-Y_{N}^{M}(0)| >
\epsilon).
\end{equation}
The first summand of \eqref{eq:sgn0<=eps} is bounded by
\begin{equation*}
Pr(|Y_{N}(0)| < \epsilon) =O(\epsilon),
\end{equation*}
since $Y_{N}(0)$ is $(0,1)$-Gaussian, and the second one is
\begin{equation*}
Pr(|Y_{N}(0)-Y_{N}^{M}(0)| > \epsilon) \ll
\frac{1}{\sqrt{M}\epsilon^2},
\end{equation*}
by Lemma \ref{eq:YNM->YN L2} and Chebyshev's inequality.

Hence, we obtain the bound
\begin{equation*}
Pr\big(\mbox{sgn}(Y_{N}(0)) \ne \mbox{sgn}(Y_{N}^{M}(0))\big) = O\bigg(\epsilon +
\frac{1}{\sqrt{M}\epsilon^2}\bigg),
\end{equation*}
and, similarly,
\begin{equation*}
Pr\big(mbox{sgn}(Y_{N}(\tau)) \ne
\mbox{sgn}(Y_{N}^{M}(\tau))\big)=O\bigg(\epsilon +
\frac{1}{\sqrt{M}\epsilon^2}\bigg).
\end{equation*}
Plugging the last couple of estimates into
\eqref{eq:chi!=chiM<=sgn0+sgnx0} yields that for every $\epsilon >
0$
\begin{equation}
\label{eq:E(chi-chiM)2 est} \E(\chi - \chi^{M})^2 = O\bigg(\epsilon
+ \frac{1}{\sqrt{M}\epsilon^2}\bigg).
\end{equation}
The RHS of \eqref{eq:E(chi-chiM)2 est} can be made arbitrarily
small.

Now we treat the third summand of \eqref{eq:E1 << trngl}, and
similarly, but easier, the first one. We have
\begin{equation}
\begin{split} \label{eq:E(Z-chi)^2 rel Z} \E(Z_{N,1}^{M}-\chi^{M})^2 &=
\sum\limits_{k=1}^{\infty}k^2 Pr(Z_{N,1}^{M}-\chi^{M} = k) \\&\le
\sum\limits_{k=2}^{\infty}2(k^2-k) Pr(Z_{N,1}^{M} = k+\chi^{M}) \le
2\E \big[(Z_{N,1}^{M})^2-Z_{N,1}^{M} \big],
\end{split}
\end{equation}
and and using the same approach as in \eqref{eq:bnd par sum var} in
addition to some easy manipulations yields
\begin{equation*}
\E \big[(Z_{N,1}^{M})^2-Z_{N,1}^{M} \big] \ll \int\limits_{0}^{\tau}
(\tau-x) \cdot \tilde{K}(x) dx ,
\end{equation*}
recalling the notation $\tau:=\frac{\pi}{2S}$, where
$$\tilde{K}(x) = \frac{{\lambda_{2,N}^{M}}'(1-{r}^2)-{r'}^2}{{(1-{r}^2)}^{3/2}}\big(\sqrt{1-\rho^2}+\rho\arcsin{\rho}\big) $$
with notations as in \eqref{eq:bnd par sum var int man}. We saw
already that $\tilde{K}(x)$ is bounded, uniformly w.r.t. $N$, so
that
\begin{equation*}
\E \big[(Z_{N,1}^{M})^2-Z_{N,1}^{M} \big] =O(\tau^2) =
O\bigg(\frac{1}{S^2} \bigg).
\end{equation*}
Plugging the last estimate into \eqref{eq:E(Z-chi)^2 rel Z}, we
obtain the bound
\begin{equation}
\label{eq:E(ZN1M-chiM)^2 bnd} \E(Z_{N,1}^{M}-\chi^{M})^2 =
O\bigg(\frac{1}{S^2}\bigg)
\end{equation}
and similarly,
\begin{equation}
\label{eq:E(ZN1-chi)^2 bnd} \E(Z_{N,1}-\chi)^2 =
O\bigg(\frac{1}{S^2}\bigg)
\end{equation}
as well.

Collecting the bounds for various summands of \eqref{eq:E1 << trngl}
we encountered i.e. \eqref{eq:E(chi-chiM)2 est},
\eqref{eq:E(ZN1M-chiM)^2 bnd} and \eqref{eq:E(ZN1-chi)^2 bnd}, we
obtain the bound
\begin{equation*}
|E_{1}| \ll
NS\bigg(\epsilon+\frac{1}{\sqrt{M}\epsilon^2}+\frac{1}{S^2}\bigg),
\end{equation*}
or, equivalently,
\begin{equation}
\label{eq:E1/N bnd} \frac{|E_{1}|}{N} \ll \epsilon
S+\frac{S}{\sqrt{M}\epsilon^2}+\frac{1}{S},
\end{equation}
which could be made arbitrarily small.

\subsubsection{Bounding $E_{2}$}
We write $E_{2}$ as
\begin{equation}
\label{eq:Eprodsum=sumEprod}
\begin{split}
E_{2}&=\sum Cov\big(Z_{N,k_{1}}-Z_{N,k_{1}}^{M},
Z_{N,k_{2}}-Z_{N,k_{2}}^{M}\big) = \sum \E Z_{N,k_{1}}\cdot
\left(Z_{N,k_{2}}-Z_{N,k_{2}}^{M}\right) \\&- \sum\E Z_{N,k_{1}}^{M}\cdot
\left(Z_{N,k_{2}}-Z_{N,k_{2}}^{M}\right)-\sum \E \left[Z_{N,k_{1}}-Z_{N,k_{1}}^{M}\right]
\E\left[Z_{N,k_{2}}-Z_{N,k_{2}}^{M}\right] \\&=: E_{2,1}-E_{2,2}-E_{2,t},
\end{split}
\end{equation}
and bound each of the summands of \eqref{eq:Eprodsum=sumEprod}
separately. In fact, we will only bound the contribution of the
summands $E_{2,t}$ and of the (slightly more difficult of the
remaining two) $E_{2,2}$, bounding $E_{2,1}$ in a similar manner.
Note that the number of summands in each of the summations
in \eqref{eq:Eprodsum=sumEprod}, which equals the number of pairs
$(k_{1},k_{2})$ with $2 \le |k_{1}-k_{2}| \le RS$, is
of order $K\cdot SR = NS\cdot SR = N S^2 R$.

We reuse the notation $\tau:= \frac{\pi}{2 S}$.
Note that differentiating \eqref{eq:rNM def} yields
\begin{equation*}
{r_{N}^{M}}''(0)=r_{N}''(0)+O\left(\frac{1}{M^2}\right).
\end{equation*}
using \eqref{eq:SM=1+O(x/M)^2 org} near the origin.
One then has
\begin{equation*}
\begin{split}
\E [Z_{N,k_{1}}-Z_{N,k_{1}}^{M}] \E[Z_{N,k_{2}}-Z_{N,k_{2}}^{M}] &=
\big(\E [Z_{N,1}-Z_{N,1}^{M}]\big)^2 \\&\ll \big(\tau
(r_{N}''(0)-{r_{N}^{M}}''(0))\big)^2 \ll \frac{1}{S^2 M^4},
\end{split}
\end{equation*}
by the stationarity, formula \eqref{eq:exp num zer} and
its analogue for $Y_{N}^{M}$. Therefore
\begin{equation}
\label{eq:E2t bnd} E_{2,t} \ll N\frac{R}{M^4}.
\end{equation}

Note that for $|k_{1}-k_{2}|\ge 2$, the intervals $I_{N,k_{1}}$ and
$I_{N,k_{2}}$ are disjoint. Using a similar approach to ~\cite{CL},
we find that
\begin{equation*}
\begin{split}
&\E Z_{N,k_{1}}^{M}\cdot (Z_{N,k_{2}}-Z_{N,k_{2}}^{M}) =
\int\limits_{(k_{2}-k_{1}-1)\tau}^{(k_{2}-k_{1}+1)\tau}
(\tau-|x-(k_{2}-k_{1})\tau |) \times \\&\times
\bigg[\iint\limits_{\R^{2}} |v_{1}|\cdot |v_{2}|
\big(\tilde{\phi}_{N,M,0}^{x}(v_{1},v_{2})-\tilde{\phi}_{N,M}^{x}(v_{1},v_{2})\big)
dv_{1}dv_{2}\bigg] dx,
\end{split}
\end{equation*}
using the notations \eqref{eq:tildphiNM,NM0 def}.
We then bound $E_{2,2}$ as
\begin{equation} \label{eq:E22 bnd int}
\begin{split}  E_{2,2} &\le NS^2 R \cdot \max\limits_{|k_{1}-k_{2}|\ge 2} \big\{ \E
Z_{N,k_{1}}^{M}\cdot (Z_{N,k_{2}}-Z_{N,k_{2}}^{M}) \big\}
\\& \ll N R \max\limits_{\tau \le x \le \pi m}
\bigg\{\iint\limits_{\R^2} |v_{1}| |v_{2}| \cdot \big|
\tilde{\phi}_{N,M,0}^{x}(v_{1},v_{2})-\tilde{\phi}_{N,M}^{x}(v_{1},v_{2})
\big| dv_{1}dv_{2} \bigg\},
\end{split}
\end{equation}
where we used the obvious inequality
$$\max\limits_{2\le |k_{1}-k_{2}| \le RS} \le  \max\limits_{ |k_{1}-k_{2}| \ge 2}  .$$

To bound the last integral, we exploit the fact that on any compact
subset of $\R^{2}$ we have
$$|\tilde{\phi}_{N,M,0}^{x}(v_{1},v_{2})-\tilde{\phi}_{N,M}^{x}(v_{1},v_{2})|\rightarrow
0$$ as $N\rightarrow\infty$, uniformly w.r.t. $x>\tau$, whereas
outside both $\tilde{\phi}_{N,M,0}^{x}$ and $\tilde{\phi}_{N,M}^{x}$
are rapidly decaying. More precisely, let $T>0$ be a large
parameter. We write
\begin{equation}
\label{eq:intR2 J1+J2} \iint\limits_{\R^2} = \iint\limits_{[-T,T]^2}
+ \iint\limits_{\max\{|v_{i}| \} \ge T} =: J_{1}+J_{2}.
\end{equation}

While bounding $J_{2}$, we may assume with no loss of generality,
that $$|v_{1}| \ge T$$ on the domain of the integration. Let
\begin{equation*}
J_{2,1} := \iint\limits_{|v_{1}|\ge T} |v_{1}| |v_{2}| \cdot \big|
\tilde{\phi}_{N,M,0}^{x}(v_{1},v_{2}) \big| dv_{1}dv_{2}
\end{equation*}
and
\begin{equation*}
J_{2,2} := \iint\limits_{|v_{1}|\ge T} |v_{1}| |v_{2}| \cdot \big|
\tilde{\phi}_{N,M}^{x}(v_{1},v_{2}) \big| dv_{1}dv_{2},
\end{equation*}
so that
\begin{equation}
\label{eq:J2<=J21+J22} J_{2} \le J_{2,1}+J_{2,2}.
\end{equation}

Upon using the Cauchy-Schwartz inequality, we obtain
\begin{equation}
\label{eq:J22 bnd CS}
\begin{split}
J_{2,2} &\le  \int\limits_{-\infty}^{\infty} dv_{2}
\int\limits_{|v_{1}|\ge T} |v_{1}| |v_{2}| \cdot
\tilde{\phi}_{N,M}^{x}(v_{1},v_{2}) dv_{1}  \\&\ll
\bigg(\int\limits_{-\infty}^{\infty} v_{2}^2 dv_{2}
\int\limits_{T}^{\infty}\tilde{\phi}_{N,M}^{x}(v_{1},v_{2}) dv_{1}
\bigg)^{1/2} \cdot \bigg( \int\limits_{-\infty}^{\infty} dv_{2}
\int\limits_{T}^{\infty} v_{1}^2\tilde{\phi}_{N,M}^{x}(v_{1},v_{2})
dv_{1} \bigg)^{1/2}
\\&\le \bigg(\frac{\E\big[{Y_{N}^{M}}'(x)^2\big| Y_{N}^{M}(0)=Y_{N}^{M}(x)=0 \big]}{2\pi\sqrt{1-r_{N}^{M}(x)^2}} \bigg)^{1/2} \cdot
\bigg( \int\limits_{-\infty}^{\infty} dv_{2}
\int\limits_{T}^{\infty} v_{1}^2\tilde{\phi}_{N,M}^{x}(v_{1},v_{2})
dv_{1} \bigg)^{1/2},
\end{split}
\end{equation}
by \eqref{eq:phiNM=psiNM/det}.

Computing explicitly, we have
\begin{equation}
\label{eq:cond E bnd J22} \E\big[{Y_{N}^{M}}'(x)^2\big|
Y_{N}^{M}(0)=Y_{N}^{M}(x)=0 \big] = {\lambda_{2,N}^{M}}' -
\frac{{r_{N}^{M}}'(x)^2}{1-r_{N}^{M}(x)^2} = O(1),
\end{equation}
where ${\lambda_{2,N}^{M}}' := -{r_{N}^{M}}''(0)$, and, changing the
order of integration,
\begin{equation*}
\int\limits_{-\infty}^{\infty} dv_{2} \int\limits_{T}^{\infty}
v_{1}^2\tilde{\phi}_{N,M}^{x}(v_{1},v_{2}) dv_{1} =
\int\limits_{T}^{\infty} v^2 \exp\bigg(-\frac{1}{2}
\frac{v^2}{\sigma^2}\bigg)\frac{dv}{2\pi
\sigma\sqrt{1-r_{N}^{M}(x)^2}},
\end{equation*}
where
\begin{equation*}
\sigma^2 := \E\big[{Y_{N}^{M}}'(0)^2 \big|
Y_{N}^{M}(0)=Y_{N}^{M}(x)=0\big]={\lambda_{2,N}^{M}}' -
\frac{{r_{N}^{M}}'(x)^2}{1-r_{N}^{M}(x)^2}
\end{equation*}
as well. Continuing, we bound the integral by
\begin{equation}
\begin{split}
\label{eq:int >=T bnd J22} \int\limits_{-\infty}^{\infty} dv_{2}
\int\limits_{T}^{\infty} &\ll
\frac{\sigma^2}{\sqrt{1-r_{N}^{M}(x)^2}}\int\limits_{\frac{T}{\sigma}}^{\infty}
v'^2 \exp\bigg(-\frac{1}{2}v'^2\bigg) dv' \\&\ll
\frac{\sigma^2}{T^2} \cdot
\frac{\sigma^{2}}{\sqrt{1-r_{N}^{M}(x)^2}} \ll \frac{\sigma^{4}}{T^2
\sqrt{1-r_{N}^{M}(x)^2}}
\end{split}
\end{equation}
(say), by the rapid decay of the exponential.

Plugging \eqref{eq:cond E bnd J22} and \eqref{eq:int >=T bnd J22}
into \eqref{eq:J22 bnd CS}, and using the crude estimate
$$1-r_{N}^{M}(x) \gg \tau^2$$ for $\tau \le x \le \pi m$, we obtain
the estimate
\begin{equation}
\label{eq:J22 bnd} J_{2,2} \ll \frac{\bigg({\lambda_{2,N}^{M}}' -
\frac{{r_{N}^{M}}'(x)^2}{1-r_{N}^{M}(x)^2}\bigg)^{3/2}}{\sqrt{1-{r_{N}^{M}}(x)^2}}
\cdot \frac{1}{T} \ll \frac{S}{T}.
\end{equation}

Repeating all of the above for $J_{2,1}$, we obtain
\begin{equation}
\label{eq:J21 bnd} J_{2,1} \ll \frac{\bigg({\lambda_{2,N}}' -
\frac{{r_{N}^{M}}'(x)^2}{1-r_{N}^{M,0}(x)^2}\bigg)\cdot
\bigg({\lambda_{2,N}^{M}}' -
\frac{{r_{N}^{M,0}}'(x)^2}{1-r_{N}^{M,0}(x)^2}\bigg)
^{1/2}}{\sqrt{1-{r_{N}^{M,0}}(x)^2}} \cdot \frac{1}{T} \ll
\frac{S}{T},
\end{equation}
using the same estimate $$1-{r_{N}^{M,0}}(x) \gg \tau^2,$$ which is
easy to obtain using \eqref{eq:rNM0 Fourier repr} and Lemmas
\ref{lem:|rNM-rN|L2->0} and \ref{lem:r,rM,rM0 Lip}.

Plugging the inequality \eqref{eq:J21 bnd} together with
\eqref{eq:J22 bnd} into \eqref{eq:J2<=J21+J22}, we obtain
\begin{equation}
\label{eq:J2 bnd} J_{2} = O\bigg(\frac{S}{T}\bigg).
\end{equation}

Now we are going to bound $J_{1}$. Recall the definition
\eqref{eq:tildphiNM,NM0 def} of $\tilde{\phi}^{x}_{N,M}$ and
$\tilde{\phi}^{x}_{N,M,0}$ with \eqref{eq:phi NM def} and
\eqref{eq:phi NM0 def}, and the covariance matrices \eqref{eq:Sigma
NM def}, \eqref{eq:Sigma NM0 def}.

Corollary \ref{cor:rM->r unif} implies
$$|\Sigma_{N}^{M} - \Sigma_{N}^{M,0}| = O\bigg(\frac{1}{M^{1/6}}\bigg) $$
(here and anywhere else the inequality $M \le y$ where $M$ is a
matrix and $y$ is a number means that all the entries of $M$ are
$\le$ than $y$). Expanding the determinants $\det{\Sigma_{N}^{M}}$
and $\det{\Sigma_{N}^{M,0}}$ into Taylor polynomial around the
origin shows that they are bounded away from zero in the sense that
for $\tau < x < \pi m$,
\begin{equation*}
\det{\Sigma_{N}^{M}},\, \det{\Sigma_{N}^{M,0}} \gg \tau^A \gg
\frac{1}{S^A}
\end{equation*}
for some constant $A>0$. 

\begin{remark}
An explicit
computation shows that $\det\Sigma_{N}^{M}(x) \gg x^8$ and also
$\det\Sigma_{N}^{M,0}(x) \gg x^8$, with universal constants.
\end{remark}

Thus also $$|(\Sigma_{N}^{M})^{-1} - (\Sigma_{N}^{M,0})^{-1}| =
O\bigg(\frac{S^A}{M^{1/6}}\bigg)$$ and
\begin{equation*}
\bigg|\frac{1}{\sqrt{\det{\Sigma_{N}^{M}}}} -
\frac{1}{\sqrt{\det{\Sigma_{N}^{M,0}}}} \bigg| \ll
O\bigg(\frac{S^{2A}}{M^{1/6}}\bigg)
\end{equation*}

Substituting the estimates above into \eqref{eq:phi NM def} and
\eqref{eq:phi NM0 def}, and using \eqref{eq:tildphiNM,NM0 def}, we
obtain
\begin{equation*}
\big|\tilde{\phi}_{N,M,0}^{x}(v_{1},v_{2})-\phi_{N,M}^{x}(v_{1},v_{2})\big|
\ll \frac{S^{2A}}{M^{1/6}}+S^{A/2}\cdot \frac{T^2 S^{A}}{M^{1/6}}
\ll \frac{S^{2A} T^2}{M^{1/6}},
\end{equation*}
uniformly for $\tau \le x \le \pi m$ and $|v_{i}| \le T$, where we
used the trivial estimate $|e^{x}-e^{y}|\le |x-y|$ for $x,y < 0$.

Integrating the last estimate for $|v_{i}| \le T$ and substituting
into the definition of $J_{1}$, we finally obtain
\begin{equation}
\label{eq:J1 bnd} J_{1} = O\bigg( \frac{S^{2A} T^6}{M^{1/6}} \bigg).
\end{equation}

Upon combining \eqref{eq:J1 bnd} and \eqref{eq:J2 bnd}, and
recalling \eqref{eq:E22 bnd int} with \eqref{eq:intR2 J1+J2}, we
finally obtain a bound for $E_{2,2}$
\begin{equation}
\label{eq:E22 bnd} E_{2,2} = NR \cdot \bigg( O\bigg( \frac{S^{2A}
T^6}{M^{1/6}} \bigg) + O\bigg(\frac{S}{T}\bigg) \bigg),
\end{equation}
and repeating the same computation for $E_{2,1}$, we may find that
the same bound is applicable for $E_{2,1}$
\begin{equation}
\label{eq:E21 bnd} E_{2,1} = NR \cdot \bigg( O\bigg( \frac{S^{2A}
T^6}{M^{1/6}} \bigg) + O\bigg(\frac{S}{T}\bigg) \bigg).
\end{equation}

Using \eqref{eq:E21 bnd} together with \eqref{eq:E22 bnd} and
\eqref{eq:E2t bnd}, and noting \eqref{eq:Eprodsum=sumEprod}, we
finally obtain a bound for $E_{2}$
\begin{equation*}
E_{2} = NR \cdot \bigg( O\bigg( \frac{S^{2A} T^6}{M^{1/6}} \bigg) +
O\bigg(\frac{S}{T}\bigg) \bigg),
\end{equation*}
so that
\begin{equation}
\label{eq:E2/N bnd} \frac{E_{2}}{N} = O\bigg( \frac{R S^{2A}
T^6}{M^{1/6}} \bigg) + O\bigg(\frac{RS}{T}\bigg),
\end{equation}
which could be made arbitrarily small.

\subsubsection{Bounding $E_{3}$}

By the symmetry $$Cov (Z_{N,k_{1}}, Z_{N,k_{2}}^{M}) = Cov(
Z_{N,k_{2}}, Z_{N,k_{1}}^{M})$$ we may rewrite $E_{3}$ as
\begin{equation}
\label{eq:E3=E31-2E32+E33}
\begin{split}
E_{3} &= \sum Cov( Z_{N,k_{1}}, Z_{N,k_{2}}) - 2 Cov( Z_{N,k_{1}},
Z_{N,k_{2}}^{M}) + Cov( Z_{N,k_{1}}^{M}, Z_{N,k_{2}}^{M}) \\&=:
E_{3,1}-2E_{3,2}+E_{3,3}.
\end{split}
\end{equation}

First we treat the ``mixed" term $E_{3,2}$, providing a similar
treatment for the other terms. Assume with no loss of generality,
that $k_{2} > k_{1}$. Here we employ the random vector
$(V_{1},V_{2})$ defined in section \ref{sec:prob prelim cov}. Using
the theory developed in ~\cite{CL}, modified to treat the covariance
(see also remark \ref{rem:VarZ=intE|X'(0)X'(t)||0}), we may write
\begin{equation*}
\begin{split}
Cov( Z_{N,k_{1}}, Z_{N,k_{2}}^{M}) &=
\frac{1}{2\pi}\int\limits_{(k_{2}-k_{1}-1)\tau}^{(k_{2}-k_{1}+1)\tau}
\bigg[ \big(\tau-|x-(k_{2}-k_{1})\tau |\big) \times \\&
\bigg(\frac{\E\big[|V_{1}(x)V_{2}(x)|\big]}{\sqrt{1-r_{N}^{M,0}(x)^2}}-
\E |Y_{N}'(0)| \E|{Y_{N}^{M}}'(x)| \bigg) \bigg] dx.
\end{split}
\end{equation*}
where, as usual, we denote $\tau := \frac{\pi}{2S}$. Summing that up
for $|k_{2}-k_{1}| \ge SR$, and using the stationarity, we obtain
the bound
\begin{equation*}
E_{3,2} \ll N \int\limits_{\frac{\pi R}{2}}^{\pi m }
\bigg[\frac{\E\big[|V_{1}(x)V_{2}(x)|\big]}{\sqrt{1-r_{N}^{M,0}(x)^2}}-
\E |Y_{N}'(0)| \E|{Y_{N}^{M}}'(x)| \bigg] dx,
\end{equation*}
so that
\begin{equation}
\label{eq:E32 bnd int} \frac{E_{3,2}}{N} \ll \int\limits_{\frac{\pi
R}{2}}^{\pi m }
\bigg[\frac{\E\big[|V_{1}(x)V_{2}(x)|\big]}{\sqrt{1-r_{N}^{M,0}(x)^2}}-
\E |Y_{N}'(0)| \E|{Y_{N}^{M}}'(x)| \bigg] dx.
\end{equation}

To bound the integral on the RHS of \eqref{eq:E32 bnd int}, we use
the triangle inequality to write
\begin{equation}
\begin{split}
\label{eq:E32 bnd sumint} \frac{E_{3,2}}{N} &\ll
\int\limits_{\frac{\pi R}{2}}^{\pi m } \frac{\big|
Cov(|V_{1}|,|V_{2}|)\big| }{\sqrt{1-r_{N}^{M,0}(x)^2}}dx \\&+
\int\limits_{\frac{\pi R}{2}}^{\pi m } \bigg|\frac{\E |V_{1}| \cdot
\E |V_{2}|}{\sqrt{1-r_{N}^{M,0}(x)^2}}- \E |Y_{N}'(0)|
\E|{Y_{N}^{M}}'(x)| \bigg| dx =: J_{3,1}+J_{3,2}.
\end{split}
\end{equation}
For $\frac{\pi R}{2} < x< \pi m$, $R$ sufficiently large,
$r_{N}^{M,0}$ is bounded away from $1$ (see Corollary \ref{cor:rM->r
unif}), and therefore, while bounding $J_{3,1}$, we may disregard
the denominator of the first integrand in \eqref{eq:E32 bnd sumint}.
Note that if $V$ is a mean zero Gaussian random variable, then
\begin{equation}
\label{eq:E|V| V mean 0 Gaus} \E(|V|) = \sqrt{\frac{2}{\pi}} \cdot
\sqrt{\Var(V)}.
\end{equation}
and
\begin{equation}
\label{eq:Var|V| R mean 0 Gaus} \Var(|V|) =
\bigg(1-\frac{2}{\pi}\bigg) \Var(V).
\end{equation}
Note also that for $\frac{\pi R}{2} < x< \pi m$ the variances
$\Var(V_{1}(x))$ and $\Var(V_{2}(x))$, given by the diagonal entries
of \eqref{eq:OmegaNM0 cond def}, are bounded away from $0$. This
follows from the decay of $r_{N}^{M,0}(x)$ and ${r_{N}^{M,0}}'(x)$
for large values of $x$, due to Corollary \ref{cor:rM->r unif}.
Thus, an application of Lemma \ref{lem:Cuzick Cor abs} yields
\begin{equation*}
0 \le Cov(|V_{1}|,|V_{2}|) \le
\frac{(1-\frac{2}{\pi})Cov(V_{1},V_{2})^2}{\sqrt{\Var(V_{1})\cdot
\Var(V_{2})}} \ll Cov(V_{1},V_{2})^2.
\end{equation*}

All in all, we obtain the estimate
\begin{equation*}
J_{3,1} \ll \int\limits_{\frac{\pi V}{2}}^{\pi m }
Cov(V_{1},V_{2})^2 dx,
\end{equation*}
which (this time, using the off-diagonal elements of
\eqref{eq:OmegaNM0 cond def}) is
\begin{equation}
\label{eq:J31 bnd}
\begin{split}
J_{3,1} &\ll \int\limits_{\frac{\pi R}{2}}^{\pi m } \bigg[
{-r_{N}^{M,0}}''(x) -\frac{r_{N}^{M,0}(x)\cdot {r_{N}^{M,0}}'(x)^2
}{1-r_{N}^{M,0}(x)^2} \bigg]^2 dx \ll \int\limits_{\frac{\pi
R}{2}}^{\pi m }\big({r_{N}^{M,0}}''(x)^2 + {r_{N}^{M,0}(x)}^2\big)
dx \\&\ll \|r_{N}^{M,0}- r_{N}\|
_{L^{2}(I_{N})}^{2}+\|{r_{N}^{M,0}}''- r_{N}''\|
_{L^{2}(I_{N})}^{2}+ \int\limits_{\frac{\pi R}{2}}^{\pi m }
\big(r_{N}''(x)^2 + r_{N}(x)^2\big)dx \\&\ll \frac{1}{\sqrt{M}} +
\frac{1}{R},
\end{split}
\end{equation}
by the triangle inequality, Lemma \ref{lem:|rNM-rN|L2->0}, and the
decay
$$r_{N}(x),r_{N}''(x) \ll \frac{1}{x}.$$

To bound $J_{3,2}$, we note that for $\frac{\pi R}{2} < x < \pi m$,
we may expand
\begin{equation*}
\frac{1}{\sqrt{1-{r_{N}^{M,0}}(x)^2}} =
1+O\big({r_{N}^{M,0}}(x)^2\big),
\end{equation*}
with the constant involved in the $'O'$-notation being uniform,
since $r_{N}^{M,0}$ is bounded away from $1$ (by Corollary
\ref{cor:rM->r unif}, say). Thus we may use the triangle inequality
to write
\begin{equation}
\label{eq:J32 bnd trngl}
\begin{split}
J_{3,2} &\ll \int\limits_{\frac{\pi R}{2}}^{\pi m } \bigg|\E |V_{1}|
\cdot \E |V_{2}|- \E |Y_{N}'(0)| \E|{Y_{N}^{M}}'(x)| \bigg| dx +
\int\limits_{\frac{\pi R}{2}}^{\pi m } r_{N}^{M,0}(x)^2 \cdot \E
|V_{1}| \E |V_{2}| dx \\&\le \int\limits_{\frac{\pi R}{2}}^{\pi m }
\E |V_{1}| \big(\big| \E |V_{2}| - \E {|Y_{N}^{M}}'(x)| \big|\big)dx
+ \int\limits_{\frac{\pi R}{2}}^{\pi m } \E {|Y_{N}^{M}}'(x)|
\big(\big|\E |V_{1}| - \E {|Y_{N}}'(0)| \big|\big)dx
\\&+ \int\limits_{\frac{\pi R}{2}}^{\pi m }
r_{N}^{M,0}(x)^2 \cdot \E |V_{1}| \E |V_{2}| dx.
\end{split}
\end{equation}

Now, \eqref{eq:E|V| V mean 0 Gaus} allows us to compute $\E|V_{1}|$,
$\E|V_{2}|$, $\E |Y_{N}'(0)|$ and $\E |{Y_{N}^{M}}'(x)|$;
\eqref{eq:OmegaNM0 cond def} implies that all of the expectations
above are uniformly bounded for $\frac{\pi R}{2} < x < \pi m $. Thus
the third term of \eqref{eq:J32 bnd trngl} is bounded by
\begin{equation*}
\ll \int\limits_{\frac{\pi R}{2}}^{\pi m } r_{N}^{M,0}(x)^2 dx \ll
\frac{1}{R} + \frac{1}{\sqrt{M}},
\end{equation*}
as before. We bound the first summand of \eqref{eq:J32 bnd trngl} as
\begin{equation*}
\begin{split}
&\ll \int\limits_{\frac{\pi R}{2}}^{\pi m } \big| \E |V_{2}| - \E
{|Y_{N}^{M}}'(x)| \big| dx \ll \int\limits_{\frac{\pi R}{2}}^{\pi m
} \bigg[\sqrt{{\lambda_{2}^{M}}'} - \bigg({\lambda_{2}^{M}}'-
\frac{{r_{N}^{M,0}}'(x)^2}{1-r_{N}^{M,0}(x)^2}\bigg)^{1/2}\bigg]dx
\\&\ll \frac{1}{\sqrt{{\lambda_{2}^{M}}'}} \int\limits_{\frac{\pi R}{2}}^{\pi m } \frac{{r_{N}^{M,0}}'(x)^2}{1-r_{N}^{M,0}(x)^2}
\ll \int\limits_{\frac{\pi R}{2}}^{\pi m } {r_{N}^{M,0}}'(x)^2 dx
\ll \frac{1}{\sqrt{M}} + \frac{1}{R},
\end{split}
\end{equation*}
as earlier, since ${\lambda_{2,N}^{M}}'$ is bounded away from $0$,
and $r_{N}^{M,0}$ is bounded away from $1$ on the domain of the
integration. The second summand of \eqref{eq:J32 bnd trngl} is
bounded similarly, resulting in the same bound. Therefore
\begin{equation}
\label{eq:J32 bnd} J_{3,2} \ll \frac{1}{\sqrt{M}} + \frac{1}{R}.
\end{equation}

Recalling \eqref{eq:E32 bnd sumint}, the estimates \eqref{eq:J31
bnd} and \eqref{eq:J32 bnd} imply
\begin{equation*}
\frac{E_{3,2}}{N} \ll \frac{1}{\sqrt{M}} + \frac{1}{R}
\end{equation*}
by \eqref{eq:E32 bnd sumint}. Bounding $E_{3,1}$ and $E_{3,3}$ in a
similar (but easier) way, we get (see \eqref{eq:E3=E31-2E32+E33})
\begin{equation}
\label{eq:E3/N bnd} \frac{E_{3}}{N} \ll \frac{1}{\sqrt{M}} +
\frac{1}{R}
\end{equation}

\subsubsection{Collecting all the estimates}

Collecting the estimates \eqref{eq:E1/N bnd}, \eqref{eq:E2/N bnd}
and \eqref{eq:E3/N bnd}, we see that
\begin{equation*}
\begin{split}
&\frac{\Var(Z_{N}^{+}-Z_{N}^{M,+})}{N} =
\frac{E_{1}}{N}+\frac{E_{2}}{N}+\frac{E_{3}}{N} \\&= O\bigg(\epsilon
S+\frac{S}{\sqrt{M}\epsilon^2}+\frac{1}{S}\bigg)+ O\bigg( \frac{R
S^{2A} T^6}{M^{1/6}} + \frac{RS}{T}\bigg) +O\bigg(\frac{1}{\sqrt{M}}
+ \frac{1}{R}\bigg),
\end{split}
\end{equation*}
which could be made arbitrarily small, upon making an appropriate
choice for the parameters $\epsilon$, $S$, $R$ and $T$. Proposition
\ref{prop:E(Z-Zmol)^2=o(N)} is now proved.

\end{proof}

\appendix

\section{The third moment of $Z^{M}$ on short intervals is bounded}

\begin{proposition}
\label{prop:third mom ZM bnd} Let $L$ be a constant, $K=NL$ and for
$1 \le k \le K$ let $Z_{N,k}^{M}$ be the number of zeros of
$Y_{N}^{M}$ on $[(k-1)\frac{\pi m}{K},k\frac{\pi m}{K}]$. Then for
$L$ sufficiently large, all the third moments $\E (Z_{N,k}^{M})^3$
are uniformly bounded by a constant, independent of $N$ and $k$.
\end{proposition}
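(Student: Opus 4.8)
Our strategy is to bound the third \emph{factorial} moment $\E[Z(Z-1)(Z-2)]$ of $Z:=Z_{N,k}^{M}$ and then invoke the identity $Z^{3}=Z(Z-1)(Z-2)+3Z(Z-1)+Z$. By the stationarity of $Y_{N}^{M}$ we may take $k=1$, so that $Z$ counts the zeros of $Y_{N}^{M}$ on $I:=[0,\tau']$ with $\tau':=\pi m/(NL)\le 3\pi/(2L)$; the hypothesis ``$L$ sufficiently large'' means precisely that we are free to assume $\tau'$ is smaller than any prescribed absolute constant. Two of the three terms are easy: $\E Z=(\tau'/\pi)\sqrt{{\lambda_{2,N}^{M}}'}=O(1)$ by Kac--Rice and ${\lambda_{2,N}^{M}}'=O(1)$, while $\E[Z(Z-1)]=O({\tau'}^{2})=O(1)$ either by the short-interval second factorial moment estimate already used in the proof of Proposition~\ref{prop:CLT for Zmol} (with $\tau'$ in place of $\tau$, the Kac--Rice integrand being bounded near the diagonal) or by the simpler analogue of the argument below. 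It remains to bound $\E[Z(Z-1)(Z-2)]$.

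For this we use the third-order Kac--Rice formula, i.e.\ the extension of Lemma~\ref{lem:CL form zeros} to the third factorial moment, which is legitimate for $Y_{N}^{M}$ by the generalization of Qualls' argument noted in Remark~\ref{rem:CL mom form just} (both sides read in $[0,\infty]$, finiteness of the right-hand side being exactly what we prove):
\begin{equation*}
\E[Z(Z-1)(Z-2)] = \int_{I^{3}} \E\bigl[\, \textstyle\prod_{j=1}^{3}|{Y_{N}^{M}}'(x_{j})| \,\bigm|\, Y_{N}^{M}(x_{1})=Y_{N}^{M}(x_{2})=Y_{N}^{M}(x_{3})=0 \,\bigr]\, p(x_{1},x_{2},x_{3})\, dx,
\end{equation*}
where $p(x_{1},x_{2},x_{3})$ denotes the value at the origin of the joint density of $(Y_{N}^{M}(x_{1}),Y_{N}^{M}(x_{2}),Y_{N}^{M}(x_{3}))$. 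We shall show that the integrand is bounded above by an absolute constant, uniformly in $N$; then $\E[Z(Z-1)(Z-2)]\ll |I|^{3}=O(1)$, as desired. By symmetry we may assume $x_{1}\le x_{2}\le x_{3}$ and write $a:=x_{2}-x_{1}$, $b:=x_{3}-x_{2}$.

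The integrand is a product of a density factor and a conditional-moment factor, which we bound separately. For the density, $p(x_{1},x_{2},x_{3})=\bigl((2\pi)^{3/2}\sqrt{\det\Sigma}\bigr)^{-1}$ with $\Sigma=Cov(Y_{N}^{M}(x_{1}),Y_{N}^{M}(x_{2}),Y_{N}^{M}(x_{3}))$, and the claim is $\det\Sigma\gg a^{2}b^{2}(a+b)^{2}$ uniformly in $N$. Indeed, the linear change of variables from $(Y_{N}^{M}(x_{1}),Y_{N}^{M}(x_{2}),Y_{N}^{M}(x_{3}))$ to the divided differences $\bigl([x_{1}]Y_{N}^{M},\,[x_{1},x_{2}]Y_{N}^{M},\,[x_{1},x_{2},x_{3}]Y_{N}^{M}\bigr)$ has determinant $(ab(a+b))^{-1}$, so $\det\Sigma=a^{2}b^{2}(a+b)^{2}\cdot\det Cov(\text{divided differences})$; by the mean-value form of divided differences and the fact that the derivatives of $r_{N}^{M}$ up to order $4$ are bounded uniformly in $N$ (since $\|r_{N}^{(j)}\|_{\infty}\le 1$ and $\|S_{M}^{(k)}\|_{\infty}=O(M^{-k})$), this last determinant is, once $|I|$ is small, within an error $o(1)$ (as $|I|\to 0$, uniformly in $N$) of $\det Cov\bigl(Y_{N}^{M}(0),{Y_{N}^{M}}'(0),\tfrac12{Y_{N}^{M}}''(0)\bigr)=\tfrac14{\lambda_{2,N}^{M}}'\bigl({r_{N}^{M}}''''(0)-({\lambda_{2,N}^{M}}')^{2}\bigr)$, and this limiting value is bounded below by a positive absolute constant, since ${\lambda_{2,N}^{M}}'\to\tfrac13$ and ${r_{N}^{M}}''''(0)\to\tfrac15$, whence ${r_{N}^{M}}''''(0)-({\lambda_{2,N}^{M}}')^{2}\to\tfrac15-\tfrac19>0$. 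For the conditional moment, Hölder's inequality (exponents $3,3,3$) together with the Gaussianity of the conditional laws (which are centred) reduces matters to $\E[|{Y_{N}^{M}}'(x_{j})|^{3}\mid\cdots]^{1/3}\ll\bigl(\Var({Y_{N}^{M}}'(x_{j})\mid\cdots)\bigr)^{1/2}$, hence to bounding the three conditional variances. Dropping one of the conditioning points only increases the variance, so $\Var({Y_{N}^{M}}'(x_{1})\mid\cdots)\le\Var({Y_{N}^{M}}'(x_{1})\mid Y_{N}^{M}(x_{1})=Y_{N}^{M}(x_{2})=0)=\mu_{N}^{M}(a)$, likewise $\Var({Y_{N}^{M}}'(x_{3})\mid\cdots)\le\mu_{N}^{M}(b)$, and $\Var({Y_{N}^{M}}'(x_{2})\mid\cdots)\le\min\bigl(\mu_{N}^{M}(a),\mu_{N}^{M}(b)\bigr)$, where $\mu_{N}^{M}(u)$ is the conditional variance $\Var({Y_{N}^{M}}'(0)\mid Y_{N}^{M}(0)=Y_{N}^{M}(u)=0)$, given by \eqref{eq:mu def} with $r=r_{N}^{M}$. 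The Taylor expansion of $r_{N}^{M}$ at the origin — of exactly the shape used in the proof of Lemma~\ref{lem:int est t small}, with the mollifier $S_{M}=1+O(u^{2}/M^{2})$ perturbing the coefficients only by $O(1/M^{2})$ — gives $1-r_{N}^{M}(u)^{2}\gg u^{2}$ and ${\lambda_{2,N}^{M}}'(1-r_{N}^{M}(u)^{2})-({r_{N}^{M}}'(u))^{2}\ll u^{4}$ uniformly in $N$, whence $\mu_{N}^{M}(u)\ll u^{2}$. Consequently the conditional moment is $\ll a\cdot b\cdot\min(a,b)\le ab(a+b)$, and multiplying by the density bound $\ll(ab(a+b))^{-1}$ shows the integrand is $O(1)$ uniformly, completing the proof.

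The only genuinely delicate step is the uniform-in-$N$ lower bound $\det\Sigma\gg a^{2}b^{2}(a+b)^{2}$: everything else is either already in the paper or a routine Gaussian estimate, but here one must verify that the nondegeneracy of the triple $(Y_{N}^{M},{Y_{N}^{M}}',{Y_{N}^{M}}'')$ — equivalently the positivity of ${r_{N}^{M}}''''(0)-({\lambda_{2,N}^{M}}')^{2}$ — is both quantitative and stable as $N\to\infty$, which is where the explicit limiting values $\tfrac13$ and $\tfrac15$ of ${\lambda_{2,N}^{M}}'$ and ${r_{N}^{M}}''''(0)$ are used.
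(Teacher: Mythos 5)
Your proof is correct, and it follows a genuinely different — and considerably shorter — route than the paper's. The overall skeleton is the same (reduce to the third factorial moment, apply the third-order Kac--Rice formula, and show the integrand is uniformly bounded by balancing the density factor against the conditional moment factor), but the two hard estimates are obtained by entirely different arguments, in a way that sidesteps most of the paper's Appendix.

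For the density lower bound $\det\Sigma\gg a^{2}b^{2}(a+b)^{2}$, the paper proves exactly this (as Lemma~\ref{lem:detA >> x2y2(y-x)2}) by an explicit, term-by-term Taylor expansion of $r_{N}^{M}$, together with the auxiliary Lemma~\ref{lem:anm, anmM} controlling all the Taylor coefficients uniformly in $N$ and $M$; this is several pages of delicate bookkeeping with polynomials $F_{ij}$, $H_{ij}$, $G_{ij}$. You instead pass to divided differences, factor out the Jacobian $\bigl(ab(a+b)\bigr)^{-1}$, and reduce the problem to nondegeneracy of $\bigl(Y_{N}^{M}(0),{Y_{N}^{M}}'(0),\tfrac12{Y_{N}^{M}}''(0)\bigr)$, which you settle by computing the limits ${\lambda_{2,N}^{M}}'\to\tfrac13$ and ${r_{N}^{M}}''''(0)\to\tfrac15$ and noting $\tfrac15-\tfrac19>0$. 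This is much more conceptual and a fraction of the length. One small wrinkle: "mean-value form of divided differences'' is slightly informal for a covariance statement, since the intermediate points are path-dependent; the cleanest rigorous version is the Hermite--Genocchi integral representation $[x_{1},\ldots,x_{k+1}]Y=\int_{\Delta^{k}}Y^{(k)}(\cdot)$ over the standard simplex, which expresses the covariance of divided differences as an integral of $r_{N}^{M}$-derivatives and makes the $N$-uniform convergence (given $\|(r_{N}^{M})^{(j)}\|_{\infty}=O(1)$ for $j\le 6$) immediate.

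For the conditional moment, the paper computes the conditional variances $\mathcal{V}_{i}=\mathcal{R}_{i}/f$ explicitly and devotes Lemma~\ref{lem:3rd mom kern num} to showing $\mathcal{R}_{1}\ll x^{4}y^{4}(y-x)^{2}$ etc., again by heavy Taylor manipulation. You instead observe that, for Gaussians, conditioning on fewer points can only increase the variance, so each three-point conditional variance is dominated by a two-point one $\mu_{N}^{M}(\cdot)$; the bound $\mu_{N}^{M}(u)\ll u^{2}$ is then exactly the estimate already extracted near the origin in the proof of Lemma~\ref{lem:int est t small} (the exact cancellation of the $u^{2}$ term in ${\lambda_{2,N}^{M}}'(1-r^{2})-(r')^{2}$ survives the mollification since $S_{M}'(0)=0$). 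This eliminates Lemma~\ref{lem:3rd mom kern num} entirely. The switch from the paper's iterated Cauchy--Schwarz (exponents $2,4,4$) to symmetric H\"older (exponents $3,3,3$) is cosmetic, and your retaining the triple integral rather than first quotienting by stationarity is likewise immaterial. In short, both approaches verify the same pointwise bound on the Kac--Rice integrand; yours buys simplicity at the cost of slightly less explicit constants, whereas the paper's explicit Taylor computations also produce the sharp leading coefficients (e.g.\ $f(x,y)\sim x^{2}y^{2}(y-x)^{2}/135$), which are not needed for the Proposition but are of independent interest.
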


\begin{proof}
By stationarity, we may assume that $k=1$. For any $\tau>0$ let
$Z=Z_{N}^{M}(\tau)$ be the number of zeros of $Y=Y_{N}^{M}$ on
$[0,\tau]$. Since $L$ is arbitrarily large, we may reduce the
statement of the present Proposition to bounding $\E
Z_{N}^{M}(\tau)^3$, for $\tau>0$ sufficiently small. It will be
convenient to use the shortcut $r=r_{N}^{M}$.

Using the formula for the high combinatorial moments of the number
of crossings of stationary processes ~\cite{CL} (see remarks
\ref{rem:CL mom form just} and \ref{rem:VarZ=intE|X'(0)X'(t)||0}),
we obtain the bound
\begin{equation*}
\E\big[Z(Z-1)(Z-2)\big] \ll \iint\limits_{[0,\tau]^2} P(x,y)dxdy
\end{equation*}
for the third combinatorial moment (the number of triples) of
$Z(\tau)$, where $P=P_{N}^{M}$ is given by
\begin{equation}
\label{eq:P(x,y) def} P(x,y) = \frac{\E\big[|Y'(0)Y'(x)Y'(y)| \big|
Y(0)=Y(x)=Y(y)=0 \big]}{(2\pi)^{3/2}\sqrt{f(x,y)}}
\end{equation}
with $f(x,y)=f_{N}^{M}(x,y)=\det{A}$ with
\begin{equation*}
A=\left(\begin{matrix} 1 &r(x) &r(y) \\ r(x) &1 &r(y-x)
\\ r(y) &r(y-x) &1\end{matrix}\right).
\end{equation*}

It is easy to compute $f$ explicitly as
\begin{equation}
\label{eq:detA(x,y) expl} f(x,y) =
1-r(x)^2-r(y)^2-r(y-x)^2+2r(x)r(y)r(y-x).
\end{equation}

Since $$\E Z=\frac{\tau}{\pi} \sqrt{-r''(0)} = O(1),$$ and we proved
that the second moments $\E Z^2$ are uniformly bounded, while
proving Proposition \ref{prop:CLT for Zmol} (see condition
\ref{it:Brk cond2} of Theorem \ref{thm:Berk}), it is then sufficient
to prove that the function $P(x,y)$ is uniformly bounded near the
origin. Denote the random vector
$$(V_{1},V_{2},V_{3}) = (Y'(0),Y'(x),Y'(y))$$ conditioned upon
$Y(0)=Y(x)=Y(y)=0$. The random vector $(V_1,V_2,V_3)$ has a mean
zero multivariate Gaussian distribution and we have
\begin{equation}
\label{eq:P(x,y) def exp} P(x,y)=
\frac{\E[|V_{1}V_{2}V_{3}|]}{\sqrt{f(x,y)}}.
\end{equation}
by the definition \eqref{eq:P(x,y) def}.

Applying the Cauchy-Schwarts inequality twice implies the bound
\begin{equation*}
\E[|V_{1}V_{2}V_{3}|] \le (\E V_{1}^2)^{1/2} (\E V_{2}^4)^{1/4} (\E
V_{3}^4)^{1/4}.
\end{equation*}
Let $\mathcal{V}_{i}=\mathcal{V}_{i}(x,y)$ be the variance of
$V_{i}$ for $i=1,2,3$. Computing explicitly, we have
$\mathcal{V}_{i} = \frac{1}{f(x,y)} \mathcal{R}_{i}(x,y)$, where
\begin{equation*}
\begin{split}
\mathcal{R}_{1}(x,y) &:= \lambda_{2}'\det{A} -
r'(x)^2(1-r(y)^2)-r'(y)^2(1-r(x)^2)\\&-2
r'(x)r'(y)(r(x)r(y)-r(y-x)),
\end{split}
\end{equation*}
\begin{equation*}
\begin{split}
\mathcal{R}_{2}(x,y) &:= \lambda_{2}'\det{A}
-r'(x)^2(1-r(y-x)^2)-r'(x-y)^2(1-r(x)^2)\\&-2
r'(x)r'(x-y)(r(x)r(y-x)-r(y)),
\end{split}
\end{equation*}
and
\begin{equation*}
\begin{split}
\mathcal{R}_{3}(x,y) &:= \lambda_{2}'\det{A} -
r'(y)^2(1-r(y-x)^2)-r'(y-x)^2(1-r(y)^2)\\&-2r'(y)r'(y-x)(r(y)r(y-x)-r(x)),
\end{split}
\end{equation*}
where, as usual, we denote $\lambda_{2}' := -r''(0)$.

We then have
\begin{equation}
\label{eq:EV1V2V3/sqrt(f)<<sqrt R1R2R3/f^2}
\frac{\E[|V_{1}V_{2}V_{3}|]}{\sqrt{f(x,y)}} \ll
\frac{\sqrt{\mathcal{R}_{1}\mathcal{R}_{2}\mathcal{R}_{3}
}}{f(x,y)^{2}}.
\end{equation}

The uniform boundedness of $P(x,y)$ around the origin $(x,y)=(0,0)$
then follows from applying Lemmas \ref{lem:detA >> x2y2(y-x)2} and
\ref{lem:3rd mom kern num} on \eqref{eq:EV1V2V3/sqrt(f)<<sqrt
R1R2R3/f^2}, bearing in mind \eqref{eq:P(x,y) def exp}.

\end{proof}

\begin{lemma}
\label{lem:detA >> x2y2(y-x)2} Let $f(x,y)$ be defined by
\eqref{eq:detA(x,y) expl} with $r=r_{N}^{M}$. Then
\begin{equation*}
f(x,y) \gg x^2y^2(y-x)^2
\end{equation*}
uniformly w.r.t. $N$ in some (fixed) neighbourhood of the origin.
\end{lemma}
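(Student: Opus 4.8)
The plan is to read $f(x,y)=\det A$ as the determinant of the covariance matrix of the Gaussian triple $(Y(0),Y(x),Y(y))$, $Y=Y_N^M$, and to pull out the factor $x^2y^2(y-x)^2$ by passing to divided differences. Set
\[
Y[0,x]:=\frac{Y(x)-Y(0)}{x},\qquad Y[0,x,y]:=\frac1y\left(\frac{Y(y)-Y(x)}{y-x}-\frac{Y(x)-Y(0)}{x}\right).
\]
The change of basis from $(Y(0),Y(x),Y(y))$ to $(Y(0),Y[0,x],Y[0,x,y])$ is effected by a lower triangular matrix $T$ with diagonal $\bigl(1,\tfrac1x,\tfrac1{y(y-x)}\bigr)$, so $\det T=\tfrac1{xy(y-x)}$ and hence
\[
f(x,y)=x^2y^2(y-x)^2\,D(x,y),\qquad D(x,y):=\det\operatorname{Cov}\bigl(Y(0),Y[0,x],Y[0,x,y]\bigr).
\]
Since the entries of $\operatorname{Cov}(Y(0),Y[0,x],Y[0,x,y])$ are divided differences of the covariance function $r=r_N^M$ (e.g.\ $\E[Y[0,x]^2]=\tfrac{2(1-r(x))}{x^2}$, $\E[Y(0)Y[0,x]]=\tfrac{r(x)-1}{x}$), which is smooth near the origin, $D$ extends continuously across the diagonals $x=0$, $y=0$, $x=y$ and is everywhere nonnegative. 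Thus it suffices to show $D(x,y)\gg1$ on a fixed neighbourhood of $(0,0)$, uniformly in $N$.

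First I would compute $D(0,0)$. As $(x,y)\to(0,0)$ one has $Y[0,x]\to Y'(0)$ and $Y[0,x,y]\to\tfrac12Y''(0)$, and, writing $\lambda_2':=-r''(0)$ and $\mu_4:=r^{(4)}(0)$, the corresponding covariance entries converge to those of $(Y(0),Y'(0),\tfrac12Y''(0))$, giving
\[
\operatorname{Cov}(0,0)=\begin{pmatrix}1&0&-\tfrac12\lambda_2'\\0&\lambda_2'&0\\-\tfrac12\lambda_2'&0&\tfrac14\mu_4\end{pmatrix},\qquad D(0,0)=\frac{\lambda_2'\,(\mu_4-\lambda_2'^{\,2})}{4}.
\]
Because $r_N^M\to r_\infty(x)=\tfrac{\sin x}{x}$ together with a couple of derivatives, $\lambda_2'={\lambda_{2,N}^{M}}'\to\tfrac13$ and $\mu_4=(r_N^M)^{(4)}(0)\to\tfrac15$ (the second and fourth moments of the spectral measure $\tfrac12\mathbf{1}_{[-1,1]}$ of $r_\infty$; note the spectral measure of $r_N^M$ is nonnegative by $\hat r_N^M\ge0$, so $\mu_4-\lambda_2'^{\,2}\ge0$ always). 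Hence $D(0,0)\to\tfrac13\cdot\tfrac{1/5-1/9}{4}=\tfrac1{135}>0$, and in particular $D(0,0)\ge\tfrac1{200}$ for all large $N$.

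It then remains to upgrade this to a full neighbourhood of $(0,0)$, uniformly in $N$, and this is where the only real care is needed. The key input is that $r_N^M=r_N\cdot S_M$ is uniformly smooth near the origin: since $r_N(x)=\tfrac1N\sum_{n=1}^N\cos\bigl(\tfrac nm x\bigr)$ with $n/m<1$ we get $|r_N^{(k)}(x)|\le1$ for every $k$ and $x$, while $S_M$ is a fixed polynomial of degree $7$ in $|x|/M$ near $0$, so $|S_M^{(k)}(x)|\ll M^{-k}\le1$; hence $|(r_N^M)^{(k)}(x)|\le C_k$ with $C_k$ universal. Consequently the divided-difference covariance entries, and therefore $D(x,y)$, are Lipschitz on a fixed neighbourhood of the origin with a universal constant, so there is a universal $\delta_0>0$ with $D(x,y)\ge\tfrac1{400}$ whenever $|x|,|y|\le\delta_0$ and $N$ is large, which gives $f(x,y)\gg x^2y^2(y-x)^2$ there. (Equivalently, one may Taylor-expand $f=1-r(x)^2-r(y)^2-r(y-x)^2+2r(x)r(y)r(y-x)$ directly in $x,y$; the divided-difference identity above simply records that all terms of degree $<6$ cancel and isolates the leading coefficient $\tfrac14\lambda_2'(\mu_4-\lambda_2'^{\,2})$ of $x^2y^2(y-x)^2$.) The main obstacle is thus not any single computation but keeping every estimate uniform in $N$ (and in $M=M(N)$): this rests entirely on the uniform derivative bounds for $r_N^M$ above, and on identifying the limiting moments so as to see that the leading constant is strictly positive.
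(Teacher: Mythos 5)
Your proof is correct and takes a genuinely different, arguably cleaner, route than the paper's. You exploit the divided-difference factorization: writing $Y=Y_N^M$ and passing from the Gaussian vector $(Y(0),Y(x),Y(y))$ to $(Y(0),Y[0,x],Y[0,x,y])$ by a lower-triangular change of basis with determinant $\tfrac1{xy(y-x)}$, so that $f(x,y)=x^2y^2(y-x)^2\,D(x,y)$ with $D$ the determinant of the covariance of the divided differences. The entries of that covariance matrix are divided differences (of order $\le 2$ in each argument) of $r=r_N^M$ and hence inherit, from the uniform bounds $|(r_N^M)^{(k)}|\ll 1$ for $k\le 6$ that you establish, uniform boundedness and a uniform Lipschitz modulus near the origin; and you identify $D(0,0)=\tfrac14\lambda_2'(\mu_4-\lambda_2'^{\,2})\to\tfrac1{135}$, matching the paper's constant $-12a_2a_4-2/6^3$. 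This reduces the lemma to continuity plus a positive limit at the origin.

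The paper instead sets $\theta=r-1$, Taylor-expands $\theta$ into a power series, and isolates the coefficient of $x^2y^2(y-x)^2$ by a lengthy polynomial bookkeeping (the polynomials $F_{ij}$, $H_{ij}$, $G_{ij}$), together with a dedicated lemma on the Taylor coefficients $a_{n,m}^M$ of $\theta_m^M$. A significant fraction of the paper's effort there is spent handling the odd-degree terms ($|x|^7/M^7$ in $S_M$), which make $F_{ij}$ not obviously divisible by $x^2y^2(y-x)^2$. Your divided-difference argument sidesteps the Taylor gymnastics entirely: the factorization of $x^2y^2(y-x)^2$ is automatic from $\det T$, the smoothness assumption ($C^6$, hence Lipschitz divided differences up to the required order) is exactly what $S_M=\chi_{[-M,M]}^{*8}/(CM^7)$ provides, and the odd/even distinction never arises. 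The tradeoff is that your proof relies on knowing the Hermite--Genocchi integral representation of divided differences (so they extend continuously/Lipschitzly across coincidences, with constants controlled by derivative bounds); if you spell that out, the argument is complete. One minor caveat worth stating explicitly, which applies to both proofs: the lower bound $D(0,0)\ge c>0$ is established only for $N$ sufficiently large (since it proceeds via the limit $D(0,0)\to\tfrac1{135}$ as $N\to\infty$, $M=M(N)\to\infty$), which is all that is actually needed.
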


\begin{proof}

Recall that $r_{N}^{M}(x)=r_{N}(x)\cdot S_{M}(x)$, and we assume
that the neighbourhood is sufficiently small so that $S_{M}$ is
given by a single polynomial \eqref{eq:S_M=1+...} of degree $7$ in
$\frac{|x|}{M}$. We assume with no loss of generality, that $x,y>0$,
and furthermore, that $y>x$. Let
\begin{equation*}
\theta_{m}^{M}(x):=r_{N}^{M}(x)-1,
\end{equation*}
and
\begin{equation*}
\theta_{m}(x):=r_{N}(x)-1.
\end{equation*}
Let also
\begin{equation*}
\theta_{\infty}(x) = \frac{\sin{x}}{x}-1
\end{equation*}
be the limiting function. We will omit the parameters $m$ and $M$,
whenever there is no ambiguity.

We rewrite the definition of $f(x,y)$ as
\begin{equation*}
\begin{split}
f(x,y) &= -\big(\theta(x)^2+\theta(y)^2+\theta(y-x)^2\big) \\&+
2\big(\theta(x)\theta(y)+\theta(y)\theta(y-x)
+\theta(x)\theta(y-x)\big)+2\theta(x)\theta(y)\theta(y-x).
\end{split}
\end{equation*}

It is easy to Taylor expand $\theta=\theta_{m}^{M}(x)$ as
\begin{equation*}
\theta(x) = a_{2,m}^{M}x^2+O(x^4),
\end{equation*}
where the constant in the `$O$'-notation is universal, and
\begin{equation*}
 a_{2,m}^{M} =
a_{2}\bigg(1+O\bigg(\frac{1}{m}+\frac{1}{M^2}\bigg)\bigg),
\end{equation*}
where $a_{2}=-\frac{1}{6}$ is the corresponding Taylor coefficient
of the limiting function $\theta_{\infty}$. We rewrite it as
\begin{equation*}
\theta(x) =
-\frac{1}{6}x^2\bigg(1+O\bigg(x^2+\frac{1}{m}+\frac{1}{M^2}\bigg)\bigg),
\end{equation*}
so that
\begin{equation}
\label{eq:tripple prod}  \theta(x)\theta(y)\theta(y-x) =
-\frac{1}{6^3}
x^2y^2(y-x)^2\bigg(1+O\bigg(x^2+y^2+(y-x)^2+\frac{1}{m}+\frac{1}{M^2}\bigg)\bigg).
\end{equation}

Thus, it remains to estimate
\begin{equation*}
f_{2}(x,y) := 2\big(\theta(x)\theta(y)+\theta(y)\theta(y-x)
+\theta(x)\theta(y-x)\big) -(\theta(x)^2+\theta(y)^2+\theta(y-x)^2).
\end{equation*}
Let
\begin{equation}
\label{eq:theta inf Tay exp} \theta_{\infty}(x) =
\sum\limits_{n=2}^{\infty} a_{n} x^{n},
\end{equation}
where it is easy to compute $a_{n}$ to be
\begin{equation}
\label{eq:an comp} a_{n} = \begin{cases} \frac{(-1)^{n}}{(n+1)!} ,\;
&n \text{ even}\\0 &\text{otherwise}\end{cases}.
\end{equation}
Similarly, we expand $\theta_{m}$ and $\theta_{m}^{M}$ into Taylor
series
\begin{equation*}
\theta_{m}(x) = \sum\limits_{n=2}^{\infty} a_{n,m} x^{n},
\end{equation*}
and
\begin{equation*}
\theta_{m}^{M}(x) = \sum\limits_{n=2}^{\infty} a_{n,m}^{M} x^{n}.
\end{equation*}

We need the following estimates concerning the Taylor coefficients
of $\theta_{m}$ and $\theta_{m}^{M}$.

\begin{lemma}
\label{lem:anm, anmM}
\begin{enumerate}
\item \label{it:anm=an(1+O())}
We have the following estimates for the coefficients of
$\theta_{m}$,
\begin{equation*}
a_{2n,m} =
a_{2n}\bigg(1+O\bigg(\frac{1}{m}+\frac{n^2}{m^2}\bigg)\bigg)
\end{equation*}
for $n \ll m$ and
\begin{equation*}
a_{2n,m},a_{2n} \ll e^{4\pi m} \bigg(\frac{1}{4\pi m}\bigg)^{2n}
n^{O(1)}
\end{equation*}
for $n\gg m$. We have $a_{2n+1,m}=0$ for every $n$.

\item \label{it:anmM=an(1+O())}

We have the following estimates for the coefficients of
$\theta_{m}^{M}$,
\begin{equation*}
\begin{split}
a_{2n,m}^{M} &=
a_{2n}\bigg(1+O\bigg(\frac{1}{m}+\frac{n^2}{m^2}\bigg)\bigg) +
O\bigg(\frac{1}{M^2(2n-5)!}\bigg)
\end{split}
\end{equation*}
\begin{equation}
\label{eq:anmM est n<<m}
a_{2n+1,m}^{M} = \begin{cases} 0, &n\le 2 \\
O\big(\frac{1}{M^7 (2n-6)!} \big), & n\ge 3 \end{cases},
\end{equation}
for $n \ll m$, and
\begin{equation}
\label{eq:anmM est n>>m} a_{n,m}^{M} \ll e^{4\pi m}
\bigg(\frac{1}{4\pi m}\bigg)^{2n} n^{O(1)}
\end{equation}
for $n \gg m$.

\end{enumerate}

\end{lemma}

We postpone the proof of Lemma \ref{lem:anm, anmM} until after the
end of the proof of Lemma \eqref{lem:detA >> x2y2(y-x)2}.

We write
\begin{equation*}
\begin{split}
f_{2,m}^{M}(x,y) &= \sum\limits_{\substack{i,j=2 \\ i,j\ne
3,5}}^{\infty} a_{i,m}^{M} a_{j,m}^{M} \cdot
\big(2(x^{i}y^{j}+y^{i}(y-x)^{j}+x^{i}(y-x)^{j})\\&-(x^{(i+j)}+y^{(i+j)}+(y-x)^{(i+j)})
\big)
\\&= \sum\limits_{\substack{i,j=2 \\ i,j\ne
3,5}}^{\infty} a_{i,m}^{M} a_{j,m}^{M} \cdot
\big(x^{i}y^{j}+x^{j}y^{i}+y^{i}(y-x)^{j}+y^{j}(y-x)^{i}\\&+x^{i}(y-x)^{j}+x^{j}(y-x)^{i}-x^{(i+j)}-y^{(i+j)}-(y-x)^{(i+j)}
\big),
\end{split}
\end{equation*}
adding the summands corresponding to $(i,j)$ and $(j,i)$. We
introduce the polynomials
\begin{equation}
\label{eq:Fij def}
\begin{split}
F_{ij}(x,y) &:=
x^{i}y^{j}+x^{j}y^{i}+y^{i}(y-x)^{j}+y^{j}(y-x)^{i}+x^{i}(y-x)^{j}\\&+x^{j}(y-x)^{i}-x^{(i+j)}-y^{(i+j)}-(y-x)^{(i+j)}\in\Z[x,y],
\end{split}
\end{equation}
so that
\begin{equation*}
f_{2,m}^{M}(x,y) = \sum\limits_{\substack{i,j=2 \\ i,j\ne
3,5}}^{\infty} a_{i,m}^{M}a_{i,m}^{M} F_{i,j}(x,y)
\end{equation*}
and
\begin{equation*}
f_{2,\infty}(x,y) = \sum\limits_{\substack{i,j=2 \\ i,j \text{
even}}}^{\infty} a_{i}a_{j} F_{i,j}(x,y).
\end{equation*}

Note that $F_{2,2} = 0$ and for every even tuple $(i,j)\ne (2,2)$,
\begin{equation*}
x^2 y^2 (y-x)^2  | F_{i,j}(x,y),
\end{equation*}
so that in this case, we may define
\begin{equation*}
H_{i,j}(x,y) = \frac{F_{2i,2j}(x,y)}{x^2 y^2 (y-x)^2} \in \Z[x,y].
\end{equation*}
It is easy to compute $H_{2,4}$ and $H_{4,2}$ to be
\begin{equation*}
H_{2,4}(x,y)=H_{4,2}(x,y) =-6.
\end{equation*}

We claim that for $|x|,|y| \le \tau$, if $\tau$ is sufficiently
small,
\begin{equation}
\label{eq:Hij bnd} |H_{i,j}(x,y)| \ll \max\{2|x|,2|y|\}^{2(i+j-3)}
\end{equation}
To prove our claim, we write
\begin{equation*}
\begin{split}
\frac{F_{2i,2j}(x,y)}{(y-x)^2} &= \big(
y^{2i}(y-x)^{2(j-1)}+y^{2j}(y-x)^{2(i-1)}+x^{2i}(y-x)^{2(j-1)}\\&+x^{2j}(y-x)^{2(i-1)}-(y-x)^{2(i+j-1)}\big)-
\frac{(y^{2j}-x^{2j})}{y-x}\frac{(y^{2i}-x^{2i})}{y-x}.
\end{split}
\end{equation*}
The sum of the coefficients of the homogeneous monomials of the last
polynomial is $\ll 2^{2(i+j-1)}$. Since dividing by $x^2y^2$ does
not increase the coefficients, the $H_{i,j}$ are bounded by
$$\ll 2^{2(i+j-1)}\cdot\max\{|x|,|y|\}^{2(i+j-3)} \ll
\max\{2|x|,2|y|\}^{2(i+j-3)},$$ which is our claim \eqref{eq:Hij
bnd}.

We then have
\begin{equation*}
\frac{f_{2,\infty}(x,y)}{x^2 y^2 (y-x)^2} = -12
a_{2}a_{4}+\sum\limits_{i+j \ge 4} a_{2i}a_{2j} H_{i,j}(x,y),
\end{equation*}
so that on any fixed neighbourhood of the origin,
\begin{equation*}
\begin{split} &\bigg|\frac{f_{2,\infty}(x,y)}{x^2 y^2 (y-x)^2}+12 a_{2}a_{4}
\bigg| \le \sum\limits_{i+j\ge 4} |a_{2i} a_{2j}| |H_{i,j}(x,y)|
\\&\ll (x^2+y^2)\sum\limits_{i+j\ge 4}
\frac{1}{(2i+1)!}\frac{1}{(2j+1)!} 2^{2(i+j-3)}  \ll (x^2+y^2),
\end{split}
\end{equation*}
by \eqref{eq:an comp} and \eqref{eq:Hij bnd}.

Thus to finish the proof of Lemma \ref{lem:detA >> x2y2(y-x)2} it is
sufficient to bound
\begin{equation*}
\bigg|\frac{f_{2,\infty}(x,y)-f_{2,m}^{M}(x,y)}{x^2 y^2 (y-x)^2}
\bigg|.
\end{equation*}

We have
\begin{equation*}
\bigg|\frac{f_{2,\infty}(x,y)-f_{2,m}^{M}(x,y)}{x^2 y^2 (y-x)^2}
\bigg| \le \sum\limits_{i+j\ge 6}|a_{i,m}^{M}a_{j,m}^{M} -
a_{i}a_{j}| \bigg|\frac{F_{i,j}(x,y)}{x^2 y^2 (y-x)^2}\bigg| =
\Sigma^{even}+\Sigma^{odd}+2\Sigma^{mixed},
\end{equation*}
where
\begin{equation*}
\Sigma^{even} := \sum\limits_{i,j \text{ even}};\quad \Sigma^{odd}
:= \sum\limits_{i,j \text{ odd}} ;\quad \Sigma^{mixed} :=
\sum\limits_{i \text{ odd, } j \text{ even}}.
\end{equation*}

We have
\begin{equation*}
\begin{split}
|\Sigma^{even}| &= \sum\limits_{i+j \ge 3 }^{\infty}
|a_{2i,m}^{M}a_{2j,m}^{M} - a_{2i}a_{2j}| |H_{i,j}(x,y)| \\&\ll
\sum\limits_{i,j\ll m} \bigg( (\frac{1}{m} +
\frac{i^2+j^2}{m^2})\frac{1}{(2i+1)!(2j+1)!} + \frac{1}{M^2 \cdot
(2j-5)! (2i-5)!}\bigg) \cdot \tau^{2(i+j-3)} \\&+
\sum\limits_{\substack{ i \ll m\\ j\gg m}} \frac{1}{(2i-5)!} \cdot
\frac{e^{4\pi m}}{(4\pi
m)^{2j}} j^{O(1)} \tau^{2(i+j-3)} \\&+ \sum\limits_{\substack{ i \gg m\\
j\gg m}}  \frac{e^{8\pi m}}{(4\pi m)^{2(i+j)}} (i\cdot j)^{O(1)}
\tau^{2(i+j-3)} \ll \bigg(\frac{1}{m}+\frac{1}{M^2}\bigg),
\end{split}
\end{equation*}
which is sufficient.

The main problem while treating the odd and the mixed terms is that
for such a tuple $(i,j)$, $F_{i,j}(x,y)$ is not necessarily
divisible by $x^2y^2(y-x)^2$. However, in any case, it is divisible
by $x^2(y-x)^2$. We recall then our assumption $y>x$, and thus
\begin{equation*}
\bigg| \frac{F_{i,j}}{x^2y^2 (y-x)^2} \bigg| = \bigg|
\frac{1}{y^2}\cdot\frac{F_{i,j}}{x^2 (y-x)^2} \bigg|.
\end{equation*}
We then define for an odd $i \ge 7$ and any $j$ (i.e. $j\ge 2$ and
$j\ne 3,5$) the polynomial
\begin{equation*}
G_{i,j}(x,y) = \frac{F_{i,j}}{x^2 (y-x)^2}.
\end{equation*}
The polynomial $G_{i,j}$ is a homogeneous polynomial of degree
$i+j-4$ and we have
\begin{equation}
\label{eq:Gij bnd} G_{i,j}(x,y) \ll \max\{2|x|,2|y| \} ^{i+j-4},
\end{equation}
similarly to \eqref{eq:Hij bnd}. Thus, one has
\begin{equation}
\label{eq:Gij/y^2 bnd} \bigg|\frac{1}{y^2} G_{i,j}(x,y) \bigg| \ll
\max\{2|x|,2|y| \} ^{i+j-6}
\end{equation}
(here we use $|x|\le |y|$).

We then write
\begin{equation*}
\Sigma^{odd} = \sum a_{i,m}^{M} a_{j,m}^{M}
\frac{G_{i,j}(x,y)}{y^{2}} \ll \frac{1}{M^{14}}+\frac{1}{m^2},
\end{equation*}
using the same approach as in case of $\Sigma^{even}$, this time
plugging \eqref{eq:Gij/y^2 bnd}. Similarly, one obtains the estimate
\begin{equation*}
\Sigma^{mixed} =  \frac{1}{M^7}+\frac{1}{m^2}.
\end{equation*}

Combining \eqref{eq:tripple prod} with the estimates on
$f_{2,m}^{M}$ shows that
\begin{equation*}
\frac{f(x,y)}{x^2y^2(y-x)^2} =
a+O\bigg(x^2+y^2+\frac{1}{m}+\frac{1}{M^2}\bigg),
\end{equation*}
where $$a=-12a_{2}a_{4}-2/6^3= \frac{1}{135}.$$ This concludes the
proof of Lemma \ref{lem:detA >> x2y2(y-x)2} assuming Lemma
\ref{lem:anm, anmM}.

\end{proof}

\begin{proof}[Proof of Lemma \ref{lem:anm, anmM}]
First, it is clear that part \eqref{it:anmM=an(1+O())} of Lemma
\ref{lem:anm, anmM} follows from part \eqref{it:anm=an(1+O())} by
\eqref{eq:S_M=1+...}, which holds for $x>0$ sufficiently small.

We write
\begin{equation*}
\theta_{m}(x) = \frac{2m}{2m-1} \bigg(\frac{\sin{x}}{x}
h\bigg(\frac{x}{2m}\bigg) - 1\bigg),
\end{equation*}
where $h(x) = \frac{x}{\sin{x}}$. The multiplication by
$\frac{2m}{2m-1}$ poses no problem here. Now, the Taylor expansion
of $h(x)$ is well-known to be
\begin{equation*}
h(x) = \sum\limits_{n\ge 0} (-1)^{n+1} (2^{2n}-2) B_{2n}
\frac{x^{2n}}{(2n)!},
\end{equation*}
where $B_{n}$ are the Bernoulli numbers. Recalling the Taylor
expansion
$$\frac{\sin{x}}{x} = \sum\limits_{j\ge 0} (-1)^{j} \frac{x^{2j}}{(2j+1)!},$$
we obtain, after a little rearranging,
\begin{equation*}
\bigg(\frac{\sin{x}}{x} h\bigg(\frac{x}{2m}\bigg) - 1\bigg) =
-\sum\limits_{k\ge 1} \bigg\{\sum\limits_{\substack{j,n\ge 0 \\
j+n=k}} {2k+1 \choose 2n} (2^{2n}-2) \frac{B_{2n}}{(2m)^{2n}}
\bigg\} \frac{(-1)^k x^{2k}}{(2k+1)!}.
\end{equation*}
Now, $$B_{2n}\sim (-1)^{n-1} \cdot 2 \frac{(2n)!}{(2\pi)^{2n}},$$
and therefore the coefficient of $\frac{(-1)^k x^{2k}}{(2k+1)!}$ is
\begin{equation*}
1+O\bigg(\sum\limits_{n\ge 1} {2k+1 \choose 2n} \frac{(2n)!}{(4\pi
m)^{2n}} \bigg).
\end{equation*}
By comparing consecutive terms in this series, we find that this is
$1+O(k^2/m^2)$ provided that $k \ll m$, and it is
\begin{equation*}
\ll e^{4\pi m} \bigg(\frac{2k+1}{4e\pi m}\bigg) ^{2k} k^{O(1)}
\end{equation*}
if $k\gg m$, using Stirling's formula, since the maximal term occurs
when $$2n\approx 2k+1-4\pi m.$$ Hence we have
\begin{equation*}
a_{2n,m} = a_{2n}
\bigg(1+O\bigg(\frac{1}{m}+\frac{n^2}{m^2}\bigg)\bigg)
\end{equation*}
for $n\ll m$, and
\begin{equation*}
a_{2n,m},a_{2n} \ll e^{4\pi m} \bigg(\frac{1}{4\pi m} \bigg) ^{2n}
n^{O(1)}
\end{equation*}
for $n \gg m$.

\end{proof}

\begin{lemma}
\label{lem:3rd mom kern num} Let
\begin{equation*}
\begin{split}
\mathcal{R}_{1}(x,y) &:= \lambda_{2}' f(x,y) -
r'(x)^2(1-r(y)^2)-r'(y)^2(1-r(x)^2)\\&-2
r'(x)r'(y)(r(x)r(y)-r(y-x)),
\end{split}
\end{equation*}

\begin{equation*}
\begin{split}
\mathcal{R}_{2}(x,y) &:= \lambda_{2}' f(x,y)
-r'(x)^2(1-r(y-x)^2)-r'(x-y)^2(1-r(x)^2)\\&-2
r'(x)r'(x-y)(r(x)r(y-x)-r(y)),
\end{split}
\end{equation*}
and
\begin{equation*}
\begin{split}
\mathcal{R}_{3}(x,y) &:= \lambda_{2}' f(x,y) -
r'(y)^2(1-r(y-x)^2)-r'(y-x)^2(1-r(y)^2)\\&-2r'(y)r'(y-x)(r(y)r(y-x)-r(x)),
\end{split}
\end{equation*}
where $r=r_{N}^{M}$ and $f$ is defined by \eqref{eq:detA(x,y) expl}.
Then
\begin{equation*}
\mathcal{R}_{1}(x,y) = O(x^4y^4 (y-x)^2),
\end{equation*}
\begin{equation*}
\mathcal{R}_{2}(x,y) = O(x^4y^2 (y-x)^4),
\end{equation*}
and
\begin{equation*}
\mathcal{R}_{3}(x,y) = O(x^2y^4 (y-x)^4).
\end{equation*}
\end{lemma}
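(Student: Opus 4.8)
The plan is to reduce each $\mathcal{R}_i$ to an elementary interpolation estimate. By the identity $\mathcal{V}_i(x,y)=\mathcal{R}_i(x,y)/f(x,y)$ established in the proof of Proposition \ref{prop:third mom ZM bnd} — where $\mathcal{V}_1,\mathcal{V}_2,\mathcal{V}_3$ are the conditional variances of $Y'(0),Y'(x),Y'(y)$, with $Y=Y_N^M$, given $Y(0)=Y(x)=Y(y)=0$ — and since the proof of Lemma \ref{lem:detA >> x2y2(y-x)2} in fact shows $f(x,y)\asymp x^2y^2(y-x)^2$ on a fixed neighbourhood of the origin, it suffices to prove, uniformly in $N$ and $M$,
$$\mathcal{V}_1\ll x^2y^2,\qquad \mathcal{V}_2\ll x^2(y-x)^2,\qquad \mathcal{V}_3\ll y^2(y-x)^2 .$$
By stationarity all three are special cases of one statement: $\mathcal{V}_2$ (resp. $\mathcal{V}_3$) is the conditional variance of $Y'(0)$ given that $Y$ vanishes at the nodes $\{-x,0,y-x\}$ (resp. $\{-y,x-y,0\}$), so I describe the argument for $\mathcal{V}_1$.

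Assume without loss of generality $0<x<y$ and let $p$ be the quadratic polynomial interpolating a sample path of $Y$ at the nodes $0,x,y$; then $p'(0)$ is a linear combination of $Y(0),Y(x),Y(y)$ with coefficients depending only on $x,y$. The Newton remainder formula $Y(t)-p(t)=t(t-x)(t-y)\,Y[0,x,y,t]$, together with $t(t-x)(t-y)/t\to(0-x)(0-y)=xy$ and the continuity of $t\mapsto Y[0,x,y,t]$, gives almost surely
$$Y'(0)-p'(0)=xy\,Y[0,0,x,y].$$
Since the conditional variance of $Y'(0)$ given $Y(0),Y(x),Y(y)$ equals $\min_{a,b,c}\Var\big(Y'(0)-aY(0)-bY(x)-cY(y)\big)$, choosing the linear combination to be $p'(0)$ yields $\mathcal{V}_1\le\Var(Y'(0)-p'(0))=x^2y^2\,\Var\big(Y[0,0,x,y]\big)$. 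The same computation at the nodes $\{-x,0,y-x\}$ and $\{-y,x-y,0\}$ produces the prefactors $\big((0+x)(0-(y-x))\big)^2=x^2(y-x)^2$ and $\big((0+y)(0-(x-y))\big)^2=y^2(y-x)^2$, so it remains only to show $\Var(Y[0,0,x,y])=O(1)$ uniformly.

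For this I would use that $r_N^M=r_N\cdot S_M$ is $C^6$ near the origin (because $S_M$ is $6$ times continuously differentiable, cf. \eqref{eq:S_M=1+...}), and that
$$(r_N^M)^{(6)}(0)=\sum_{j=0}^{6}\binom{6}{j}r_N^{(j)}(0)\,S_M^{(6-j)}(0)=O(1)$$
uniformly in $N,M$, since $|r_N^{(j)}(0)|\le\tfrac1N\sum_{n\le N}(n/m)^j\le1$ and $S_M^{(k)}(0)=O(M^{-k})=O(1)$. Hence $Y_N^M$ possesses a mean-square third derivative with $\Var(Y'''(0))=-(r_N^M)^{(6)}(0)=O(1)$ uniformly, so $\Var(Y'''(a))$, and therefore $|\operatorname{Cov}(Y'''(a),Y'''(b))|$ by Cauchy--Schwarz, are $O(1)$ for all $a,b$. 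The Hermite--Genocchi formula represents the divided difference as $Y[0,0,x,y]=\int_{\Delta}Y'''(s_2x+s_3y)\,ds$ over the standard $3$-simplex $\Delta$, whence $\Var(Y[0,0,x,y])\le|\Delta|^2\sup_{a,b}|\operatorname{Cov}(Y'''(a),Y'''(b))|=O(1)$. Combining everything, $\mathcal{R}_1=\mathcal{V}_1 f\ll x^2y^2\cdot x^2y^2(y-x)^2=x^4y^4(y-x)^2$, and likewise for $\mathcal{R}_2$ and $\mathcal{R}_3$. The delicate point — indeed the only place where more than $C^2$-regularity of $r_N^M$ is used — is the uniform bound $\Var(Y'''(0))=O(1)$, which hinges on the explicit polynomial shape of $S_M$ near the origin; the rest is bookkeeping with divided differences together with the already-proved two-sided estimate for $f$.
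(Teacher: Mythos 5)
Your proposal is correct, and it takes a genuinely different and considerably shorter route than the paper's proof. The paper proves the bounds by brute force: it substitutes $\theta = r-1$ into $\mathcal{R}_1$, expands everything into a multiple Taylor series, introduces explicit polynomial ``shape functions'' $A,B,C,D,E,I,J$, verifies one by one that each is divisible by $x^4y^4(y-x)^2$ (or by $x^4(y-x)^2$ for the odd-index terms, using the assumed ordering $x\le y$), bounds the coefficient sums using the decay of $a_{2n}$, and finally transfers the estimate from the scaling limit $r_\infty$ to $r_N^M$ via Lemma \ref{lem:anm, anmM}. You instead interpret $\mathcal{R}_i/f$ probabilistically as a conditional variance, i.e.\ the best-linear-prediction error of $Y'$ given three values of $Y$, and dominate that error by a specific linear predictor --- the derivative of the quadratic interpolant. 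The Newton remainder formula turns the residual into a divided difference with the exact polynomial prefactor $xy$, $x(y-x)$, or $y(y-x)$ that one needs, and the Hermite--Genocchi integral representation reduces the uniform bound on $\Var(Y[0,0,x,y])$ to the single uniform bound $-(r_N^M)^{(6)}(0) = O(1)$, which you obtain from the Leibniz rule, $|r_N^{(j)}(0)|\le 1$, and $S_M^{(k)}(0)=O(M^{-k})$. Everything you need is indeed available: the identity $\mathcal{V}_i=\mathcal{R}_i/f$ is stated in the proof of Proposition \ref{prop:third mom ZM bnd}; the two-sided estimate $f\asymp x^2y^2(y-x)^2$ with uniform constants near the origin is established in the proof of Lemma \ref{lem:detA >> x2y2(y-x)2} (the statement records only $\gg$, but the proof shows $f/(x^2y^2(y-x)^2)\to 1/135$); $S_M$ being $C^6$ makes $r_N^M\in C^6$, hence $Y_N^M$ has a third derivative (in fact the sample paths are a.s.\ $C^3$ since $\hat{r}_N^M(n)\ll n^{-8}$, so Hermite--Genocchi applies pathwise); and I checked the interpolation identity $Y'(0)-p'(0)=xy\,Y[0,0,x,y]$ by direct computation. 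Your approach also makes transparent \emph{why} the prefactors are what they are (products of distances to the other two interpolation nodes), which is obscured in the paper's polynomial bookkeeping. The one place you should spell out a touch more carefully when writing this up is the regularity needed to apply Hermite--Genocchi and the mean-square derivative calculus, since $r_N^M$ is exactly $C^6$ and not $C^7$; but $C^6$ global regularity of the covariance is precisely what's needed for the mean-square third derivative and for the covariance formula $\Cov(Y'''(a),Y'''(b))=-(r_N^M)^{(6)}(a-b)$.
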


\begin{proof}
We prove the first statement only, the other ones being symmetrical.
We will assume that $x,y > 0$ and moreover $y>x$, so that $S_{M}$ is
given by a polynomial \eqref{eq:S_M=1+...} of degree $7$ in
$\frac{x}{M}$. For brevity, we denote $h(x,y) = h_{m}^{M}(x,y) =
\mathcal{R}_{1}(x,y)$. Similarly, we denote $h_{\infty}(x,y)$,
defined the same way as $h_{m}^{M}(x,y)$, where we use
$r(x)=r_{\infty}(x)=\frac{\sin{x}}{x}$ rather than $r_{N}^{M}$. We
rewrite $h(x,y)$ in terms of $\theta:=r-1$ as
\begin{equation*}
\begin{split}
h(x,y) &:= \lambda_{2}' f(x,y) + 2\bigg(\theta'(x)^2
\theta(y)+\theta'(y)^2\theta(x)\\&-\theta'(x)\theta'(y)\big(\theta(x)+
\theta(y)-\theta(y-x)\big)\bigg)
\\&+(\theta(x)\theta'(y)-\theta(y)\theta'(x))^2.
\end{split}
\end{equation*}

Expanding $\theta$ and $\theta'$ into the Taylor series as earlier
and using $\lambda_{2}'=-2a_{2}$, we have
\begin{equation}
\label{eq:V1inf sum pol}
\begin{split}
h_{\infty}(x,y) &= \sum\limits_{i_{1},j_{1},i_{2},j_{2}\in S}
a_{2i_{1}}a_{2j_{1}}a_{2i_{2}}a_{2j_{2}}
A_{2i_{1},2j_{1},2i_{2},2j_{2}}(x,y) \\&+ \sum\limits_{i,j,k\ge 2}
a_{2i}a_{2j}a_{2k} B_{2i,2j,2k}(x,y) + \sum\limits_{i,j \ge 2, k\ge
1} a_{2}a_{2i}a_{2j}a_{2k}C_{2i,2j,2k}(x,y) \\&+ \sum\limits_{i,j
\ge 2} a_{2}^2 a_{2i} a_{2j}D_{2i,2j}(x,y)+ \sum\limits_{i,j\ge 2}
a_{2}a_{2i}a_{2j} E_{2i,2j}(x,y) + \sum\limits_{i\ge 2}
a_{2}^2a_{2i}I_{2i}(x,y)
\\&+\sum\limits_{i\ge 2} a_{2}^3a_{2i} J_{2i}(x,y),
\end{split}
\end{equation}
where
\begin{equation*}
A_{i_{1},j_{1},i_{2},j_{2}}(x,y) = j_{1}j_{2} \cdot
(x^{i_{1}}y^{j_{1}-1}-x^{j_{1}-1}y^{i_{1}}) \cdot
(x^{i_{2}}y^{j_{2}-1}-x^{j_{2}-1}y^{i_{2}}),
\end{equation*}
\begin{equation*}
B_{i,j,k}(x,y) = 2ij \big( x^{i+j-2}y^{k}+y^{i+j-2}x^{k}
-x^{i-1}y^{j-1}(x^{k}+y^{k}-(y-x)^{k})\big),
\end{equation*}
\begin{equation*}
C_{i,j,k} = -4x^{i}y^{j}(y-x)^{k},
\end{equation*}
\begin{equation*}
\begin{split}
D_{i,j}(x,y) &= -4(y-x)^{j}(x^2y^{i}+x^{i}y^2) +4(yx^{i}-xy^{i})
(yx^{j}-xy^{j})\\&+4j(x^{i}y-xy^{i})(x^{2}y^{j-1}-x^{j-1}y^2),
\end{split}
\end{equation*}
\begin{equation*}
\begin{split}
E_{i,j}(x,y) &=
-2F_{i,j}+4i\big(2(x^{i}y^{j}+x^{j}y^{i})-(x^{i-1}y+y^{i-1}x)(x^{j}+y^{j}-(y-x)^{j})
\big) \\&+ 2ij\big(x^{i+j-2}y^2+y^{i+j-2}x^2-2x^{i}y^{j} \big),
\end{split}
\end{equation*}
(here the polynomial $F_{i,j}$ is defined as in \eqref{eq:Fij def}),
\begin{equation*}
I_{i}(x,y) =
-4F_{i,2}+8(x^2y^{i}+x^{i}y^{2}-xy(x^{i}+y^{i}-(y-x)^{i})) =0,
\end{equation*}
so that we may disregard $I_{i}$ altogether,
\begin{equation*}
\begin{split}
J_{i}(x,y) &=
-4x^{2}y^{2}(y-x)^{i}-4(y-x)^{2}(x^{2}y^{i}+x^{i}y^{2})\\&+8xy(yx^{i}-xy^{i})(x-y)+
4i(x-y)(x^{3}y^{i}-y^{3}x^{i}),
\end{split}
\end{equation*}
and
\begin{equation*}
S = \Z^{4} \setminus \bigg(\big(\{(i_{1},1)\} \times \{(i_{2},1) \}
\big)\cup \big(\{(i_{1},1)\}\times \{(1,j_{2}) \} \big) \cup
\big(\{(1,j_{1})\}\times \{ (i_{2},1) \} \big)\bigg).
\end{equation*}
From all the above, it is easy to check that for all the (even)
indexes within the frame, $A,C,D$ and $J$ are divisible by
$P(x,y):=x^4y^4(y-x)^2$, and, moreover,
$B_{i,j,k}(x,y)+B_{j,i,k}(x,y)$ and $E_{i,j}(x,y)+E_{j,i}(x,y)$ are
divisible by $P(x,y)$ (in particular, $B_{i,i,k}$ and $E_{i,i}$
are). It follows then that all the polynomials above vanish, unless
their degree is $\ge 10$. In addition, we have the following
estimates, which follow from the same reasoning as while proving
\eqref{eq:Gij bnd}:
\begin{equation*}
\begin{split}
&\bigg|\frac{A_{i_{1},j_{1},i_{2},j_{2}}(x,y)}{x^4y^4(y-x)^2}\bigg|,
\bigg|\frac{B_{i,j,k}(x,y)}{x^4y^4(y-x)^2}\bigg|,
\bigg|\frac{C_{i,j,k}}{x^4y^4(y-x)^2}\bigg|,
\bigg|\frac{D_{i,j}(x,y)}{x^4y^4(y-x)^2}\bigg|,
\\&\bigg|\frac{(E_{i,j}+E_{j,i})(x,y)}{x^4y^4(y-x)^2}\bigg|,
\bigg|\frac{J_{i}(x,y)}{x^4y^4(y-x)^2}\bigg| \ll
\max\{2^{1+\epsilon}|x|,2^{1+\epsilon}|y|\}^{d},
\end{split}
\end{equation*}
(say), for some $\epsilon >0$, where $d$ is the degree of the
corresponding polynomial on the LHS.

Thus, we have
\begin{equation}
\label{eq:h sim x4y4y-x2}
\bigg|\frac{h_{\infty}(x,y)}{x^4y^4(y-x)^2} -c\bigg| = O(x^2+y^2),
\end{equation}
by the rapid decay \eqref{eq:an comp} of the Taylor coefficients of
$\theta_{\infty}$. The constant $c$ may be computed explicitly to be
$$c=\frac{1}{212625}.$$

To conclude the proof of Lemma \ref{lem:3rd mom kern num} in this
case, we need to bound
$$\bigg|\frac{h_{\infty}(x,y) - h_{m}^{M}(x,y)}{x^4y^4(y-x)^2}
\bigg|.$$ Similarly to \eqref{eq:V1inf sum pol}, we have
\begin{equation*}
\begin{split}
h_{m}^{M}(x,y) &= \sum\limits_{i_{1},j_{1},i_{2},j_{2}\in S'}{}'
a_{i_{1},m}^{M}a_{j_{1},m}^{M}a_{i_{2},m}^{M}a_{j_{2},m}^{M}
A_{i_{1},j_{1},i_{2},j_{2}}(x,y) \\&+ \sum\limits_{i,j,k\ge 4}{}'
a_{i,m}^{M}a_{j,m}^{M}a_{k,m}^{M} B_{i,j,k}(x,y) + \sum\limits_{i,j
\ge 4, k\ge 2}{}'
a_{2,m}^{M}a_{i,m}^{M}a_{j,m}^{M}a_{k,m}^{M}C_{i,j,k}(x,y) \\&+
\sum\limits_{i,j \ge 4}{}' {(a_{2,m}^{M})}^2 a_{i,m}^{M}
a_{j,m}^{M}D_{i,j}(x,y)+ \sum\limits_{i,j\ge 4}{}' a_{2,m}^{M}
a_{i,m}^{M}a_{j,m}^{M} E_{i,j}(x,y) \\&+ \sum\limits_{i\ge 4}{}'
{(a_{2,m}^{M})}^3a_{i,m}^{M} J_{i}(x,y),
\end{split}
\end{equation*}
where the $'$ in the $\sum'$ stands for the fact that all of the
indexes involved in the summations above are $\ne 3,5$, and
\begin{equation*}
S = \Z^{4} \setminus \bigg(\{(i_{1},2)\} \times \{(i_{2},2) \}  \cup
\{(i_{1},2)\}\times \{(2,j_{2}) \} \cup \{(2,j_{1})\times \{
(i_{2},2) \}\bigg).
\end{equation*}

As in the proof of Lemma \ref{lem:detA >> x2y2(y-x)2}, the tricky
part here is that for odd indexes, the polynomials are no longer
divisible by $P(x,y)=x^4y^4(y-x)^2$. However, we notice that, by our
assumption $y\ge x$, it is sufficient that they are divisible by
$x^4(y-x)^2$, and their degree is $\ge 10$ (see the end of the proof
of Lemma \ref{lem:detA >> x2y2(y-x)2}). To check this, we note that
there is certainly no problem with
$A_{i_{1},j_{1},i_{2},j_{2}}(x,y)$, $(B_{i,j,k}+B_{j,i,k})(x,y)$ and
$C_{i,j,k}(x,y)$. Moreover, one can easily check that $D_{i,j}$ and
$E_{i,j}+E_{j,i}$ are divisible by $x^4(y-x)^2$ for any $i,j\ge 2$,
and for any $i\ge 5$, $J_{i}$ is divisible by $x^4y^2(y-x)^2$.
Finally, we check that all the relevant polynomials have degree $\ge
10$.

This, together with the rapid decay \eqref{eq:anmM est n<<m} for
$n\ll m$ and $\eqref{eq:anmM est n>>m}$ for $n\gg m$, of the Taylor
coefficients, imply that
\begin{equation*}
\bigg|\frac{h_{\infty}(x,y)-h_{m}^{M}(x,y)}{x^4y^4(y-x)^2} \bigg|
\ll \frac{1}{m}+\frac{1}{M^2}.
\end{equation*}
Combining the last estimate with \eqref{eq:h sim x4y4y-x2} concludes
the proof of the present Lemma.

\end{proof}

\end{document}